\def\d{\delta} 
\def\e{{\epsilon}}
\def\norm#1{\|#1\|}
\begin{document}

\title{
       iHDG: An Iterative HDG Framework for Partial
       Differential Equations.\thanks{This research was
    partially supported by DOE grants DE-SC0010518 and
    DE-SC0011118, NSF Grants NSF-DMS1620352 and RNMS (Ki-Net) 1107444, ERC Advanced Grant FP7-246775 NUMERIWAVES and ERC Advanced Grant DYCON : Dynamic Control. We are grateful for the supports.}}

    \author{Sriramkrishnan Muralikrishnan\thanks{Department of Aerospace Engineering and Engineering Mechanics, The University of Texas at Austin, Austin, TX 78712, USA.} \and Minh-Binh Tran\thanks{Department of Mathematics, University of Wisconsin, Madison, WI 53706, USA.} \and Tan Bui-Thanh\thanks{Department of Aerospace Engineering and Engineering Mechanics, and the Institute for Computational Engineering and Sciences, The University of Texas at Austin, Austin, TX 78712, USA.} }

\bibliographystyle{siam}
\newcommand{\TODO}[1]{ \fbox{\parbox{3in}{\bf TODO: #1}}}

\newcommand{\grbf}[1] {\mbox{\boldmath${#1}$\unboldmath}}
\newcommand{\gbf}[1] {\mathbf{#1}}

\newcommand{\beq} {\begin{equation}}
\newcommand{\eeq} {\end{equation}}
\newcommand{\bdm} {\begin{displaymath}}
\newcommand{\edm} {\end{displaymath}}
\newcommand{\bit}{\begin{itemize}}
\newcommand{\eit}{\end{itemize}}
\newcommand{\bde}{\begin{description}}
\newcommand{\ede}{\end{description}}
\newcommand{\bce}{\begin{center}}
\newcommand{\ece}{\end{center}}
\newcommand{\ben} {\begin{enumerate}}
\newcommand{\een} {\end{enumerate}}
\newcommand{\bea} {\begin{eqnarray}}
\newcommand{\eea} {\end{eqnarray}}
\newcommand{\barr} {\begin{array}}
\newcommand{\earr} {\end{array}}
\newcommand{\bean} {\begin{eqnarray*}}
\newcommand{\eean} {\end{eqnarray*}}
\newcommand{\edoc} {

\maketitle

\begin{abstract}
We present a scalable iterative solver for high-order hybridized
discontinuous Galerkin (HDG) discretizations of linear partial
differential equations. It is an interplay between domain
decomposition methods and HDG discretizations, and hence inheriting
advances from both sides. In particular, the method can be viewed as a
Gauss-Seidel approach that requires only independent
element-by-element and face-by-face local solves in each iteration. As
such, it is well-suited for current and future computing systems with
massive concurrencies.  Unlike conventional Gauss-Seidel schemes which
are purely algebraic, the convergence of iHDG, thanks to the built-in
HDG numerical flux, does not depend on the ordering of unknowns.  We
rigorously show the convergence of the proposed method for the
transport equation, the linearized shallow water equation and the
convection-diffusion equation. For the transport equation, the method
is convergent regardless of mesh size $h$ and solution order $p$, and
furthermore the convergence rate is independent of the solution
order. For the linearized shallow water and the convection-diffusion
equations we show that the convergence is conditional on both $h$ and
$p$. Extensive steady and time-dependent numerical results for the 2D and
3D transport equations, the linearized shallow water equation, and
the convection-diffusion equation are presented to verify the theoretical
findings.

\end{abstract}

\begin{keywords}
Iterative solvers; Schwarz methods; Hybridized Discontinuous Galerkin methods;
 Transport equation; shallow water equation; convection-diffusion equation; Gauss-Seidel methods
\end{keywords}

\begin{AMS}
65N30,  
65N55,  
65N22,  
65N12,  
65F10  
\end{AMS}

\pagestyle{myheadings} \thispagestyle{plain}
\markboth{S. MURALIKRISHNAN, M-B. TRAN, AND T. BUI-THANH}{AN ITERATIVE HDG FRAMEWORK}

\section{Introduction}

The discontinuous Galerkin (DG) method was originally developed by
Reed and Hill \cite{ReedHill73} for the neutron transport equation,
first analyzed in \cite{LeSaintRaviart74, JohnsonPitkaranta86}, and
then has been extended to other problems governed by partial
differential equations (PDEs) \cite{CockburnKarniadakisShu00}. Roughly
speaking, DG combines advantages of classical finite volume and finite
element methods. In particular, it has the ability to treat solutions
with large gradients including shocks, it provides the flexibility to
deal with complex geometries, and it is highly parallelizable due to
its compact stencil.
However, for steady state problems or
time-dependent ones that require implicit time-integrators, DG methods
typically have many more (coupled) unknowns compared to the other
existing numerical methods, and hence more expensive in
general.

In order to mitigate the computational expense associated with DG
methods, Cockburn, coauthors, and others have introduced hybridizable
(also known as hybridized) discontinuous Galerkin (HDG) methods for
various types of PDEs including Poisson-type equation
\cite{CockburnGopalakrishnanLazarov09, CockburnGopalakrishnanSayas10,
  KirbySherwinCockburn12, NguyenPeraireCockburn09a,
  CockburnDongGuzmanEtAl09, EggerSchoberl10}, Stokes equation
\cite{CockburnGopalakrishnan09, NguyenPeraireCockburn10}, Euler and
Navier-Stokes equations, wave equations \cite{NguyenPeraireCockburn11,
  MoroNguyenPeraire11, NguyenPeraireCockburn11b,
  LiLanteriPerrrussel13, NguyenPeraireCockburn11a, GriesmaierMonk11,
  CuiZhang14}, to name a few. The upwind HDG framework proposed in
\cite{Bui-Thanh15, Bui-Thanh15a, bui2016construction} provides a unified and
a systematic construction of HDG methods for a large class of PDEs. In
HDG discretizations, the coupled unknowns are single-valued traces
introduced on the mesh skeleton, i.e. the faces, and the resulting
matrix is substantially smaller and sparser compared to standard DG
approaches. Once they are solved for, the usual DG unknowns can be
recovered in an element-by-element fashion, completely independent of
each other.  Nevertheless, the trace system is still a bottleneck for
practically large-scale applications, where complex and high-fidelity
simulations involving features with a large range of spatial and
temporal scales are necessary.

Meanwhile, Schwarz-type domain decomposition methods (DDMs) have been
introduced as procedures to parallelize and solve partial differential
equations efficiently, in which each iteration involves the solutions
of the original equation on smaller subdomains
\cite{Lions:1987:OSA,Lions:1989:OSA,Lions:1990:OSA}. Schwarz waveform relaxation methods and optimized Schwarz
methods
\cite{Halpern:OSM:2009,BennequinGanderHalpern:AHB:2009,HalpernSzeftel:2009:NOS,GanderGouarinHalpern:2011:OSW,GanderHajian:2015:ASM,Binh1,Binh2}
have attracted substantial attention over the past decades since they
can be adapted to the underlying physics, and thus lead to efficient
parallel solvers for challenging problems. We view the iterative HDG (iHDG) method as
an extreme DDM approach in which each subdomain is an element. For
current DDM methods, decomposing the computational domain into smaller
subdomains encounters difficulties {when the subdomains have
  cross-points \cite{GanderKwok:2012:BRP} or irregular shapes
  \cite{GanderHalpern:2013:NSR}}; moreover, the geometry of the
decomposition also has a profound influence on the method
\cite{Gander:2011:OTI}. We design the iHDG method to have a two-tier
parallelism to adapt to current and future computing systems: a
coarse-grained parallelism on subdomains, and a fine-grained
parallelism on elements level within each subdomain. Unlike
existing approaches, our method does not rely on any specific
partition of the computational domain,
 and hence is
independent of the geometry of the decomposition.

While either HDG community or DDM community can contribute
individually towards advancing its own field, the potential for
breakthroughs may lie in bringing together the advances from both
sides and in exploiting opportunities at their interfaces. In \cite{GanderHajian:2015:ASM}, Schwarz methods for the
hybridizable interior penalty method have been introduced for
elliptic PDEs in the second order form. The methods are proposed for
two or multi sub-domains, entirely at an algebraic level and correspond to the class of block Jacobi 
methods on the trace system.   

In this paper, we blend the HDG method and Schwarz idea to produce
efficient and scalable iterative solvers for HDG discretizations. One
of the main features of the proposed approach is that it is provably
convergent. From a linear algebra point of view, the method can be
understood as a block Gauss-Seidel iterative solver for the augmented
HDG system with volume and trace unknowns.  Usually the HDG system is
not realized from this point of view and the linear system is
assembled for trace unknowns only. But for iHDG, since we never form
any global matrices it allows us to create an efficient solver which
completely depends on independent element-by-element calculations.
Traditional Gauss-Seidel schemes for convection-diffusion problems or
pure advection problems require the unknowns to be ordered in the flow
direction for convergence. Several ordering schemes for these kind of
problems have been developed and studied in
\cite{bank1981comparison,hackbusch1997downwind,kim2004uniformly,wang1999crosswind}.
In the context of discontinuous Galerkin methods robust Gauss-Seidel
smoothers are developed in \cite{kanschat2008robust} and again these
smoothers depend on the ordering of the unknowns. For
a complex velocity field (e.g. hyperbolic systems) it is, however, not trivial
to obtain a mesh and an ordering which coincide with the flow
direction. Moreover the point or the block Gauss-Seidel scheme (for the trace system alone) 
requires a lot of communication between processors for calculations within an
iteration. These aspects affect the scalabilty of these schemes to a
large extent and in general are not favorable for parallelization
\cite{atkins2005numerical,fidkowski2014algebraic}.

Unlike traditional Gauss-Seidel methods, which are purely algebraic,
the iHDG approach is built upon, and hence exploiting, the HDG
discretization. Of importance is the upwind flux, or more specifically
the upwind stabilization, that automatically determines the flow
directions. 
Consequently {\em the
  convergence of iHDG is independent of the ordering of the unknowns}.
Another crucial property inheritted from HDG is that each iteration
consists of only independent element-by-element local solves to
compute the volume unknowns.  Thanks to the compact stencil of HDG,
this is overlapped by a single communication of the trace of the
volume unknowns restricted on faces shared between the neighboring
processors. 
The communication requirement is thus similar to that of block Jacobi
methods (for the volume system alone). The iHDG approach is designed
with these properties to suit the current and future computing systems
with massive concurrencies.  We rigorously show that our proposed
methods are convergent with explicit contraction constants using an
energy approach. Furthermore the convergence rate is independent of
the solution order for hyperbolic PDEs. The theoretical findings will
be verified on various 2D and 3D numerical results for steady and
time-dependent problems.

The structure of this paper is organized as follows. Section \secref{iHDG}
introduces the iHDG algorithm for an abstract system of PDEs
discretized by the upwind HDG discretization \cite{Bui-Thanh15}. The
convergence of the iHDG algorithm for the scalar and for the system of hyperbolic
PDEs is proved in section \secref{iHDG-hyperbolic} using an energy
approach.  In section \secref{iHDG-convection-diffusion} the
convection-diffusion PDE is considered in the first order
form and the conditions for the convergence of the iHDG algorithm are
stated and proved. Section \secref{numerics} presents various steady
and time dependent examples, in both two and three spatial dimensions,
to support the theoretical findings. We finally conclude the paper in
section \secref{conclusions} and discuss future research directions.

\section{The idea of iHDG}
\seclab{iHDG} 
In this section we first briefly review the upwind HDG framework for a general system of linear PDEs, and then present the main idea behind the iHDG approach. To begin, let us consider the following system
\begin{equation}
\eqnlab{LPDE}
\sum_{k=1}^{\d}\partial_k\F_k\LRp{\ub} + \C\ub :=
\sum_{k=1}^{\d}\partial_k \LRp{\A_k\ub} + \C \ub = \fb, \quad \text{ in } \Omega,
\end{equation}
where $\d$ is the spatial dimension (which, for the clarity of the
exposition, is assumed to be $\d = 3$ whenever a particular value of
the dimension is of concern, but the result is also valid for $d =
\LRc{1,2}$), $\F_k$ the $k$th component of the flux vector (or tensor)
$\F$, $\ub$ the unknown solution with values in $\R^m$, and $\fb$ the
forcing term. The matrices $\A_k$ and $\C$ are assumed to be
continuous\footnote{This assumption is not a limitation but for the
  simplicity of the exposition.} across $\Omega$. The notation
$\partial_k$ stands for the $k$-th partial derivative and by
the subscript $k$ we denote the $k$th component of a vector/tensor. We will discretize
\eqnref{LPDE} using the HDG framework. To that end, let us introduce
notations and conventions used in the paper.

Let us partition $\Omega \in \R^\d$, an  open and bounded domain,  into
$\Nel$ non-overlapping elements $\Kj, j = 1, \hdots, \Nel$ with
Lipschitz boundaries such that $\Omega_h := \cup_{j=1}^\Nel \Kj$ and
$\overline{\Omega} = \overline{\Omega}_h$. Here, $h$ is defined as $h
:= \max_{j\in \LRc{1,\hdots,\Nel}}diam\LRp{\Kj}$. We denote the
skeleton of the mesh by $\Gh := \cup_{j=1}^\Nel \partial K_j$,
the set of all (uniquely defined) faces $\e$. We conventionally identify $\nm$ as the outward 
normal vector on the boundary $\pK$ of element $\K$ (also denoted as $\Km$) and $\np = -\nm$ as the outward normal vector of the boundary of a neighboring element (also denoted as $\Kp$). Furthermore, we use $\n$ to denote either $\nm$ or $\np$ in an expression that is valid for both cases, and this convention is also used for other quantities (restricted) on
a face $\e \in \Gh$.
   For the sake of convenience, we denote by $\Ghb$ the sets of all boundary faces on $\pOmega$, by $\Gho := \Gh \setminus \Ghb$ the set of all interior faces, and $\pOmega_h := \LRc{\pK:\K \in \Omega_h}$.

For simplicity in writing we define $\LRp{\cdot,\cdot}_\K$ as the
$L^2$-inner product on a domain $\K \in \R^\d$ and
$\LRa{\cdot,\cdot}_\K$ as the $L^2$-inner product on a domain $\K$ if
$\K \in \R^{\d-1}$. We shall use $\nor{\cdot}_{\K} :=
\nor{\cdot}_{\Ltw}$ as the induced norm for both cases and the
particular value of $\K$ in a context will indicate which inner
product the norm is coming from. We also denote the $\veps$-weighted
norm of a function $\u$ as $\nor{\u}_{\veps, \K} :=
\nor{\sqrt{\veps}\u}_{\K}$ for any positive $\veps$.  We shall use
boldface lowercase letters for vector-valued functions and in that
case the inner product is defined as $\LRp{\ub,\vb}_\K :=
\sum_{i=1}^m\LRp{\ub_i,\vb_i}_\K$, and similarly $\LRa{\ub,\vb}_\K :=
\sum_{i = 1}^m\LRa{\ub_i,\vb_i}_\K$, where $\m$ is the number of
components ($\ub_i, i=1,\hdots,\m$) of $\ub$.  Moreover, we define
$\LRp{\ub,\vb}_\Omega := \sum_{\K\in \Omega_h}\LRp{\ub,\vb}_\K$ and
$\LRa{\ub,\vb}_\Gh := \sum_{\e\in \Gh}\LRa{\ub,\vb}_\e$ whose
induced (weighted) norms are clear, and hence their definitions are
omitted. We  employ boldface uppercase letters, e.g. $\mb{L}$, to
denote matrices and tensors.
We conventionally use $\ub$ ($\vb$ and $\ubh)$ for
the numerical solution and $\ub^e$ for the exact solution.

We define $\Poly^\p\LRp{\K}$ as the space of polynomials of degree at
most $\p$ on a domain $\K$. Next, we introduce two discontinuous
piecewise polynomial spaces
\begin{align*}
\Vbh\LRp{\Omega_h} &:= \LRc{\vb \in \LRs{L^2\LRp{\Omega_h}}^m:
  \eval{\vb}_{\K} \in \LRs{\Poly^\p\LRp{\K}}^m, \forall \K \in \Omega_h}, \\
\Lambh\LRp{\Gh} &:= \LRc{\lambdab \in \LRs{\Lte}^m:
  \eval{\lambdab}_{\e} \in \LRs{\Poly^\p\LRp{\e}}^m, \forall \e \in \Gh},
\end{align*}
and similar spaces for $\VbhK$ and $\Lambhe$ by replacing $\Omega_h$ with
$\K$ and $\Gh$ with $\e$. For scalar-valued functions,
we denote the corresponding spaces as
\begin{align*}
\Vh\LRp{\Omega_h} &:= \LRc{\v \in L^2\LRp{\Omega_h}:
  \eval{\v}_{\K} \in \Poly^\p\LRp{\K}, \forall \K \in \Omega_h}, \\
\Lamh\LRp{\Gh} &:= \LRc{\lambda \in \Lte:
  \eval{\lambda}_{\e} \in \Poly^\p\LRp{\e}, \forall \e \in \Gh}.
\end{align*}

Following \cite{Bui-Thanh15}, we introduce an upwind HDG discretization for \eqnref{LPDE} as:
for each element $\K$, the
DG local unknown $\ub$ and the extra ``trace'' unknown $\ubh$ need to
satisfy
\begin{subequations}
\eqnlab{KHDG}
\begin{align}
\eqnlab{HDGlocal}
&-\LRp{\F\LRp{\ub}, \Grad\vb}_\K + \LRa{\Fh\LRp{\ub,\ubh}\cdot
\n,\vb}_\pK +\LRp{\C\ub,\vb}_\K = \LRp{\fb,\vb}_\K, \quad \forall \vb \in \VbhK, \\
\eqnlab{HDGeqn}
&\LRa{\jump{\Fh\LRp{\ub,\ubh} \cdot \n},\mub}_\e = \mb{0}, \quad \forall \e \in \Gho, \quad \forall \mub \in \Lambh\LRp{\e}, 
\end{align}
\end{subequations}
 where we have defined the ``jump'' operator for any quantity
 $\LRp{\cdot}$ as $\jump{\LRp{\cdot}} := \LRp{\cdot}^- +
 \LRp{\cdot}^+$. We also define the ``average'' operator
 $\average{\LRp{\cdot}}$ via $2\average{\LRp{\cdot}} :=
 \jump{\LRp{\cdot}}$.  The HDG flux is defined by
\begin{equation}
\eqnlab{RiemannFluxBoth}
\Fh\cdot \n = \F\LRp{\ub}\cdot \n + \Aa \LRp{\ub - \ubh},
\end{equation}
with\footnote{We assume that $\A$ admits an eigen-decomposition, and
  this is valid for a large class of PDEs of Friedrichs' type, for
  example.} the matrix $\A := \sum_{k = 1}^\d{\A_k\n_k} = \Rb \S \Rb^{-1}$, and
  $\Aa := \Rb \snor{\S}\Rb^{-1}$. Here $\n_k$ is the $k$th component of the outward normal vector 
  $\n$ and $\snor{\S}$ represents a matrix obtained by taking the absolute 
  value of the main diagonal of the matrix $\S$. 
  
  The key idea behind the iHDG approach is the following.
The approximation of the HDG solution 
 at the $\LRp{k+1}$th iteration is governed by the local equation \eqnref{HDGlocal} as
\begin{subequations}
\eqnlab{iHDG}
\begin{align}
\eqnlab{iHDGlocal}
&-\LRp{\F\LRp{\ubkp}, \Grad\vb}_\K + \LRa{\F\LRp{\ubkp}\cdot \n + \Aa\ubkp - \Aa\ubhk,\vb}_\pK + \LRp{\C\ub,\vb}_\K = \LRp{\fb,\vb}_\K,
\intertext{where the weighted trace $\Aa\ubhk$ (not the trace itself) is computed using information at the $k$-iteration via the conservation condition \eqnref{HDGeqn}, i.e.,}
\eqnlab{iHDGtrace}
&\LRa{\Aa\ubhk,\mub}_\pK = \LRa{\average{\Aa\ubk} + \average{\F\LRp{\ubk}\cdot \n}, \mub}_\pK.
\end{align}
\end{subequations}
Algorithm \ref{al:DDMhyperbolic} summarizes the iHDG approach.
\begin{algorithm}
  \begin{algorithmic}[1]
    \ENSURE Given initial guess $\ub^0$, compute the weighted
    trace $\Aa\ubh^0$ using \eqnref{iHDGtrace}.
      \WHILE{not converged} 
      \STATE Solve the local equation \eqnref{iHDGlocal} for $\ubkp$ using the weighted trace $\Aa\ubhk$.
      \STATE Compute $\Aa\ubhkp$ using \eqnref{iHDGtrace}.
      \STATE Check convergence. If yes, {\bf exit}, otherwise {\bf set} $k = k+ 1$ and {\bf continue}.
    \ENDWHILE
  \end{algorithmic}
  \caption{The iHDG approach.}
  \label{al:DDMhyperbolic}
\end{algorithm}

The appealing feature of iHDG algorithm \ref{al:DDMhyperbolic} is that
each iteration requires only {\em independent local solve
\eqnref{iHDGlocal} element-by-element, completely independent of each
other}. The method exploits the structure of HDG in which each local
solve is well-defined as long as the trace $\ubhk$ is
given. Furthermore, the global solve via the conservation condition
\eqnref{HDGeqn} is not needed. Instead, we compute the weighted trace $\Aa\ubhk$
{\em face-by-face (on the mesh skeleton) in parallel, completely
independent of each other}. The iHDG approach is therefore well-suited
for parallel computing systems. It can be viewed as a fixed-point
iterative solver by alternating the computation of the local solver
\eqnref{HDGlocal} and conservation condition \eqnref{HDGeqn}. It can
be also understood as a block Gauss-Seidel approach for the linear system with 
volume and weighted trace unknowns. However, unlike matrix-based iterative 
schemes \cite{Greenbaum97, Saad03}, the proposed iHDG method
arises from the structure of HDG methods. As such it's convergence does not depend upon the ordering
of unknowns as the stabilization (i.e. the weighting matrix $\Aa$) automatically takes care of the direction.
For that reason, we term it as {\em iterative HDG discretization} (iHDG). Unlike the original
HDG discretization, it promotes fine-grained parallelism in the
conservation constraints. What remains is to show that iHDG converges
 as the iteration $k$ increases, and this is the focus of
Sections \secref{iHDG-hyperbolic}--\secref{iHDG-convection-diffusion}.

\section{iHDG methods for hyperbolic PDEs}
\seclab{iHDG-hyperbolic}
In this section, we present iHDG methods for scalar and system of
hyperbolic PDEs. For the clarity of the exposition, we consider the
transport equation and a linearized shallow water system, and
the extension of the proposed approach to other hyperbolic PDEs is straightforward. 
To begin, let us consider the transport equation
\begin{subequations}
\eqnlab{transport}
\begin{align}
\betab \cdot \Grad \u^e &= f \quad \text{ in }
\Omega, \\
\u^e &= \g \quad \text{ on } \pOmega^-,
\end{align}
\end{subequations}
where $\pOmega^-$ is the inflow part of the boundary $\pOmega$, and
again $\u^e$ denotes the exact solution. {\it Note that $\betab$ is
  assumed to be a continuous function across the mesh skeleton}. An upwind HDG
discretization \cite{Bui-Thanh15} for \eqnref{transport} consists of
the following local equation for each element $\K$
\begin{equation}
\eqnlab{transportLocal}
-\LRp{\u,\Div\LRp{\betab \v}}_\K  + \LRa{\betab\cdot\n\u + \snor{\betab\cdot \n}\LRp{\u -\uh},\v}_\pK= \LRp{f,\v}_\K, \quad \forall \v \in \Vh\LRp{\K},
\end{equation}
and conservation conditions on all edges $\e$ in the mesh skeleton $\Gh$:
\begin{equation}
\eqnlab{transportGlobal}
\LRa{\jump{\betab\cdot\n\u + \snor{\betab\cdot \n}\LRp{\u -\uh}},\mu}_\e = 0, \quad \forall \mu \in \Lamh\LRp{\e}.
\end{equation}

Solving \eqnref{transportGlobal} for $\snor{\betab\cdot \n}\uh$ we get
\begin{equation}
\eqnlab{tracehyperbolic}
\snor{\betab\cdot \n}\uh = \average{\betab\cdot \n \u} + \snor{\betab\cdot \n}\average{\u}.
\end{equation}

Applying the iHDG algorithm \ref{al:DDMhyperbolic} to the upwind HDG
method \eqnref{transportLocal}--\eqnref{transportGlobal} we obtain the
approximate solution $\u^{k+1}$ at the $(k+1)$th iteration restricted
on each element $\K$ via the following independent local solve: $\forall \v \in \Vh\LRp{\K}$,
\begin{equation}
\eqnlab{transportLocalk1}
- \LRp{\ukp,\Div\LRp{\betab \v}}_\K  + \LRa{\betab\cdot\n\ukp + \snor{\betab\cdot \n}\LRp{\ukp -\uhk},\v}_\pK= \LRp{f,\v}_\K,
\end{equation}
where the trace weighted trace  $\snor{\betab\cdot \n}\uhk$ is computed using information from the previous iteration as
\begin{equation}
\eqnlab{transportLocale}
\snor{\betab\cdot \n}\uhk = \average{\betab\cdot \n\uk} + \snor{\betab\cdot \n}\average{\uk}.
\end{equation}

Next we study the convergence of the iHDG method
\eqnref{transportLocalk1}--\eqnref{transportLocale}. Since
\eqnref{transport} is linear, it is sufficient to show that iHDG
converges for the homogeneous equation with zero forcing $\f$ and zero
boundary condition $\g$. Let us  define $\pKout$ as the outflow part of $\pK$,
i.e. $\betab\cdot \n\geq 0$ on $\pKout$, and $\pKin$ as the inflow
part of $\pK$, i.e. $\betab\cdot \n\leq 0$ on $\pKin$. 
\begin{theorem}\theolab{DDMConvergence} Assume $-\Div \betab \ge \alpha > 0$, i.e. \eqnref{transport} is well-posed. The above iHDG iterations for the homogeneous transport equation \eqnref{transport} converge exponentially with respect to the number of iterations $k$. In particular, there exist $J \le \Nel$ such that 
\begin{align}
\label{ThmExponentialDecay}
\sum_{\K \in \Omega_h}\nor{\uk}^2_{\frac{-\Div \betab}{2}, \K} +
\nor{\uk}_{\snor{\betab\cdot\n},\pKout}^2&\le
\frac{c(k)}{2^k}\nor{\u^0}_{\snor{\betab\cdot\n},\Gh}^2,
\end{align}
 where $c(k)$ is a polynomial in $k$ of order at most $J$ and is independent of $h$ and $\p$.
\end{theorem}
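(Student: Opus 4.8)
The plan is to combine an element-local energy identity with a propagation argument that sweeps the estimate through the mesh along $\betab$. By linearity it suffices, as observed above, to analyze the homogeneous problem, so that $\ukp$ denotes the error and solves \eqnref{transportLocalk1} with $\f = 0$ and $\g = 0$. First I would test the local equation with $\v = \ukp$ and integrate the volume term by parts, using $\Div\LRp{\betab \ukp} = \LRp{\Div\betab}\ukp + \betab\cdot\Grad\ukp$ together with $\LRp{\betab\cdot\Grad\ukp,\ukp}_\K = -\tfrac12\LRp{\Div\betab\,\ukp,\ukp}_\K + \tfrac12\LRa{\betab\cdot\n\,\ukp,\ukp}_{\pK}$. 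The convective boundary term then cancels the one produced by integration by parts, the upwind stabilization $\snor{\betab\cdot\n}\ukp$ survives, and splitting $\pK = \pKout\cup\pKin$ (so that $\snor{\betab\cdot\n} = \pm\,\betab\cdot\n$) yields the local identity
\begin{equation*}
\nor{\ukp}^2_{\frac{-\Div\betab}{2},\K} + \frac32\nor{\ukp}^2_{\snor{\betab\cdot\n},\pKout} + \frac12\nor{\ukp}^2_{\snor{\betab\cdot\n},\pKin} = \LRa{\snor{\betab\cdot\n}\uhk,\ukp}_{\pK}.
\end{equation*}
The hypothesis $-\Div\betab\ge\alpha>0$ makes the first term a genuine positive energy.

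The crucial step is to insert the trace formula \eqnref{transportLocale} into the right-hand side. A direct computation using $\average{\cdot} = \tfrac12\jump{\cdot}$ shows that the weighted trace collapses to the upwind value: $\snor{\betab\cdot\n}\uhk = \snor{\betab\cdot\n}\uk$ on $\pKout$ (the element's own previous iterate), whereas $\snor{\betab\cdot\n}\uhk = \snor{\betab\cdot\n}\eval{\uk}_{\Kp}$ on $\pKin$ (the previous iterate of the element $\Kp$ immediately upstream). Applying Cauchy--Schwarz and Young's inequality to the two resulting boundary pairings and absorbing the factor-$\tfrac12$ self-terms into the left-hand side, I obtain the contraction
\begin{equation*}
\nor{\ukp}^2_{\frac{-\Div\betab}{2},\K} + \nor{\ukp}^2_{\snor{\betab\cdot\n},\pKout} \le \frac12\nor{\uk}^2_{\snor{\betab\cdot\n},\pKout} + \frac12\nor{\eval{\uk}_{\Kp}}^2_{\snor{\betab\cdot\n},\pKin}.
\end{equation*}
The decisive feature is that no inverse or trace inequality is invoked, so the factor $\tfrac12$ is a pure consequence of the upwind flux; this is precisely what will render the rate independent of both $h$ and $\p$.

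It remains to propagate this element-local bound. Since each interior face is the outflow face of exactly one element and the inflow face of exactly one other, while inflow-boundary faces carry zero data by homogeneity, the last term above equals the upstream neighbour's outflow energy restricted to the shared face. Collecting the outflow energies $a_\K^k := \nor{\uk}^2_{\snor{\betab\cdot\n},\pKout}$ into a vector $\mathbf{a}^k\in\R^{\Nel}$, the estimates assemble componentwise into $\mathbf{a}^{k+1}\le\tfrac12\LRp{I+S}\mathbf{a}^k$, where the nonnegative matrix $S$ redistributes each element's outflow energy among its downstream neighbours and is therefore column-substochastic. Because the upwind coupling is one-directional, the elements admit an ordering along the flow in which $S$ is strictly triangular, hence nilpotent with some index $J\le\Nel$; the binomial expansion $\LRs{\tfrac12\LRp{I+S}}^k = 2^{-k}\sum_{j=0}^{J}\binom{k}{j}S^j$ together with $\nor{S^j\mathbf{a}^0}\le\nor{\mathbf{a}^0}$ then yields $\nor{\mathbf{a}^k}\le 2^{-k}c(k)\nor{\mathbf{a}^0}$ with $c(k) = \sum_{j=0}^{J}\binom{k}{j}$ a polynomial of degree at most $J$. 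Summing the discarded volume terms through the telescoped inequalities recovers the full left-hand side of \eqref{ThmExponentialDecay}, and $\nor{\mathbf{a}^0}$ is dominated by $\nor{\u^0}^2_{\snor{\betab\cdot\n},\Gh}$.

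The main obstacle is this last step: converting the pointwise local contraction into the global $c(k)/2^k$ bound. This demands (i) the careful face-by-face bookkeeping that matches each inflow contribution with a unique upstream outflow contribution, and (ii) establishing the nilpotent/triangular structure of $S$ -- equivalently, a consistent flow ordering of the elements -- along with the uniform control of $\nor{S^j\,\cdot\,}$ that keeps $c(k)$ polynomial of degree $\le J$ with $J$ independent of $h$ and $\p$. Everything else reduces to the energy algebra sketched above.
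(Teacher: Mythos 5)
Your proposal is correct and follows essentially the same route as the paper: the same energy identity with the $\tfrac32/\tfrac12$ split over $\pKout/\pKin$, the same observation that the weighted trace collapses to the upwind value, and the same Cauchy--Schwarz step yielding the local bound $\nor{\ukp}^2_{\frac{-\Div\betab}{2},\K}+\nor{\ukp}^2_{\snor{\betab\cdot\n},\pKout}\le\tfrac12\LRp{\nor{\uk}^2_{\snor{\betab\cdot\n},\pKout}+\nor{\uk_{\text{ext}}}^2_{\snor{\betab\cdot\n},\pKin}}$ with no inverse inequality, which is exactly why the rate is $h$- and $\p$-independent. Your nilpotent-matrix/binomial-expansion packaging of the propagation step is just a reformulation of the paper's recursive layering $\mc{K}^1,\mc{K}^2,\dots$ (both rest on the same implicit acyclicity of the element dependency graph, and both produce the degree-$\le J$ polynomial $c(k)$), so the two arguments are the same in substance.
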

\begin{remark} Note that the factor $\varrho(k)=\frac{k^J}{2^{k/2}}$, i.e. the largest possible term in $c\LRp{k}$, is a bounded function, which implies  $ \frac{c(k)}{2^{\frac{k}{2}}}$ is also bounded by a constant $C_J$ depending only on $J$. As a consequence:
$$\frac{c(k)}{2^{{k}}} \le \frac{C_J}{2^{k/2}} \stackrel{k\to \infty}{\longrightarrow} 0.$$
\end{remark}
\begin{proof}
Taking $\v = \u^{k+1}$ in \eqnref{transportLocalk1} and applying homogeneous forcing condition yield
\begin{equation}
\eqnlab{transportLocalb}
- \LRp{\u^{k+1},\nabla\cdot\LRp{\betab \u^{k+1}}}_{\K}  + \LRa{\betab\cdot \n\u^{k+1}+|\betab\cdot \n|(\u^{k+1}-\hat{\u}^k), \u^{k+1}}_{\pK} =0.
\end{equation}
 Since 
 \begin{eqnarray}\nonumber
 \LRp{\u^{k+1},\nabla\cdot\LRp{\betab \u^{k+1}}}_{K} &=&\LRp{\u^{k+1},\Div\betab \u^{k+1}}_{K}+\LRp{\u^{k+1},\betab \cdot \nabla  \u^{k+1}}_{K}, \nonumber
 \end{eqnarray}
integrating by parts the second term on right hand side
\begin{eqnarray}\nonumber
\LRp{\u^{k+1},\nabla\cdot\LRp{\betab \u^{k+1}}}_{K} &=&\LRp{\u^{k+1},\Div \betab \u^{k+1}}_{K}-\LRp{\u^{k+1},\nabla\cdot\LRp{\betab \u^{k+1}}}_{K}\\ \nonumber
 & &+\LRa{\betab\cdot \n\u^{k+1}, \u^{k+1}}_{\partial K}. \nonumber
 \end{eqnarray}
Rearranging the terms
\begin{equation}
\eqnlab{Identity}
\LRp{\u^{k+1},\nabla\cdot\LRp{\betab \u^{k+1}}}_{\K} 
=\LRp{\u^{k+1},\frac{\Div \betab}{2} \u^{k+1}}_{\K}+\frac{1}{2}\LRa{\betab\cdot \n\u^{k+1} ,\u^{k+1}}_{\pK}.
\end{equation}
Using \eqnref{Identity} we can rewrite \eqnref{transportLocalb} as
\begin{equation}
\label{transportLocalc}
\nor{\ukp}^2_{\frac{-\Div \betab}{2}, \K}+\LRa{\LRp{|\betab\cdot \n|+\frac{1}{2}\betab\cdot \n}\u^{k+1} ,\u^{k+1}}_{\pK}= \LRa{|\betab\cdot \n|\hat{\u}^k, \u^{k+1}}_{\pK}.
\end{equation}

On the other hand, \eqnref{transportLocale} is equivalent to
\begin{equation}
\label{uhk}
 |\betab\cdot\n|\hat{\u}^k=
\left\{
\begin{array}{ll}
|\betab\cdot\n|\uk & \text{ on } \pKout\\ 
|\betab\cdot\n|\uk_{\text{ext}}, & \text{ on } \pKin
\end{array}
\right.,
\end{equation}
where ${\u}^k_{\text{ext}}$ is either the physical boundary condition or the solution of the neighboring
element that shares the same inflow boundary $\pKin$.

Rewriting \eqref{transportLocalc} in terms of $\pKin$ and $\pKout$, we obtain
\begin{eqnarray*}
    &&\nor{\ukp}^2_{\frac{-\Div \betab}{2}, \K}+\frac{3}{2}\LRa{{|\betab\cdot \n|}\u^{k+1} ,\u^{k+1}}_{\pKout}+\frac{1}{2}\LRa{{|\betab\cdot \n|}\u^{k+1} ,\u^{k+1}}_{\pKin}\\\nonumber
&=& \LRa{|\betab\cdot \n|{\u}^k, \u^{k+1}}_{\pKout}+\LRa{|\betab\cdot \n|{\u}^k_{\text{ext}}, \u^{k+1}}_{\pKin}.
\end{eqnarray*}
By Cauchy-Schwarz inequality we have
\begin{eqnarray*}
    &&\nor{\ukp}^2_{\frac{-\Div \betab}{2}, \K}+\frac{3}{2}\LRa{{|\betab\cdot \n|}\u^{k+1} ,\u^{k+1}}_{\pKout}+\frac{1}{2}\LRa{{|\betab\cdot \n|}\u^{k+1} ,\u^{k+1}}_{\pKin}\\\nonumber
&\leq& \frac{1}{2}\LRa{|\betab\cdot \n|{\u}^k, \u^{k}}_{\pKout}+\frac{1}{2}\LRa{|\betab\cdot \n|{\u}^{k+1}, \u^{k+1}}_{\pKout}\\\nonumber
& &+\frac{1}{2}\LRa{|\betab\cdot \n|{\u}^k_{\text{ext}}, {\u}^k_{\text{ext}}}_{\pKin}+\frac{1}{2}\LRa{|\betab\cdot \n|\u^{k+1}, \u^{k+1}}_{\pKin},
\end{eqnarray*}
which implies
\begin{multline}
\nor{\ukp}^2_{\frac{-\Div \betab}{2}, \K}+\nor{\ukp}_{|\betab\cdot \n|,\pKout}^2
\leq \half\LRc{\nor{\uk}_{|\betab\cdot \n|,\pKout}^2+\nor{\uk_{\text{ext}}}_{|\betab\cdot \n|,\pKin}^2}.\eqnlab{transportLocalg}
\end{multline}

Consider the set $\mathcal{K}^1$ of all elements $K$ such that
$\pKin$ is a subset of the physical inflow boundary
$\pOmega^{\text{in}}$ on which
we have $\u^k_{\text{ext}}=0$ for all $k \in \mathbb{N}$.
 We obtain from \eqnref{transportLocalg} that
\begin{equation}
\eqnlab{transportLocalh}
\nor{\ukp}^2_{\frac{-\Div \betab}{2}, \K}+\nor{\ukp}_{|\betab\cdot \n|,\pKout}^2
\leq \half\nor{\uk}_{|\betab\cdot \n|,\pKout}^2,
\end{equation}
which implies
\begin{equation}
\eqnlab{transportLocali}
\nor{\ukp}_{|\betab\cdot \n|,\pKout}^2
\leq \half\nor{\uk}_{|\betab\cdot \n|,\pKout}^2 \leq \cdots\leq
\frac{1}{2^{k+1}}\nor{\u^0}_{|\betab\cdot \n|,\pKout}^2.
\end{equation}
From \eqnref{transportLocalh} and \eqnref{transportLocali} we also have
\begin{equation}
\nor{\ukp}^2_{\frac{-\Div \betab}{2}, \K} \leq
\frac{1}{2^{k+1}}\nor{\u^0}_{|\betab\cdot \n|,\pKout}^2.
\end{equation}

Next, let us define $\Omega^1_h := \Omega_h$ and
\[
\Omega^2_h :=\Omega^1_h\backslash \mc{K}^1.
\]
Consider the set $\mathcal{K}^2$ of all $K$ in $\Omega^2_h$ such that $\pKin$ is either
(possibly partially) a subset of the physical inflow boundary
$\pOmega^{\text{in}}$ or (possibly partially) a subset of the outflow
boundary of elements in $\mc{K}^1$. This implies, on $\pKin \in \mc{K}^2$, $\u^k_{\text{ext}}$ either is zero
for all $k \in \mathbb{N}\setminus\LRc{1}$ or
satisfies the bound 
\begin{equation}
\label{transportLocalrighta}
\nor{\uk_{\text{ext}}}_{|\betab\cdot \n|,\pKin}^2
\leq 
\frac{1}{2^{k}}\nor{\u^0_{\text{ext}}}_{|\betab\cdot \n|,\pKin}^2.
\end{equation}
Combining \eqnref{transportLocalg} and \eqref{transportLocalrighta}, we obtain
\begin{eqnarray}\eqnlab{pKoutBound}
\nor{\ukp}_{|\betab\cdot \n|,\pKout}^2 \le \frac{1}{2^{k+1}}\LRc{\nor{\u^0}_{|\betab\cdot \n|,\pKout}^2 + (k+1)\nor{\u^0_{\text{ext}}}_{|\betab\cdot \n|,\pKin}^2},
\end{eqnarray}
which, together with \eqnref{transportLocalg}, leads to
\begin{equation}
\eqnlab{KBound}
\nor{\ukp}^2_{\frac{-\Div \betab}{2}, \K}\leq \frac{1}{2^{k+1}}\LRc{\nor{\u^0}_{|\betab\cdot \n|,\pKout}^2 + (k+1)\nor{\u^0_{\text{ext}}}_{|\betab\cdot \n|,\pKin}^2}.
\end{equation}
Now defining $\Omega^i_h$ and $\mc{K}^{i}$ recursively, and
repeating the above arguments concludes the proof.
\end{proof}

We can see that the contraction constant in this case is $1/2$, in our numerical experiments we found the
spectral radius of the iteration matrix to be exactly $1/2$ which confirms the theoretical result. 
We are in a position to discuss the convergence of the $k$th iterative
solution to the exact solution $\u^e$. For sufficiently smooth exact
solution, e.g. $\eval{u^e}_K \in H^{s}\LRp{\K}, s > 3/2$, we assume
the following standard convergence result of DG (HDG) methods for
transport equation: let $\sigma = \min\LRc{\p+1,s}$, we have
\begin{equation}
\eqnlab{HDGconvergence}
\nor{\u - \u^e}_{\Omega_h}^2 \le C \frac{h^{2\sigma - 1}}{\p^{2s-1}}\nor{\u^e}_{H^{s}\LRp{\Omega_h}}^2,
\end{equation}
and we refer the readers to, for example, \cite{JohnsonPitkaranta86,
  Bui-Thanh15} for a proof.
\begin{corollary}
Suppose the exact solution $\u^e$ is sufficiently smooth,
i.e. $\eval{u^e}_K \in H^{s}\LRp{\K}, s > 3/2$, then there exists a constant
$C$ independent of $k,h$ and $\p$ such that
\begin{align*}
\nor{\uk - \u^e}_{\Omega_h}^2 &\le C\LRp{\frac{c(k)}{2^k}\nor{\u^0}_{\snor{\betab\cdot\n},\Gh}^2 +  \frac{h^{2\sigma - 1}}{\p^{2s-1}}\nor{\u^e}_{H^{s}\LRp{\Omega_h}}^2},
\end{align*}
 where $c(k)$ is a polynomial in $k$ of order at most $\Nel$ and is independent of $h$ and $\p$.
\end{corollary}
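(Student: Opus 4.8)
The plan is the standard split of the total error into an iteration error plus a discretization error, followed by bounding each piece with a result already in hand. Writing $\u$ for the fully converged (fixed point) HDG solution of \eqnref{transportLocal}--\eqnref{transportGlobal}, the triangle inequality together with $\LRp{a+b}^2 \le 2a^2+2b^2$ gives
\begin{equation*}
\nor{\uk - \u^e}_{\Omega_h}^2 \le 2\,\nor{\uk - \u}_{\Omega_h}^2 + 2\,\nor{\u - \u^e}_{\Omega_h}^2.
\end{equation*}
The second term is precisely the HDG discretization error and is controlled directly by the assumed estimate \eqnref{HDGconvergence}, contributing the $\frac{h^{2\sigma-1}}{\p^{2s-1}}\nor{\u^e}_{H^{s}\LRp{\Omega_h}}^2$ part of the bound. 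Everything therefore reduces to estimating the iteration error $\nor{\uk - \u}_{\Omega_h}^2$.

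For that term the key observation is a linearity/fixed-point reduction: since the local solve \eqnref{transportLocalk1}--\eqnref{transportLocale} is linear and $\u$ is its exact fixed point, the iteration error $z^k := \uk - \u$ itself satisfies the \emph{homogeneous} iHDG iteration (zero forcing, zero inflow data) analysed in the preceding theorem. Hence the contraction estimate \eqref{ThmExponentialDecay} applies verbatim to $z^k$, yielding
\begin{equation*}
\sum_{\K\in\Omega_h}\nor{z^k}^2_{\frac{-\Div \betab}{2},\K} + \nor{z^k}_{\snor{\betab\cdot\n},\pKout}^2 \le \frac{c(k)}{2^k}\,\nor{z^0}_{\snor{\betab\cdot\n},\Gh}^2,
\end{equation*}
with $z^0 = \u^0 - \u$ the initial iteration error and $c(k)$ a polynomial of degree at most $\Nel$, independent of $h$ and $\p$.

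What is left is a norm conversion, turning the element-wise weighted energy norm on the left into the plain $L^2\LRp{\Omega_h}$ norm of the corollary. The well-posedness hypothesis $-\Div\betab \ge \alpha > 0$ gives the pointwise lower bound $\nor{v}^2_{\frac{-\Div\betab}{2},\K} \ge \frac{\alpha}{2}\nor{v}_{\K}^2$, so, dropping the nonnegative boundary term,
\begin{equation*}
\nor{z^k}_{\Omega_h}^2 = \sum_{\K\in\Omega_h}\nor{z^k}_{\K}^2 \le \frac{2}{\alpha}\sum_{\K\in\Omega_h}\nor{z^k}^2_{\frac{-\Div\betab}{2},\K} \le \frac{2}{\alpha}\,\frac{c(k)}{2^k}\,\nor{z^0}_{\snor{\betab\cdot\n},\Gh}^2.
\end{equation*}
Because $\alpha$ depends only on $\betab$ and not on $k,h,\p$, the factor $2/\alpha$ (together with the constant in \eqnref{HDGconvergence} and the factor $2$ from the triangle inequality) is absorbed into a single constant $C$ independent of $k,h,\p$. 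Substituting the two bounds back into the squared triangle inequality produces exactly the stated inequality.

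The only point requiring care, the main obstacle though it is bookkeeping rather than analysis, is the initial-data factor. The contraction estimate naturally produces $\nor{z^0}_{\snor{\betab\cdot\n},\Gh}=\nor{\u^0-\u}_{\snor{\betab\cdot\n},\Gh}$, whereas the corollary writes $\nor{\u^0}_{\snor{\betab\cdot\n},\Gh}$. These agree under the convention inherited from the theorem, where the iteration is run on the error so that the converged solution plays the role of the origin; equivalently one identifies the symbol $\u^0$ in the corollary with the initial iteration error $\u^0-\u$. No new estimate beyond the triangle inequality, the $\alpha$-weighted norm equivalence, and the two already-available bounds \eqref{ThmExponentialDecay} and \eqnref{HDGconvergence} is needed.
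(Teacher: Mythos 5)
Your proposal is correct and follows essentially the same route as the paper, which proves the corollary simply by combining Theorem \theoref{DDMConvergence}, the HDG convergence estimate \eqnref{HDGconvergence}, and the triangle inequality. The extra details you supply -- the linearity reduction of the iteration error to the homogeneous case, the conversion from the $\frac{-\Div\betab}{2}$-weighted norm to the $L^2$ norm via $-\Div\betab\ge\alpha>0$, and the remark that $\u^0$ in the statement is really the initial iteration error -- are exactly the steps the paper leaves implicit.
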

\begin{proof}
The result is a direct consequence of the result from Theorem \theoref{DDMConvergence},
the HDG (DG) convergence result \eqnref{HDGconvergence}, and the triangle inequality.
\end{proof}

\begin{remark}
\remalab{time-dependent-transport}
For time-dependent transport equation, we discretize the spatial
operator using HDG and time using backward Euler method (or Crank-Nicholson method or higher-order method if desired). The iHDG approach in this case is almost identical to the
one for steady state equation except that we now have an additional
$L^2$-term $\LRp{\ukp,\v}_{\K}/\Delta t$ in the local equation
\eqnref{transportLocalk1}. This improves the convergence of iHDG. Indeed, the convergence analysis is almost
identical except we now have $\nor{\ukp}^2_{-\Div \betab/2 +
  1/\Delta t, \K}$ instead of $\nor{\ukp}^2_{-\Div \betab/2,
  \K}$ in \eqnref{transportLocalg}.
\end{remark}

Now let us consider the flux given in Nguyen et. al. (NPC flux) \cite{NguyenPeraireCockburn09a} and analyze 
the convergence of iHDG scheme. The stabilization $\tau$ of NPC flux is given by
\begin{equation}
\eqnlab{Ntaub}
\tau=|\betab\cdot \n|\frac{1+sgn({\betab\cdot \n})}{2}-{\betab\cdot\n},
\end{equation}
and the trace $\uhk$ at the $k$th iteration is computed as
\[
	\uhk = \frac{\average{\tau\uk}+\average{\betab\cdot\n\uk}}{\average{\tau}}.
\]

In this case we apply the iHDG algorithm \ref{al:DDMhyperbolic} without the weighting $\Aa$ in front 
of the trace $\uhk$ as the stabilization comes from $\tau$.

\begin{theorem}
\theolab{iHDG-NPC} Assume $-\Div \betab \ge \alpha > 0$,
i.e. \eqnref{transport} is well-posed.  There exists $J \le \Nel$ such
that the iHDG algorithm with the NPC flux for the homogeneous
transport equation converges in $J$ iterations.\footnote{The proof is
similar to the proof of Theorem \theoref{DDMConvergence}, and hence
is omitted here.}
\end{theorem}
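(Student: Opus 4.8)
The plan is to mirror the energy argument of Theorem \theoref{DDMConvergence}, but to exploit the sign structure of the NPC stabilization $\tau$ in \eqnref{Ntaub} to upgrade the geometric decay into exact, finite-step termination. First I would record the two values of $\tau$: on the outflow part $\betab\cdot\n>0$, so $\mathrm{sgn}(\betab\cdot\n)=1$ and $\tau=|\betab\cdot\n|-\betab\cdot\n=0$; on the inflow part $\betab\cdot\n<0$, so $\mathrm{sgn}(\betab\cdot\n)=-1$ and $\tau=-\betab\cdot\n=|\betab\cdot\n|$. Hence the NPC flux penalizes only the inflow traces, which is precisely the feature that drives the whole argument.

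Next I would write the homogeneous iHDG local solve with the NPC flux (and no $\Aa$ weighting), test with $\v=\ukp$, and apply the integration-by-parts identity \eqnref{Identity} exactly as before. Splitting $\pK$ into $\pKin$ and $\pKout$ and inserting the values of $\tau$ above, the outflow stabilization disappears while the inflow stabilization combines with the boundary term from \eqnref{Identity}; I expect the local balance
\begin{equation*}
\nor{\ukp}^2_{\frac{-\Div \betab}{2}, \K}+\half\nor{\ukp}_{|\betab\cdot \n|,\pKout}^2+\half\nor{\ukp}_{|\betab\cdot \n|,\pKin}^2=\LRa{|\betab\cdot \n|\uhk,\ukp}_{\pKin}.
\end{equation*}

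The step that does most of the work is evaluating the NPC trace on an interior inflow face. Using continuity of $\betab$ across the skeleton one has $\average{\tau}=\half|\betab\cdot\n|$, and inserting the definitions of $\average{\tau\uk}$ and $\average{\betab\cdot\n\uk}$ into $\uhk=(\average{\tau\uk}+\average{\betab\cdot\n\uk})/\average{\tau}$, I expect the contributions from the element itself to cancel, collapsing $\uhk$ to the pure upwind value $\uhk=\uk_{\text{ext}}$ on $\pKin$, where $\uk_{\text{ext}}$ is the inflow-neighbour trace (or the boundary datum, here zero). Substituting this and applying Young's inequality cancels the inflow contributions on both sides, leaving the one-sided estimate
\begin{equation*}
\nor{\ukp}^2_{\frac{-\Div \betab}{2}, \K}+\half\nor{\ukp}_{|\betab\cdot \n|,\pKout}^2\le\half\nor{\uk_{\text{ext}}}_{|\betab\cdot \n|,\pKin}^2.
\end{equation*}

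The decisive observation — and the reason one obtains exact termination rather than the $1/2$ contraction of Theorem \theoref{DDMConvergence} — is that, because $\tau=0$ on outflow, the right-hand side no longer contains the element's own previous outflow value $\nor{\uk}_{|\betab\cdot\n|,\pKout}^2$; it depends purely on upwind data. I would then rerun the layered induction: on the set $\mc{K}^1$ of elements whose inflow lies on the physical boundary $\pOmega^{\text{in}}$ one has $\uk_{\text{ext}}=0$, so the estimate forces $\u^1\equiv0$ there after a single sweep; defining $\Omega^2_h=\Omega^1_h\backslash\mc{K}^1$ and taking $\mc{K}^2$ to be the next layer fed by $\mc{K}^1$, the vanishing of those upwind neighbours forces $\u^2\equiv0$ on $\mc{K}^2$, and so on recursively. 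Since each sweep annihilates one more layer and there are at most $\Nel$ elements, the iteration reaches $\u^J\equiv0$ for some $J\le\Nel$, i.e. exact convergence in $J$ iterations. The only mild subtlety I would watch is the bookkeeping when $\pKin$ is only partially a physical or already-annihilated boundary, which is handled exactly as in the proof of Theorem \theoref{DDMConvergence} by splitting the inflow face accordingly.
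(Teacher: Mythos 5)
Your proposal is correct and follows exactly the route the paper intends (the authors omit the proof, noting only that it parallels Theorem \theoref{DDMConvergence}): you correctly compute that the NPC stabilization vanishes on outflow and equals $\snor{\betab\cdot\n}$ on inflow, that the trace formula then collapses to the pure upwind value $\uhk=\uk_{\text{ext}}$ on $\pKin$, and that the resulting one-sided energy estimate — whose right-hand side contains no self-term $\nor{\uk}_{\snor{\betab\cdot\n},\pKout}^2$ — annihilates one layer $\mc{K}^j$ per sweep, giving exact convergence in $J\le\Nel$ iterations. The layered decomposition and the handling of partially shared inflow faces are inherited verbatim from the proof of Theorem \theoref{DDMConvergence}, so no new gaps are introduced.
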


This theorem shows that for the scalar hyperbolic equation
\eqnref{transport}, iHDG with the NPC flux converges in finite number of
iterations, which is faster than the upwind HDG flux. The reason is that the NPC flux
mimics the matching of wave propagation from the inflow to the outflow. 
However, designing such a scheme for a system of hyperbolic equations, such as 
the linearized shallow water system, does not seem to be tractable due to the 
interaction of more than one waves. In this sense the upwind HDG flux is more robust, since
it is applicable for other system of hyperbolic PDEs as well, as we now show.

We next consider the following system of linear hyperbolic PDEs arisen
from the oceanic linearized shallow water system
\cite{GiraldoWarburton08}:
\begin{equation}
    \pp{}{t}\LRp{
      \begin{array}{c}
        \phi^e \\
       \Phi \u^e \\
        \Phi \v^e
      \end{array}
    } + 
    \pp{}{x}\LRp{
      \begin{array}{c}
        \Phi  \u^e \\
        \Phi\phi^e \\
        0
      \end{array}
    } + 
    \pp{}{y}\LRp{
      \begin{array}{c}
        \Phi \v^e \\
        0 \\
        \Phi\phi^e
      \end{array}
    } = 
    \LRp{
      \begin{array}{c}
        0 \\
        f\Phi\v^e - \gamma \Phi\u^e + \frac{\tau_x}{\rho} \\
       -f\Phi\u^e - \gamma \Phi\v^e + \frac{\tau_y}{\rho}
      \end{array}
    }
\eqnlab{linearizedShallow}
\end{equation}
where $\phi = g \h$ is the geopotential height with $g$ and $\h$ being
the gravitational constant and the perturbation of the free surface
height, $\Phi > 0$ is a constant mean flow geopotential height, $\vel := \LRp{\u,\v}$ is
the perturbed velocity, $\gamma \ge 0$ is the bottom friction, 
$\bs{\tau}:=\LRp{\tau_x,\tau_y}$ is the wind stress, and $\rho$ is the density of
the water. Here, $f = f_0 + \beta \LRp{y - y_m}$ is the
Coriolis parameter,  where $f_0$, $\beta$, and $y_m$ are given constants.

Again, for the simplicity of the exposition and the analysis, let us employ
the backward Euler discretization for temporal derivatives and HDG \cite{bui2016construction} 
for spatial ones. Since the unknowns of interest
are those at the $(m+1)$th time step, we can suppress the time index
for the clarity of the exposition.  Furthermore, since the system
\eqnref{linearizedShallow} is linear, a similar argument as above shows that it is sufficient to consider homogeneous system with zero initial condition, boundary condition, and forcing. Also here we consider the case of $\bs{\tau}=0$. 
Applying the iHDG algorithm \ref{al:DDMhyperbolic} to 
 the  homogeneous system gives
\begin{subequations}
\eqnlab{localSolverDD}
\begin{align}
\eqnlab{localSolverPhiDD}
&\LRp{\frac{\phikp}{\Delta t},\varphi_1}_K - \LRp{\Phi\velkp,\Grad\varphi_1}_K + \LRa{\Phi \velkp \cdot \n + \sqrt{\Phi}\LRp{\phikp -\phihk},\varphi_1}_\pK = 0, \\
\eqnlab{localSolverUDD}
&\LRp{\frac{{\Phi\ukp}}{\Delta t},\varphi_2}_K - \LRp{\Phi\phikp,\pp{\varphi_2}{x}}_K + \LRa{\Phi\phihk\n_1,\varphi_2}_\pK = \LRp{f\Phi\vkp - \gamma \Phi\ukp, \varphi_2}_K, \\
\label{localSolverVDD}
&\LRp{\frac{{\Phi\vkp}}{\Delta t},\varphi_3}_K - \LRp{\Phi\phikp,\pp{\varphi_3}{y}}_K + \LRa{\Phi\phihk\n_2,\varphi_3}_\pK = \LRp{-f\Phi\ukp - \gamma \Phi\vkp , \varphi_3}_K,
\end{align}
\end{subequations}
where $\varphi_1, \varphi_2$ and $\varphi_3$ are the test functions, and 
\begin{equation*}
\phihk= \average{\phik} + \sqrt{\Phi}\average{\velk\cdot\n}.
\end{equation*}
Our goal is to show that $\LRp{\phikp,\Phi\velkp}$ converges to
zero. To that end, let us define
\begin{eqnarray}\eqnlab{ContractionConstant}
\mathcal{C}:=\frac{\mathcal{A}}{\mathcal{B}}, \quad \mathcal{A}
:=\max\left\{\frac{\Phi+\sqrt\Phi}{2},\frac{1+\sqrt\Phi}{2}\right\},
\end{eqnarray}
and
\begin{eqnarray*}
\mathcal{B}
:=\min\left\{\left(\frac{ch}{\Delta t(\p+1)(\p+2)}+\frac{\sqrt\Phi -\Phi}{2}\right),\left(\left(\gamma+\frac{1}{\Delta t}\right)\frac{ch}{(\p+1)(\p+2)}+\frac{(-1-\sqrt\Phi)}{2}\right)\right\}.
\end{eqnarray*}
where $0<c\le1$ is a constant.
We also need the following norms:
\begin{align}
\nor{\LRp{\phik,\velk}}^2_{\Omega_h}&:=\nor{\phik}_{\Omega_h}^2 + \nor{\velk}_{\Phi, \Omega_h}^2, \quad \nor{\LRp{\phik,\velk}}^2_\Gh&:=\nor{\phik}_{\Gh}^2 + \nor{\velk}_{\Phi, \Gh}^2.\nonumber
\end{align}

\begin{theorem}\theolab{ThmConvergenceShallowWater} Assume that the mesh size $h$, the time step $\Delta t$ and the solution order $\p$ are chosen such that $\mc{B} > 0$ and $\mathcal{C}<1$,
then the approximate solution at the $k$th iteration
$\LRp{\phik,\velk}$ converges to zero, i.e.,
\begin{align}\nonumber
\nor{\LRp{\phik,\velk}}^2_\Gh \leq \mathcal{C}^k \nor{\LRp{\phi^0,\vel^0}}^2_\Gh, \quad
\nor{\LRp{\phik,\velk}}^2_{\Omega_h} \leq \Delta t \mc{A}\LRp{\mc{C}+1}\mc{C}^{k-1}\nor{\LRp{\phi^0,\vel^0}}^2_\Gh, \nonumber
\end{align}
where $\mc{C}$ is defined in \eqnref{ContractionConstant}.
\end{theorem}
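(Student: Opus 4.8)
The plan is to mimic the energy argument of Theorem \theoref{DDMConvergence}, but now the two-way propagation of gravity waves prevents any inflow-to-outflow ordering of the elements, so instead of terminating in finitely many layers we must extract a genuine contraction constant $\mc{C}$.

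First I would test the three local solvers \eqnref{localSolverDD} with their own unknowns, i.e. set $\varphi_1 = \phikp$ in \eqnref{localSolverPhiDD}, $\varphi_2 = \ukp$ in \eqnref{localSolverUDD} and $\varphi_3 = \vkp$ in \eqref{localSolverVDD}, and add the three identities over a single element $\K$. Using the integration-by-parts identity of the type \eqnref{Identity} on the coupling terms $-\LRp{\Phi\velkp,\Grad\phikp}_\K$ and $-\LRp{\Phi\phikp,\Div\velkp}_\K$, these interior volume contributions cancel against each other and leave only boundary terms. Two further simplifications occur on the right-hand source: the Coriolis contributions $\LRp{f\Phi\vkp,\ukp}_\K$ and $\LRp{-f\Phi\ukp,\vkp}_\K$ cancel by antisymmetry, while the friction terms produce the nonnegative quantity $\gamma\Phi\nor{\velkp}_\K^2$. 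After the coupling boundary terms $\LRa{\Phi\velkp\cdot\n,\phikp}_\pK$ arising from the flux and from the integration by parts cancel as well, I expect the per-element energy identity
\begin{equation*}
\frac{1}{\Delta t}\nor{\phikp}_\K^2 + \LRp{\frac{1}{\Delta t}+\gamma}\Phi\nor{\velkp}_\K^2 + \sqrt\Phi\nor{\phikp}_\pK^2 = \sqrt\Phi\LRa{\phihk,\phikp}_\pK - \Phi\LRa{\phihk,\velkp\cdot\n}_\pK.
\end{equation*}

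Next I would insert $\phihk = \average{\phik} + \sqrt\Phi\average{\velk\cdot\n}$, which is single-valued on each face and, written per side, equals the average of the two elements' outgoing characteristics $\phi + \sqrt\Phi\vel\cdot\n$. The right-hand side is then bounded by Young's inequality, splitting each product into an iteration-$k$ part (collected into $\nor{\phihk}_\pK^2$, hence controlled by the skeleton norm $\nor{\LRp{\phik,\velk}}_\Gh^2$ with coefficient $\mc{A}$) and an iteration-$(k+1)$ part. To make the latter absorbable I would apply the polynomial trace inequality $\nor{v}_\pK^2 \le \frac{(\p+1)(\p+2)}{ch}\nor{v}_\K^2$ to the two volume terms $\frac{1}{\Delta t}\nor{\phikp}_\K^2$ and $\LRp{\frac{1}{\Delta t}+\gamma}\Phi\nor{\velkp}_\K^2$; this is exactly where the factors $\frac{ch}{\Delta t(\p+1)(\p+2)}$ and $\LRp{\gamma+\frac{1}{\Delta t}}\frac{ch}{(\p+1)(\p+2)}$ in $\mc{B}$ appear. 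Moving the iteration-$(k+1)$ boundary contributions to the left, the surviving coefficients of $\nor{\phikp}_\pK^2$ and $\Phi\nor{\velkp}_\pK^2$ are precisely the two entries of $\mc{B}$, and summing over all elements while pairing the two traces on each interior face yields the skeleton contraction $\mc{B}\,\nor{\LRp{\phikp,\velkp}}_\Gh^2 \le \mc{A}\,\nor{\LRp{\phik,\velk}}_\Gh^2$, i.e. $\nor{\LRp{\phikp,\velkp}}_\Gh^2 \le \mc{C}\,\nor{\LRp{\phik,\velk}}_\Gh^2$ with $\mc{C}=\mc{A}/\mc{B}<1$. Iterating gives the first claimed bound. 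For the second bound I would return to the energy identity, keep the two volume norms on the left, bound its right-hand side by $\mc{A}\LRp{\nor{\LRp{\phik,\velk}}_\Gh^2 + \nor{\LRp{\phikp,\velkp}}_\Gh^2}$, and use the skeleton contraction already proved together with $\nor{\LRp{\phikp,\velkp}}_\Gh^2 \le \mc{C}\nor{\LRp{\phik,\velk}}_\Gh^2$; after multiplying by $\Delta t$ and relabelling $k+1\to k$ this produces the factor $\Delta t\,\mc{A}\LRp{\mc{C}+1}\mc{C}^{k-1}$.

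The hard part will be the bookkeeping of the coupling through the single-valued trace $\phihk$: because the gravity waves travel in both normal directions, $\phihk$ mixes the element and its neighbour, so after summation the iteration-$k$ terms must be regrouped over the skeleton rather than propagated along a flow direction. The cross products $\LRa{\phikp,\velkp\cdot\n}_\pK$ generated both by the flux and by expanding $\nor{\phihk}_\pK^2$ do not cancel face-by-face, and it is their careful regrouping, combined with the specific Young weights, that makes the leftover constants collapse to exactly $\mc{A}$ and $\mc{B}$ (in particular the $-\Phi/2$ and $-\sqrt\Phi/2$ shifts hidden in the two entries of $\mc{B}$). Establishing $\mc{B}>0$ and $\mc{C}<1$ then pins down the admissible relation between $h$, $\Delta t$, $\p$ and $\Phi$ assumed in the statement.
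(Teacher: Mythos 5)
Your proposal follows essentially the same route as the paper's proof: the same test functions yielding the identical per-element energy identity, the same Cauchy--Schwarz/Young splitting of the trace coupling into iteration-$k$ and iteration-$(k+1)$ skeleton contributions, the same inverse trace inequality to absorb the latter into the volume terms (producing exactly the two entries of $\mc{B}$), and the same recycling of the energy identity plus the skeleton contraction to get the volume bound with the factor $\Delta t\,\mc{A}\LRp{\mc{C}+1}\mc{C}^{k-1}$. The argument is correct as outlined.
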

\begin{proof}
Choosing the test functions $\varphi_1 =\phikp$, $\varphi_2 = \ukp$
and $\varphi_3 = \vkp$ in \eqnref{localSolverDD}, integrating the second term in \eqnref{localSolverPhiDD} by parts, and then summing equations in \eqnref{localSolverDD} altogether, we obtain
\begin{align}\label{SumlocalSolverTest}\nonumber
\frac{1}{\Delta t}\LRp{\phikp,\phikp}_K+\frac{\Phi}{\Delta t}\LRp{\velkp,\velkp}_K+\sqrt\Phi\LRa{\phikp,\phikp}_\pK&+\gamma\Phi\LRp{\velkp,\velkp}_\K \\
=\sqrt\Phi\LRa{\phihk,\phikp}_\pK-\Phi\LRa{\phihk,{\bf\n}\cdot{\velkp}}_\pK&.
\end{align}
Summing \eqref{SumlocalSolverTest} over all elements yields
\begin{align}\nonumber
&\sum_K\frac{1}{\Delta t}\LRp{\phikp,\phikp}_K+\frac{\Phi}{\Delta t}\LRp{\velkp,\velkp}_K+\sqrt\Phi\LRa{\phikp,\phikp}_\pK+\gamma\Phi\LRp{\velkp,\velkp}_\K\\\nonumber
&=\sum_{\pK}\sqrt\Phi\LRa{\phihk,\phikp}_\pK-\Phi\LRa{\phihk,{\bf\n}\cdot{\velkp}}_\pK\\
&=\sum_{\e\in\Gh}\LRa{2\sqrt\Phi\LRp{\average{\phik}+\sqrt{\Phi}\average{\velk\cdot \n}},\LRp{\average{\phikp}-\sqrt\Phi\average{\velkp\cdot \n}}}_\e,\nonumber
\intertext{by Cauchy-Schwarz inequality, we could bound the rhs as} \nonumber
&\leq\sum_{\e\in\Gh}\sqrt\Phi\LRp{\nor{\average{\phik}}_e^2+\nor{\average{\phikp}}_e^2}+\Phi\LRp{\nor{\average{\phik}}_e^2+\nor{\average{\velkp\cdot\n}}_e^2}\\\nonumber 
&+\Phi\LRp{\nor{\average{\phikp}}_e^2+\nor{\average{\velk\cdot\n}}_e^2}+\Phi\sqrt\Phi\LRp{\nor{\average{\velk\cdot\n}}_e^2+\nor{\average{\velkp\cdot\n}}_e^2},\\\nonumber
\intertext{with little algebraic manipulation we have} \nonumber
\label{SumK}
&\leq\sum_\pK\left[\frac{\Phi+\sqrt\Phi}{2}\LRa{\phik,\phik}_\pK+\frac{\Phi(1+\sqrt\Phi)}{2}\LRa{\velk,\velk}_\pK\right] \nonumber\\
&+\sum_\pK\left[\frac{\Phi+\sqrt\Phi}{2}\LRa{\phikp,\phikp}_\pK+\frac{\Phi(1+\sqrt\Phi)}{2}\LRa{\velkp,\velkp}_\pK\right].
\end{align}
An application of inverse trace inequality \cite{chan2016gpu} for tensor product elements gives
\begin{subequations}\label{FEMTraceIn}
\begin{align}
    \LRp{\phikp,\phikp}_K&\geq\frac{2ch}{d(\p+1)(\p+2)}\LRa{\phikp,\phikp}_\pK,\\
    \LRp{\velkp,\velkp}_K&\geq\frac{2ch}{d(\p+1)(\p+2)}\LRa{\velkp,\velkp}_\pK,
\end{align}
\end{subequations}
where $d$ is the spatial dimension which in this case is $2$ and $0<c\le1$ is a constant. For simplices we can use the trace inequalities in \cite{MR1986022} and it will change only the constants in the proof. Inequality \eqref{FEMTraceIn}, together with \eqref{SumK}, implies
 \begin{eqnarray}\label{CompareTraces}\nonumber
&&\sum_\pK\left[\left(\frac{ch}{\Delta t(\p+1)(\p+2)}+\frac{\sqrt\Phi -\Phi}{2}\right)\LRa{\phikp,\phikp}_\pK\right.\\\nonumber
&&\left.+\left(\left(\gamma+\frac{1}{\Delta t}\right)\frac{ ch}{(\p+1)(\p+2)}+\frac{(-1-\sqrt\Phi)}{2}\right)\LRa{\Phi\velkp,\velkp}_\pK\right]\\
&\leq&\sum_\pK\left[\frac{\Phi+\sqrt\Phi}{2}\LRa{\phik,\phik}_\pK+\frac{(1+\sqrt\Phi)}{2}\LRa{\Phi\velk,\velk}_\pK\right],
\end{eqnarray}
which implies
 \begin{eqnarray}\label{Contraction}\nonumber
\nor{\LRp{\phikp,\velkp}}^2_\Gh \le \mc{C}\nor{\LRp{\phik,\velk}}^2_\Gh,
\end{eqnarray}
where the constant $\mathcal{C}$ is computed as in \eqnref{ContractionConstant}. Therefore 
\begin{eqnarray}\label{TraceExponentialDecay}
\nor{\LRp{\phikp,\velkp}}^2_\Gh \le \mc{C}^{k+1}\nor{\LRp{\phi^0,\vel^0}}^2_\Gh.
\end{eqnarray}
On the other hand, inequalities \eqref{SumK} and \eqref{TraceExponentialDecay} imply
\begin{align}\nonumber
\nor{\LRp{\phikp,\velkp}}^2_{\Omega_h} &\le \Delta t \mc{A}\LRp{\mc{C}+1}\mc{C}^k\nor{\LRp{\phi^0,\vel^0}}^2_\Gh
\end{align}
and this ends the proof.
\end{proof}
\begin{remark}
\label{Shallow-water-remark}
The above theorem implies that, in order to have a convergent
algorithm, we need to have the following relation between $\Delta t$, $p$,
and $h$ 
	$$\Delta t=\mc{O}\LRp{\frac{h}{\Phi(p+1)(p+2)}}.$$
Unlike the convergent result in 
Theorem \theoref{DDMConvergence} for scalar hyperbolic equation, the finding in Theorem
\theoref{ThmConvergenceShallowWater} shows that iHDG is conditionally
convergent for a system of hyperbolic equations. More specifically, the
convergence rate depends on the mesh size $h$, the time step $\Delta
t$, and the solution order $\p$. This will be confirmed by numerical
results in Section \secref{numerics}.
\end{remark}

To show the convergence of the $k$th iterative solution to the exact
solution $\LRp{\phie,\Phi\vele}$, we assume that the exact solution is
smooth, i.e.  $\eval{\LRp{\phie,\Phi\vele}}_\K \in \LRs{H^{s}\LRp{\K}}^3, s >
3/2$. If we define 
\[
\mc{E}^e\LRp{t} := \sum_\K \nor{\phie\LRp{t}}_{H^{s}\LRp{\K}}^2 + \nor{\vele\LRp{t}}_{\Phi,H^{s}\LRp{\K}}^2,
\]
results from \cite{bui2016construction} show that, for $\gamma > 0$ and $\sigma = \min\LRc{\p+1,s}$, we have
\begin{equation}
\eqnlab{HDGconvergenceshallow}
\nor{\LRp{\phi-\phie, \vel - \vele}}_{\Omega_h}^2 \le C \Delta t\frac{h^{2\sigma - 1}}{\p^{2s-1}}\mc{E}^e\LRp{m\Delta t},
\end{equation}
at the $m$th time step.

\begin{corollary}
Suppose the exact solution satisfies $\eval{\LRp{\phie,\Phi\vele}}_\K \in
\LRs{H^{s}\LRp{\K}}^3, s > 3/2$, then there exists a constant $C$
independent of $k,h$ and $\p$ such that 
\[
\nor{\LRp{\phik-\phie, \velk - \vele}}_{\Omega_h}^2 \le C\Delta t\LRp{\mc{A}\LRp{\mc{C}+1}\mc{C}^{k-1} \nor{\LRp{\phi^0,\vel^0}}^2_{\Gh} +\frac{h^{2\sigma - 1}}{\p^{2s-1}}\mc{E}^e\LRp{m\Delta t}},
\]
with $\sigma = \min\LRc{\p+1,s}$.
\end{corollary}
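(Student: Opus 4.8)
The plan is to mirror the proof of the transport-equation corollary: split the total error into an iteration error and a discretization error via the triangle inequality, bound the former with Theorem \theoref{ThmConvergenceShallowWater} and the latter with the HDG a priori estimate \eqnref{HDGconvergenceshallow}, and then recombine. Concretely, let $\LRp{\phi,\vel}$ denote the fully converged HDG solution, i.e. the fixed point of the iHDG iteration \eqnref{localSolverDD}. Writing $\phik - \phie = \LRp{\phik - \phi} + \LRp{\phi - \phie}$ and likewise for the velocity, the triangle inequality gives
\[
\nor{\LRp{\phik-\phie, \velk - \vele}}_{\Omega_h}^2 \le 2\nor{\LRp{\phik-\phi, \velk - \vel}}_{\Omega_h}^2 + 2\nor{\LRp{\phi-\phie, \vel - \vele}}_{\Omega_h}^2,
\]
so it suffices to control the two terms on the right separately, absorbing the factor $2$ into the generic constant $C$.

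First I would treat the iteration error $\LRp{\phik - \phi, \velk - \vel}$. Because the local solves \eqnref{localSolverDD} and the trace update are linear and $\LRp{\phi,\vel}$ is their fixed point, the difference $\LRp{\phik - \phi, \velk - \vel}$ satisfies \emph{exactly} the homogeneous iHDG iteration analyzed in Theorem \theoref{ThmConvergenceShallowWater}, now driven by the initial error $\LRp{\phi^0 - \phi, \vel^0 - \vel}$. Under the hypotheses $\mc{B} > 0$ and $\mc{C} < 1$ already in force there, the theorem yields
\[
\nor{\LRp{\phik-\phi, \velk - \vel}}_{\Omega_h}^2 \le \Delta t\, \mc{A}\LRp{\mc{C}+1}\mc{C}^{k-1}\nor{\LRp{\phi^0,\vel^0}}^2_\Gh,
\]
where the fixed contribution of $\LRp{\phi,\vel}$ to the initial error is folded into $C$ (equivalently, $\LRp{\phi^0,\vel^0}$ is read as the initial iteration error, exactly as $\u^0$ is used in Theorem \theoref{ThmConvergenceShallowWater}). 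The discretization error $\nor{\LRp{\phi-\phie, \vel - \vele}}_{\Omega_h}^2$ is then bounded directly by \eqnref{HDGconvergenceshallow}, which for $\gamma > 0$ and $\sigma = \min\LRc{\p+1,s}$ supplies the factor $C\Delta t\,\frac{h^{2\sigma - 1}}{\p^{2s-1}}\mc{E}^e\LRp{m\Delta t}$. Substituting both bounds and collecting the common $\Delta t$ produces the claimed estimate.

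The only genuinely delicate point — everything else is bookkeeping — is the consistency step: one must verify that the converged HDG solution $\LRp{\phi,\vel}$ is indeed the fixed point of the iHDG map, so that subtracting it removes the forcing, boundary, and initial data and leaves precisely the homogeneous iteration to which Theorem \theoref{ThmConvergenceShallowWater} applies. This holds because at a fixed point the local equations \eqnref{localSolverDD} and the trace update reproduce the underlying HDG local equations and conservation conditions; however, unlike the scalar transport case, one should check explicitly that the coupling between $\phi$ and $\vel$ through the Coriolis term $f$ and the friction term $\gamma$ also cancels under subtraction, which it does by linearity. Note finally that both contributing estimates carry the same $\Delta t$ scaling and that the iteration bound is expressed in the $\Gh$-norm of the initial data while the discretization bound is an $\Omega_h$-quantity; since the left-hand side is measured in $\nor{\cdot}_{\Omega_h}$, no further norm conversion is needed, and the two terms combine cleanly into the stated bound.
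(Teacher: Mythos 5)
Your proof is correct and follows essentially the same route as the paper, which (as in the transport-equation case) obtains the corollary directly from the triangle inequality, Theorem \theoref{ThmConvergenceShallowWater} applied to the iteration error via linearity, and the HDG a priori estimate \eqnref{HDGconvergenceshallow}. The consistency observation you flag — that the converged HDG solution is the fixed point of the iHDG map so the difference satisfies the homogeneous iteration — is exactly the reduction the paper invokes when it says it suffices to consider the homogeneous system.
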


\section{iHDG methods for convection-diffusion PDEs}
\seclab{iHDG-convection-diffusion}
\subsection{First order form}
\seclab{iHDG-first-order-CDR}
In this section we apply the iHDG algorithm \ref{al:DDMhyperbolic} to the following prototype 
convection-diffusion equation in the first order form
\begin{subequations}
\eqnlab{Convection-diffusion-eqn}
\begin{align}
\kappa^{-1}\sigb^e + \Grad \u^e &= 0 \quad \text{ in } \Omega, \\ 
\Div\sigb^e +\betab \cdot \Grad \u^e +\nu \u^e &= f \quad \text{ in }
\Omega.
\end{align}
\end{subequations}
We suppose that \eqnref{Convection-diffusion-eqn} is well-posed, i.e.,
\begin{equation}
	\label{CondCoercivity}
	\nu-\frac{\Div \betab}{2}\geq \lambda > 0.
\end{equation}
Moreover, we restrict ourselves to a constant diffusion coefficient $\kappa$. 
An upwind HDG numerical flux \cite{Bui-Thanh15} is given by
\[
\Fh\cdot \n = 
\LRs{
\begin{array}{c}
\uh \n_1
\\ \uh \n_2
\\ \uh \n_3
\\ \sigb \cdot \n + \betab \cdot \n \u + \tau \LRp{\u -\uh}
\end{array}
}.
\]
Strongly enforcing the conservation condition yields
\[
\uh=\frac{1}{\tau^{+}+\tau^{-}} \LRp{\jump{\sigb\cdot\n}+\jump{\betab\cdot\n \u}+\jump{\tau \u}},
\]
with $\tau$ being chosen as
\begin{equation}
\label{general_flux_convdiff}
\tau^{\pm}=\frac{\gamma}{2}\LRp{\alpha-\betab\cdot\n^{\pm}}.
\end{equation}
where $\gamma=1$ and $\alpha=\sqrt{|\betab\cdot\n|^2+4}$ for the upwind flux in \cite{Bui-Thanh15}. We 
see that $\tau^{\pm}$ in general is a function which depends upon $\betab$ and is always positive. Similar to the previous 
sections, it is sufficient to
consider the homogeneous problem. Applying the iHDG algorithm
\ref{al:DDMhyperbolic} with $\taub,\v$ as test functions we have the following iterative scheme

\begin{subequations}
\begin{align}
\label{ErrorDDMCD1}
\kappa^{-1}\LRp{\sigbkp,\taub}_K-\LRp{\ukp,\nabla\cdot\taub}_K+\LRa{\uhk,\taub\cdot\n}_\pK&=0,\\
\label{ErrorDDMCD2}
-\LRp{\sigbkp,\nabla\v}_K-\LRp{\ukp,\nabla\cdot\LRp{\betab\v}-\nu{\v}}_K&+\nonumber\\
\LRa{\sigbkp\cdot\n+\betab\cdot\n\ukp+\tau(\ukp-\uhk),\v}_\pK&=0,
\end{align}
\end{subequations}
where 
$$\uhk=\frac{\jump{\sigbk\cdot\n}+\jump{\betab\cdot\n \uk}+\jump{\tau \uk}}{\sqrt{|\betab\cdot\n|^2+4}}.$$
For $\varepsilon$, $h>0$ and $0<c\le1$ given, define
\begin{align}\label{DefineC1}
\mathcal{C}_1&:=\frac{3(\|\betab\cdot{\bf n}\|_{L^\infty(\pK)}^{2}+\bar\tau^{2})(\bar\tau\varepsilon+1)}{2\varepsilon}, \, \mathcal{C}_2:=\frac{3(\bar\tau\varepsilon+1)}{2\varepsilon},\\\label{DefineC2}
    \mathcal{C}_3&:=\frac{\bar\tau+\|\betab\cdot{\bf n}\|_{L^\infty(\pK)}}{2}, \, \mathcal{C}_4:=\frac{\varepsilon}{2},\\\label{DefineD}
    \mc{D}&:= \frac{\mc{A}}{\mc{B}}, \, \mc{A}={\max\{\mathcal{C}_1,\mathcal{C}_2\}}, \, \mc{E}:=\frac{\max\{\mathcal{C}_3,\mathcal{C}_4\}}{\min\{\kappa^{-1},\lambda\}}, \, \mc{F}:=\frac{\mc{A}}{\min\{\kappa^{-1},\lambda\}},\\\label{DefineB} 
    \mc{B}&:={\min\{\frac{2ch\kappa^{-1}}{d(p+1)(p+2)}-\mathcal{C}_4, \frac{2ch\lambda}{d(p+1)(p+2)}+\taustar-\mathcal{C}_3\}}.
\end{align}
As in the previous section we need the following norms 
\begin{align}
\nor{\LRp{\sigb^k,\uk}}^2_{\Omega_h}&:=\nor{\sigb^k}_{\Omega_h}^2 + \nor{\uk}_{\Omega_h}^2, \quad \nor{\LRp{\sigb^k,\uk}}^2_\Gh&:=\nor{\sigb^k}_{\Gh}^2 + \nor{\uk}_{\Gh}^2.\nonumber
\end{align}

\begin{theorem} 
\theolab{ThmUpwindDDM}
Suppose that the mesh size $h$ and the solution order $p$ are chosen such that  $\mc{B}>0$ and $\mc{D}<1$, 
the algorithm  \eqref{ErrorDDMCD1}-\eqref{ErrorDDMCD2} converges in the following sense
\begin{eqnarray}\nonumber
\nor{\LRp{\sigb^{k},\uk}}^2_\Gh \le \mc{D}^{k}\nor{\LRp{\sigb^{0},\u^{0}}}^2_\Gh, \quad
\nor{\LRp{\sigb^{k},\uk}}^2_{\Omega_h} \le (\mc{E}\mc{D}+\mc{F})\mc{D}^{k-1}\nor{\LRp{\sigb^{0},\u^{0}}}^2_\Gh, 
\end{eqnarray}
where $\mc{D}, \mc{E}$ and $\mc{F}$ are defined in \eqref{DefineD}. 
\end{theorem}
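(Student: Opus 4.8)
The plan is to reproduce the energy argument of Theorem \theoref{ThmConvergenceShallowWater}, now carrying both the flux $\sigb$ and the scalar $\u$. First I would test \eqref{ErrorDDMCD1} with $\taub=\sigbkp$ and \eqref{ErrorDDMCD2} with $\v=\ukp$, integrate $\LRp{\ukp,\Div\sigbkp}_\K$ by parts, and add the two identities. The volume pairing $\LRp{\Grad\ukp,\sigbkp}_\K$ cancels against the one coming from \eqref{ErrorDDMCD2}, and the boundary pairing $\LRa{\ukp,\sigbkp\cdot\n}_\pK$ cancels against $\LRa{\sigbkp\cdot\n,\ukp}_\pK$, leaving on each element $\kappa^{-1}\nor{\sigbkp}_\K^2+\nu\nor{\ukp}_\K^2-\LRp{\ukp,\Div\LRp{\betab\ukp}}_\K$ together with boundary terms and the coupling to the previous-iteration trace $\uhk$ on the right.

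Next I would invoke the transport identity \eqnref{Identity} to replace $\LRp{\ukp,\Div\LRp{\betab\ukp}}_\K$ by $\LRp{\tfrac{\Div\betab}{2}\ukp,\ukp}_\K+\half\LRa{\betab\cdot\n\,\ukp,\ukp}_\pK$. Combined with the well-posedness hypothesis \eqref{CondCoercivity} this bounds the volume part below by $\kappa^{-1}\nor{\sigbkp}_\K^2+\lambda\nor{\ukp}_\K^2$, while the surviving boundary term becomes $\LRa{\LRp{\tau+\half\betab\cdot\n}\ukp,\ukp}_\pK$, which is nonnegative because $\tau+\half\betab\cdot\n=\half\sqrt{\snor{\betab\cdot\n}^2+4}\ge 1$ and is bounded below by $\taustar$. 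The right-hand coupling $-\LRa{\uhk,\sigbkp\cdot\n}_\pK+\LRa{\tau\uhk,\ukp}_\pK$ I would estimate by the weighted Cauchy--Schwarz (Young) inequality with parameter $\varepsilon$ and the bound $\tau\le\bar\tau$: the current-iteration pieces, with coefficients $\mc{C}_3$ and $\mc{C}_4$, move to the left, and a multiple of $\nor{\uhk}_\pK^2$ stays on the right.

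I would then insert the explicit formula for $\uhk$ and, applying $\LRp{a+b+c}^2\le 3\LRp{a^2+b^2+c^2}$ to its three jump contributions $\jump{\sigbk\cdot\n}$, $\jump{\betab\cdot\n\,\uk}$, $\jump{\tau\uk}$, bound $\nor{\uhk}_\pK^2$ by $\nor{\sigbk}_\pK^2$ and $\nor{\uk}_\pK^2$ with the weights recorded in $\mc{C}_1$ and $\mc{C}_2$. Summing over all elements turns the element boundary integrals into skeleton norms on $\Gh$. The decisive step is the inverse trace inequality \eqref{FEMTraceIn} applied to $\nor{\sigbkp}_\K^2$ and $\nor{\ukp}_\K^2$, which converts the coercive volume factors $\kappa^{-1}$ and $\lambda$ into $\tfrac{2ch\kappa^{-1}}{d(\p+1)(\p+2)}$ and $\tfrac{2ch\lambda}{d(\p+1)(\p+2)}$ multiplying the left-hand skeleton norms. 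Collecting everything gives $\mc{B}\,\nor{\LRp{\sigbkp,\ukp}}_\Gh^2\le\mc{A}\,\nor{\LRp{\sigbk,\uk}}_\Gh^2$ with $\mc{A},\mc{B}$ as in \eqref{DefineD}--\eqref{DefineB}; under the hypotheses $\mc{B}>0$ and $\mc{D}=\mc{A}/\mc{B}<1$, telescoping this contraction (exactly as in Theorem \theoref{ThmConvergenceShallowWater}) yields the skeleton bound, and substituting it back into the energy inequality before the inverse-trace step recovers the volume estimate with the constants $\mc{E}$ and $\mc{F}$.

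The main obstacle I anticipate is the constant bookkeeping in the Young steps. The single free weight $\varepsilon$ must be chosen so that both surviving left-hand skeleton coefficients, $\tfrac{2ch\kappa^{-1}}{d(\p+1)(\p+2)}-\mc{C}_4$ and $\tfrac{2ch\lambda}{d(\p+1)(\p+2)}+\taustar-\mc{C}_3$, stay positive (this is precisely the condition $\mc{B}>0$, which is what forces the mesh-size/order restriction) while the ratio $\mc{D}=\mc{A}/\mc{B}$ stays below one. Unlike the shallow water system, here $\sigb$ and $\u$ are coupled through every term of $\uhk$, so the $\varepsilon$-splitting must simultaneously absorb the $\sigbkp\cdot\n$ contribution into $\mc{C}_4=\varepsilon/2$ and keep the coefficient of $\nor{\uhk}_\pK^2$ small enough; verifying that one $\varepsilon$ makes $\mc{B}$ positive and $\mc{D}<1$ for suitable $h$ and $\p$ is the crux, after which the iteration is routine.
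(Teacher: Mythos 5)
Your proposal is correct and follows essentially the same route as the paper's proof: test with $\LRp{\sigbkp,\ukp}$, cancel the mixed volume and boundary pairings, apply the identity \eqnref{Identity} and the coercivity condition \eqref{CondCoercivity}, absorb the $\uhk$-coupling via Young's inequality with parameter $\varepsilon$, bound $\nor{\uhk}_{\pK}^2$ by the three-term Cauchy--Schwarz estimate on the jump formula, sum over elements, and convert volume coercivity into skeleton control with the inverse trace inequality \eqref{FEMTraceIn} before telescoping. The only cosmetic difference is your observation that $\tau+\tfrac{1}{2}\betab\cdot\n=\tfrac{1}{2}\sqrt{\snor{\betab\cdot\n}^2+4}$, where the paper instead keeps $\taustar$ on the left and moves the $\betab\cdot\n$ contribution to the right; both yield the same constants $\mc{C}_3$, $\mc{B}$, and $\mc{D}$.
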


\begin{proof}
Choosing $\sigbkp$ and $\ukp$ as test functions in
\eqref{ErrorDDMCD1}-\eqref{ErrorDDMCD2}, integrating the second term
in \eqref{ErrorDDMCD1} by parts, using \eqnref{Identity} for second term in \eqref{ErrorDDMCD2}, 
and then summing up the resulting two equations we get
\begin{eqnarray}\nonumber
	&&\kappa^{-1}\LRp{\sigb^{k+1},\sigb^{k+1}}_K+\LRp{\frac{-\Div\betab}{2}\ukp,\ukp}_K+\nu\LRp{\ukp,{\ukp}}_K-\frac{1}{2}\LRa{\betab\cdot{\bf n} \ukp,\ukp}_\pK\\\label{ErrorDDMCD4}
& &
+\LRa{\hat{\u}^{k},\sigb^{k+1}\cdot{\bf n}}_\pK+\LRa{\betab\cdot{\bf n}\ukp+\tau(\ukp-\hat{\u}^{k}),\ukp}_\pK=0.
\end{eqnarray}
Due to the condition \eqref{CondCoercivity}
\begin{eqnarray}\nonumber
	&&\kappa^{-1}\LRp{\sigb^{k+1},\sigb^{k+1}}_K+\lambda\LRp{\ukp,\ukp}_K+\LRa{\tau\ukp,\ukp}_\pK \\\label{ErrorDDMCD5}
& &
\leq\LRa{\tau\hat{\u}^{k},\ukp}_\pK-\frac{1}{2}\LRa{\betab\cdot{\bf n} \ukp,\ukp}_\pK-\LRa{\hat{\u}^{k},\sigb^{k+1}\cdot{\bf n}}_\pK.
\end{eqnarray}

By Cauchy-Schwarz and Young's inequalities and the fact that $|\betab\cdot{\bf n}|\leq \|\betab\cdot{\bf n}\|_{L^\infty(\pOmega_h)}$ and let $\bar\tau := \|\tau\|_{L^\infty(\pOmega_h)}$, $\taustar := \inf\limits_{\pK \in \pOmega_h}\tau$,
\begin{eqnarray}\nonumber
    &&\kappa^{-1}\|\sigb^{k+1}\|_{L^2(K)}^2+\lambda\|\ukp\|^2_{L^2(K)}+\taustar\|\ukp\|^2_{L^2(\pK)}\\\nonumber
&\leq&\frac{\bar\tau\varepsilon +1}{2\varepsilon}\|\hat\u^{k}\|_{L^2(\pK)}^2+\frac{\bar\tau+\|\betab\cdot{\bf n}\|_{L^\infty(\pK)}}{2}\|\ukp\|_{L^2(\pK)}^2+\frac{\varepsilon}{2}\|\sigb^{k+1}\|_{L^2(\pK)}^2.
\end{eqnarray}
Therefore
\begin{eqnarray}\nonumber
\label{ErrorDDMCD7}
&&\sum_K\LRs{\kappa^{-1}\|\sigb^{k+1}\|_{L^2(K)}^2+\lambda\|\ukp\|^2_{L^2(K)}+\taustar\|\ukp\|^2_{L^2(\pK)}}\leq\sum_\pK\frac{\bar\tau\varepsilon +1}{2\varepsilon}\|\hat\u^{k}\|_{L^2(\pK)}^2\\
       &&
+\frac{\bar\tau+\|\betab\cdot{\bf n}\|_{L^\infty(\pK)}}{2}\|\ukp\|_{L^2(\pK)}^2+\frac{\varepsilon}{2}\|\sigb^{k+1}\|_{L^2(\pK)}^2.
\end{eqnarray}
By Cauchy-Schwarz inequality
\begin{eqnarray*}\nonumber
&&\|\hat{\u}^{k}\|_{L^2(\pK)}^2
\leq\frac{3\|\jump{\sigb^{k}\cdot\n}\|_{L^2(\pK)}^2+3\|\jump{\betab\cdot\n \uk}\|_{L^2(\pK)}^2+3\|\jump{\tau \uk}\|_{L^2(\pK)}^2}{{4}},
\end{eqnarray*}
which implies
\begin{eqnarray}
\label{ErrorDDMCD8}
&&\sum_\pK\|\hat{\u}^{k}\|_{L^2(\pK)}^2\leq\sum_\pK{3\|\sigb^{k}\|_{L^2(\pK)}^2+{3(\|\betab\cdot{\bf n}\|_{L^\infty(\pK)}^{2}+\bar\tau^{2})}\|{ \uk}\|_{L^2(\pK)}^2}.
\end{eqnarray}
Combining \eqref{ErrorDDMCD7} and \eqref{ErrorDDMCD8}, we get
\begin{eqnarray}\nonumber
	&&\sum_K\LRs{\kappa^{-1}\|\sigb^{k+1}\|_{L^2(K)}^2+\lambda\|\ukp\|^2_{L^2(K)}+\taustar\|\ukp\|^2_{L^2(\pK)}}\\\nonumber
 &\leq&\sum_\pK\LRs{\frac{3(\bar\tau\varepsilon+1)}{2\varepsilon}{\|\sigb^{k}\|_{L^2(\pK)}^2+\frac{3(\|\betab\cdot{\bf n}\|_{L^\infty(\pK)}^{2}+\bar\tau^{2})(\bar\tau\varepsilon+1)}{2\varepsilon}\|{ \uk}\|_{L^2(\pK)}^2}\right.\\\label{ErrorDDMCD9}
 & &\left.+\frac{\bar\tau+\|\betab\cdot{\bf n}\|_{L^\infty(\pK)}}{2}\|\u^{k+1}\|_{L^2(\pK)}^2+\frac{\varepsilon}{2}\|\sigb^{k+1}\|_{L^2(\pK)}^2}.
\end{eqnarray}
By the inverse trace inequality \eqref{FEMTraceIn} we infer from \eqref{ErrorDDMCD9} that
\begin{eqnarray}\nonumber
    &&\sum_\pK\LRs{(\frac{2ch\kappa^{-1}}{d(p+1)(p+2)}-\mathcal{C}_4)\|\sigb^{k+1}\|_{L^2(\pK)}^2+(\frac{2ch\lambda}{d(p+1)(p+2)}
+\taustar-\mathcal{C}_3)\|\ukp\|^2_{L^2(\pK)}}\\ \nonumber
    &&\leq\sum_\pK\LRs{\mathcal{C}_1\|{ \uk}\|_{L^2(\pK)}^2+\mathcal{C}_2\|\sigb^{k}\|_{L^2(\pK)}^2}, \nonumber
\end{eqnarray}
which implies
 \begin{eqnarray}\nonumber
     \nor{\LRp{\sigb^{k+1},\ukp}}^2_\Gh \le \mc{D}\nor{\LRp{\sigb^{k},\uk}}^2_\Gh,
\end{eqnarray}
where the constant $\mc{D}$ is computed as in \eqref{DefineD}.
Therefore 
\begin{eqnarray}\label{TraceExponentialDecay_CD}
    \nor{\LRp{\sigb^{k+1},\ukp}}^2_\Gh \le \mc{D}^{k+1}\nor{\LRp{\sigb^{0},\u^{0}}}^2_\Gh.
\end{eqnarray}
Inequalities \eqref{ErrorDDMCD9} and \eqref{TraceExponentialDecay_CD} imply
\begin{eqnarray}\nonumber
    \nor{\LRp{\sigb^{k+1},\ukp}}^2_{\Omega_h} \le (\mc{E}\mc{D}+\mc{F})\mc{D}^{k}\nor{\LRp{\sigb^{0},\u^{0}}}^2_\Gh,
\end{eqnarray}
and this concludes the proof.
\end{proof}
\begin{remark}
\label{time-conv-diff}
For time-dependent convection-diffusion equation, we choose to
discretize the spatial differential operators using HDG. For the temporal
derivative, we use implicit time stepping methods, again with either backward Euler or Crank-Nicolson method
for simplicity. The iHDG approach in this case is almost identical to
the one for steady state equation except that we now have an
additional $L^2$-term $\LRp{\ukp,\v}_{\K}/\Delta t$ in the local
equation \eqref{ErrorDDMCD2}. This improves the convergence of
iHDG. Indeed, the convergence analysis is almost identical except we
now have $\lambda + 1/\Delta t$ in place of
$\lambda$.
\end{remark}

\subsection{Comment on iHDG methods for elliptic PDEs}
\seclab{convergence-condition-convdiff}
In this section we consider elliptic PDEs with $\betab = 0$, $\kappa = 1$ in \eqnref{Convection-diffusion-eqn}. First using the conditions for convergence derived in section \secref{iHDG-first-order-CDR} let us analyze the iHDG scheme with upwind flux \eqref{general_flux_convdiff}. Now for elliptic PDEs the stabilization $\tau$ for upwind flux reduces to $\tau = 1$ and this violates the condition $\mathcal{B}>0$ in Theorem \theoref{ThmUpwindDDM}. Therefore for any mesh, the iHDG scheme
with upwind flux will diverge and it is also observed in our numerical experiments. 

To fix the issue, we carry out a similar analysis as in section
\secref{iHDG-first-order-CDR}, but this time without a specific
$\tau$. Taking $\varepsilon=\frac{1}{\bar\tau}$, the condition
$\mathcal{B}>0$ dictates the following mesh dependent $\tau$ for the
convergence of the iHDG scheme
\[
    \bar\tau > \mc{O}\LRp{\frac{d(p+1)(p+2)}{4h}}.
\]
This result shows that the convergence of iHDG scheme with upwind flux
for elliptic PDEs requires mesh dependent stabilization. It is also
worth noting that this coincides with the form of stabilization used
in hybridizable interior penalty methods \cite{GanderHajian:2015:ASM}
for elliptic PDEs, even though the scheme described in
\cite{GanderHajian:2015:ASM} and iHDG are different.

Guided by the above analysis, we take $\tau = \bar\tau = \taustar = \frac{\gamma(p+1)(p+2)}{h}$,
where $\gamma>\frac{d}{4}$ is a constant we still need to enforce
$\mathcal{D}<1$ for the convergence of the iHDG scheme. Generally this requires
 $4$ conditions depending upon minimum and maximum of constants
$\mathcal{A,B}$ as in Theorem \theoref{ThmUpwindDDM}. However, the simple choice
of $\tau = \frac{\gamma(p+1)(p+2)}{h}$ makes
$\mathcal{A}=\mathcal{C}_1$ and the number of  conditions is reduced to $2$ depending on
whichever term in $\mathcal{B}$ is minimum. They are given by

\[
    h > \mc{O}\LRp{\frac{\sqrt{23\gamma d}(p+1)(p+2)}{2\sqrt{\lambda}}}
\quad \text{ or }\quad
    h > \mc{O}\LRp{\frac{\sqrt{24d}(p+1)(p+2)\gamma}{\sqrt{4\gamma-d}}}.
\]

We can see that the iHDG scheme as an iterative solver is
conditionally convergent for diffusion dominated PDEs, that is, the
mesh may not be too fine to violate the above condition on the
meshsize. This suggests that, for fine mesh problems, we can use the
iHDG algorithm as a fine scale preconditioner within a  Krylov subspace
method such as GMRES. In a forthcoming paper, we will construct a two level
preconditioner by combining iHDG with a multilevel coarse scale
preconditioner.

\section{Numerical results}
\seclab{numerics} In this section various numerical results supporting
the theoretical results are provided for the 2D and 3D transport
equations, the linearized shallow water equation, and the
convection-diffusion equation in different regimes.

\subsection{Steady state transport equation}
The goal is to verify Theorem \theoref{DDMConvergence} and Theorem
\theoref{iHDG-NPC} for the transport equation \eqnref{transport} in 2D
and 3D settings using the upwind HDG and the NPC fluxes.

\subsubsection{2D steady state transport equation with discontinuous solution}
We consider the case similar to the one in
\cite{Bui-Thanh15,MR2511736} where $f=0$
and $\betab=(1+sin(\pi y/2), 2)$ in \eqnref{transport}. The domain is
$[0,2]\times[0,2]$ and the inflow boundary conditions are given
by
\[
g = 
\left\{
\begin{array}{ll}
1 & x = 0, 0 \le y \le 2 \\
\sin^6\LRp{\pi x} & 0< x \le 1, y = 0 \\
0 & 1 \le x \le 2, y = 0
\end{array}
\right.
.
\]
To terminate the iHDG algorithm, we  use the following stopping  criteria
\begin{equation}
\eqnlab{tolerancecriteriarelerror}
\norm{u^k-u^{k-1}}_{L^2}<10^{-10},
\end{equation}
i.e., iHDG stops when there is  insignificant change between two successive iterations.

The evolution of the iterative solution  for the mesh with $1024$
elements and solution order $4$ using both upwind and NPC fluxes is shown in
figure \figref{2ddiscontiterfig2}.
In both cases, we observe that the iterative solution evolves from
inflow to outflow as the number of iterations increases. Thanks to the
built-in upwinding mechanism of the iHDG algorithm this implicit
marching is automatic, that is, we do not order the elements to march the flow direction. As can be
seen, iHDG with NPC flux converges faster (in fact in finite number of
iterations) as predicted by Theorem \theoref{iHDG-NPC}.
\begin{figure}[h!t!b!]
  \subfigure[$\uk$ at $k = 16$]{
    \includegraphics[trim=1.0cm 0.5cm 1.5cm 1cm,clip=true,width=0.3\columnwidth]{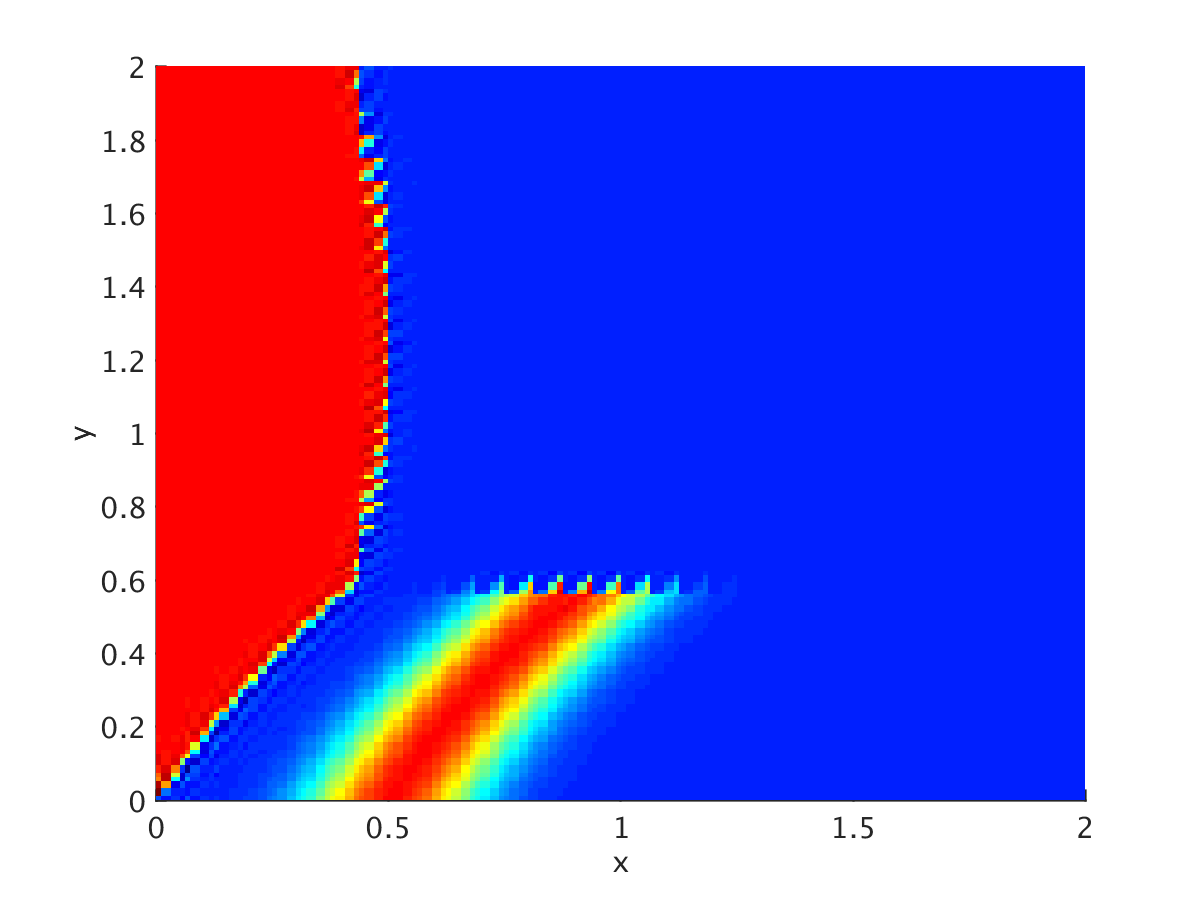}
  }
  \subfigure[$\uk$ at $k = 48$]{
    \includegraphics[trim=1.0cm 0.5cm 1.5cm 1cm,clip=true,width=0.3\columnwidth]{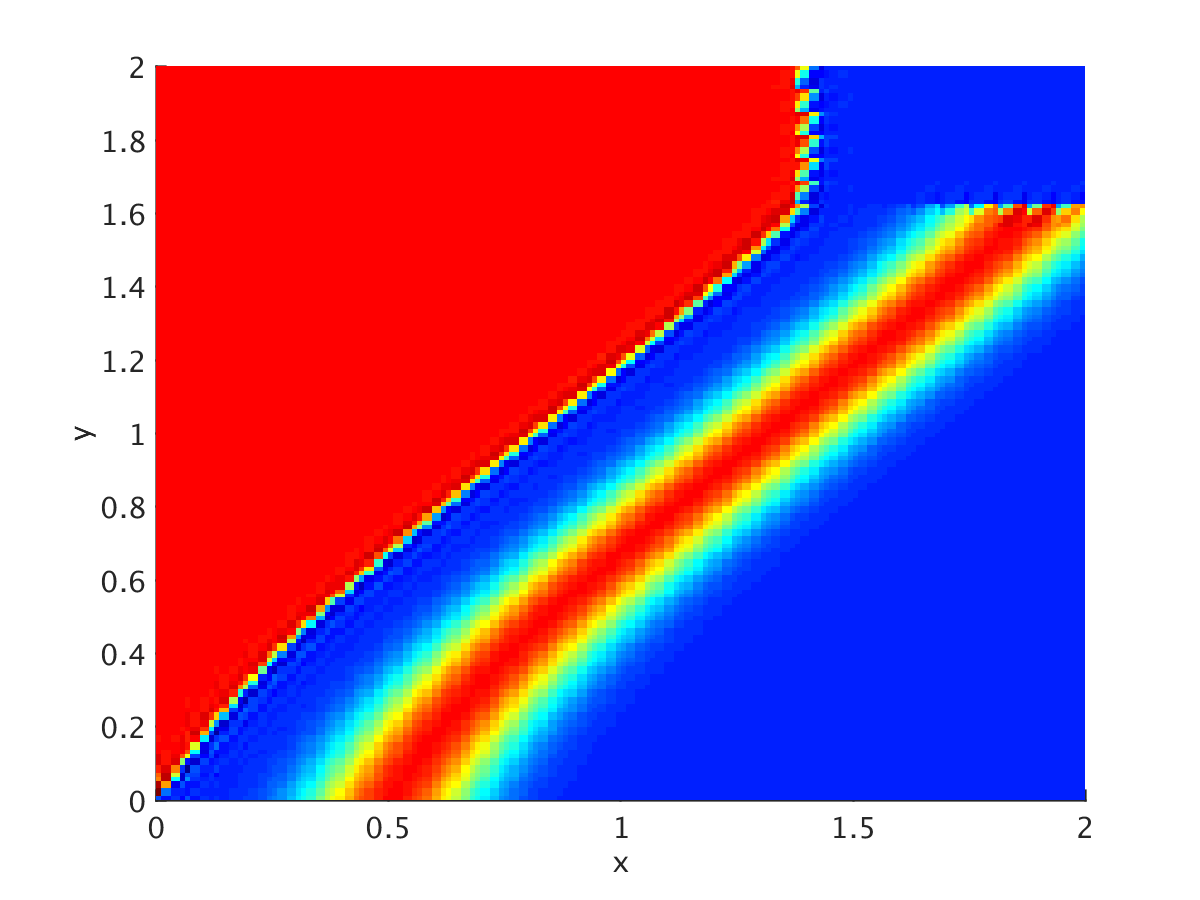}
  }
  \subfigure[$\uk$ at $k = 64$]{
    \includegraphics[trim=1.0cm 0.5cm 1.5cm 1cm,clip=true,width=0.3\columnwidth]{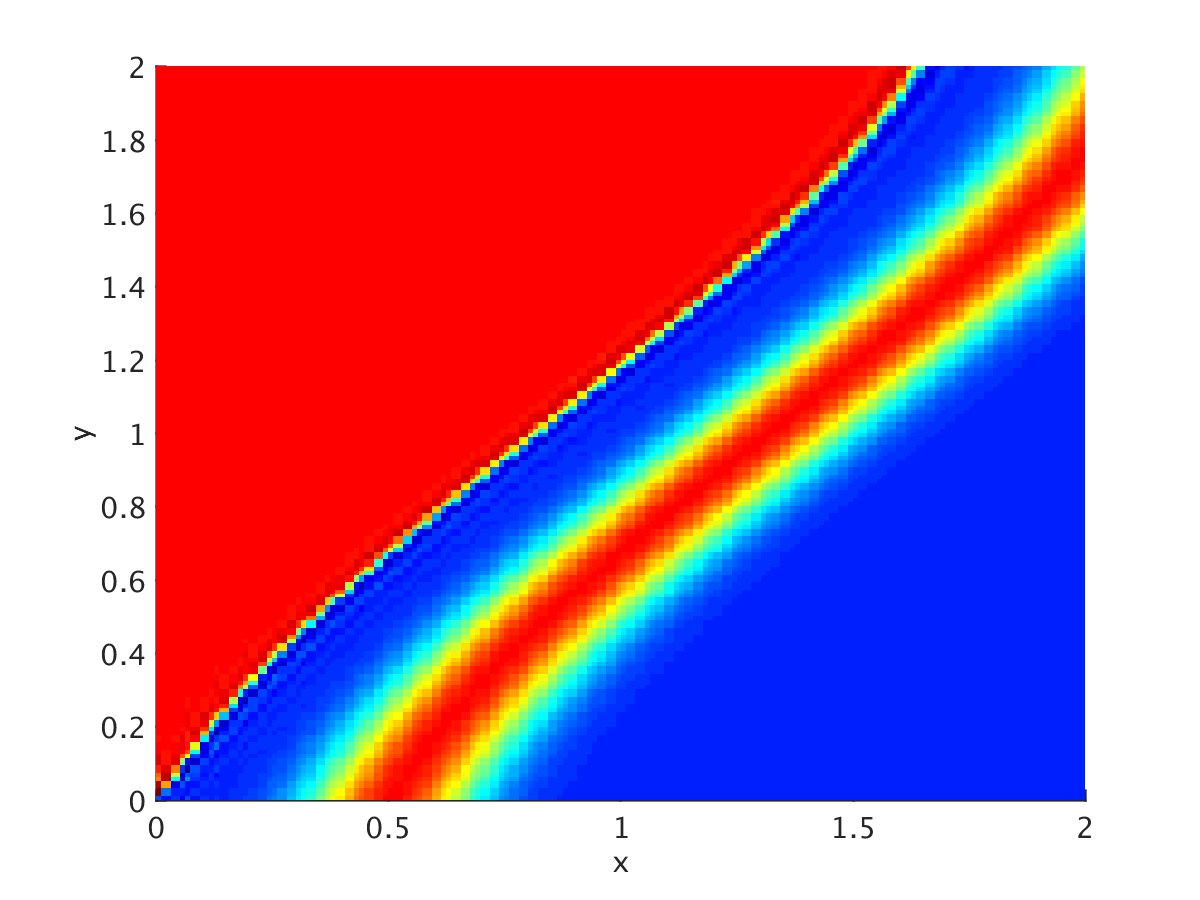}
  }
  \subfigure[$\uk$ at $k = 16$]{
    \includegraphics[trim=1.0cm 0.5cm 1.5cm 1cm,clip=true,width=0.31\columnwidth]{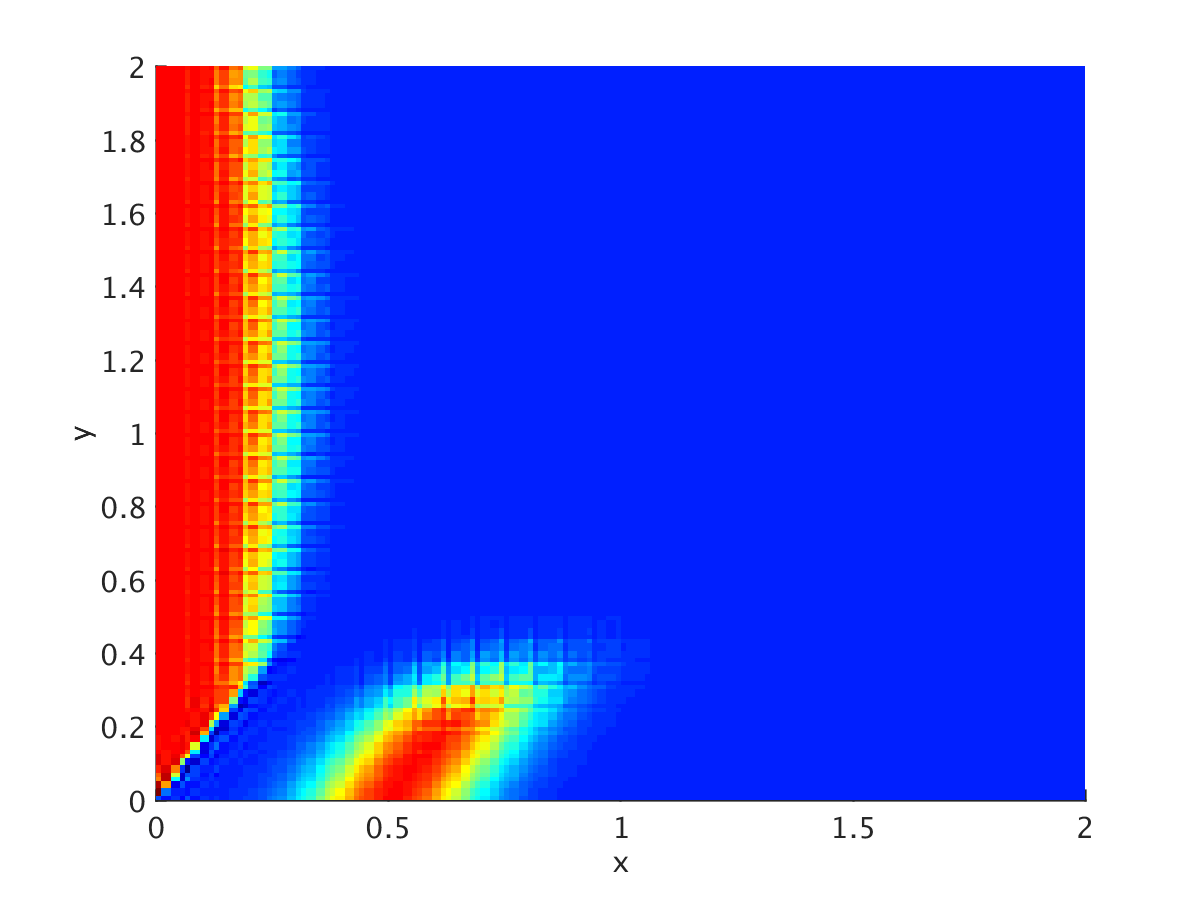}
  }
 \subfigure[$\uk$ at $k = 48$]{
    \includegraphics[trim=1.0cm 0.5cm 1.5cm 1cm,clip=true,width=0.31\columnwidth]{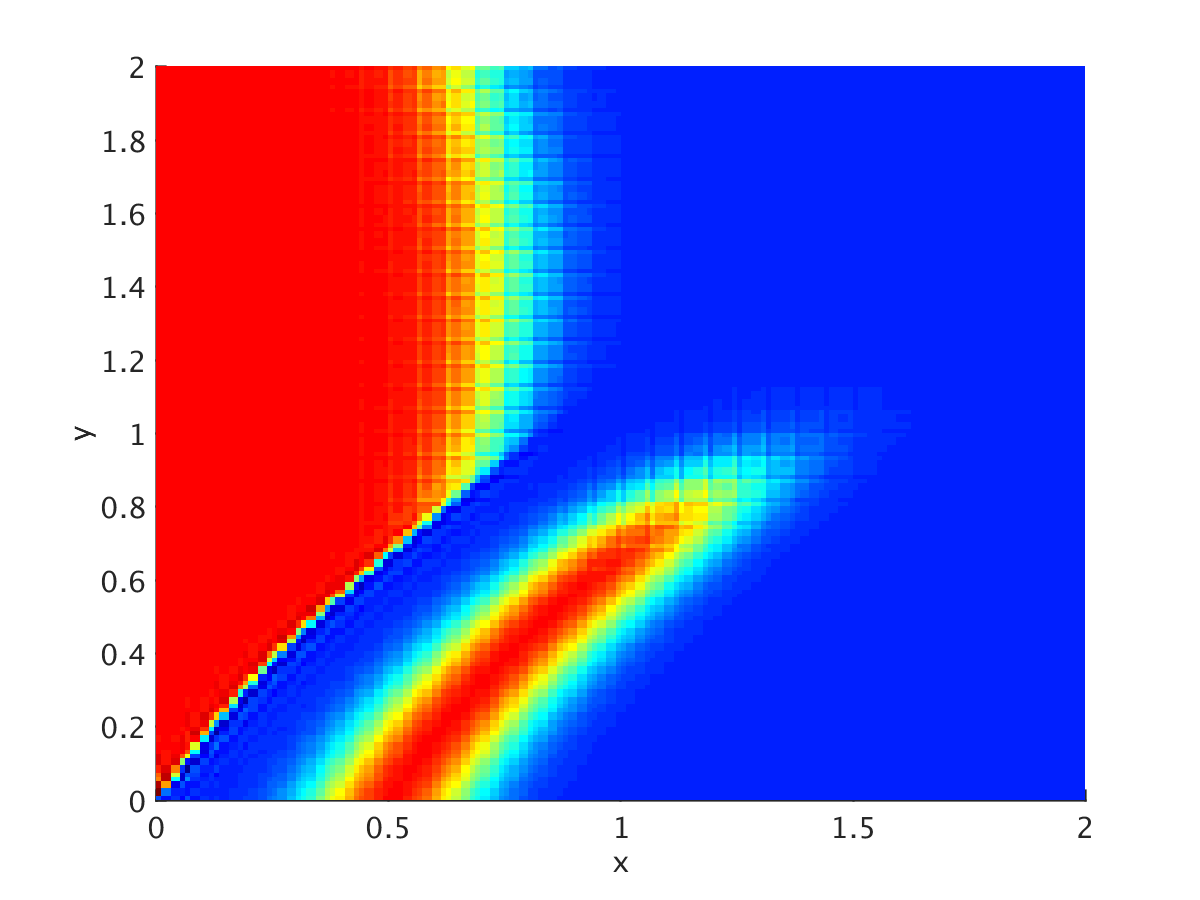}
  }
 \subfigure[$\uk$ at $k = 196$]{
    \includegraphics[trim=1.0cm 0.5cm 1.5cm 1cm,clip=true,width=0.31\columnwidth]{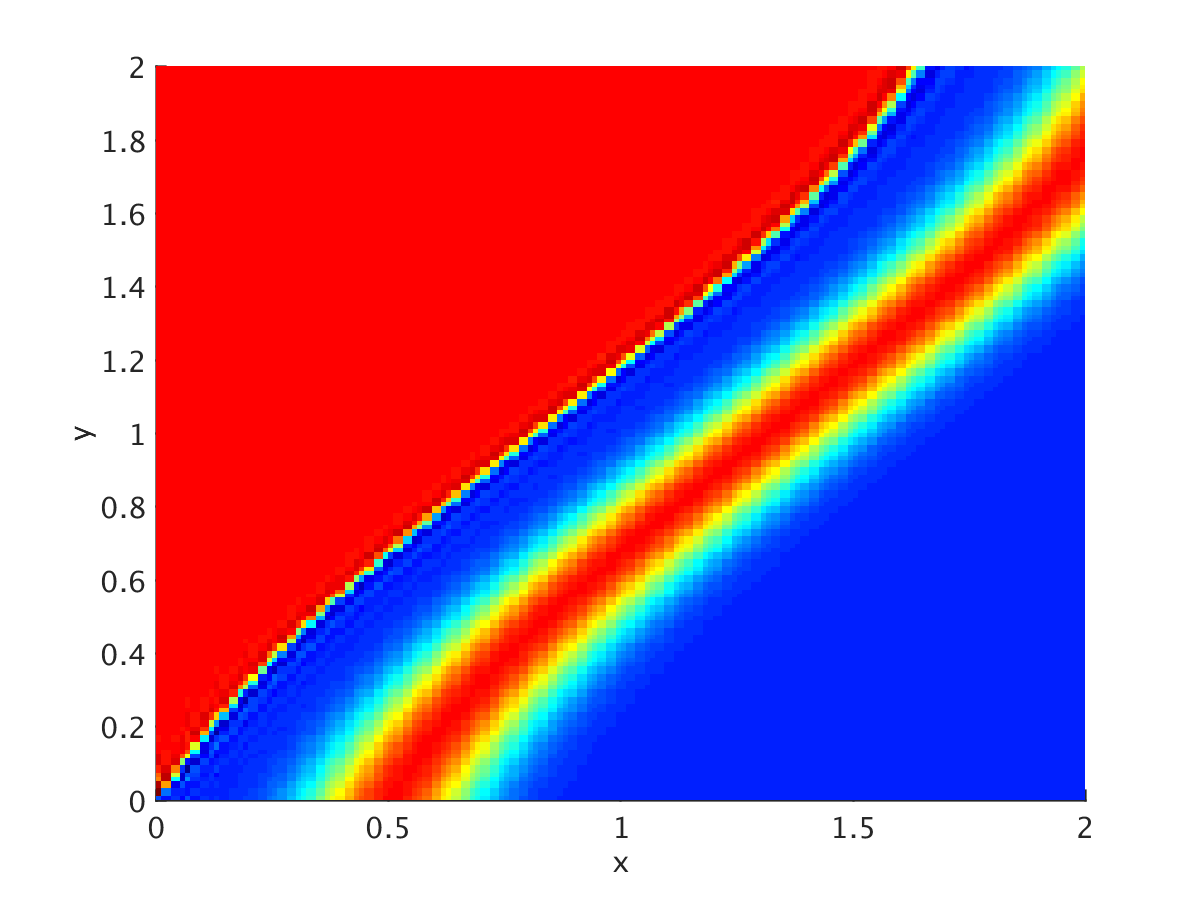}
  }
  \caption{Evolution of the iterative solution for the 2D transport equation using the NPC flux (top row) and the upwind HDG flux (bottom row).}
  \figlab{2ddiscontiterfig2}
\end{figure}

\begin{table}[h!b!t!]
\begin{center} 
\caption{The number of iterations taken by the
  iHDG algorithm using NPC and upwind HDG fluxes for the transport
  equation in 2D and 3D settings.}
\label{tab:2d_discont_3d_smooth}
\begin{tabular}{ | r || r || r || c | c | c | c | }
\hline
\multirow{2}{*}{$N_{el}$(2D)} & \multirow{2}{*}{$N_{el}$(3D)} & \multirow{2}{*}{$p$} & \multicolumn{2}{ |c| }{2D solution} & \multicolumn{2}{ |c| }{3D solution} \\
& & & Upwind & NPC & Upwind & NPC \\
\hline
16 & 8 & 3 & 65 & 9 & 39 & 7 \\
\hline
64 & 64 & 3 & 91 & 17 & 49 & 12 \\
\hline
256 & 512 & 3 & 133 & 33 & 79 & 23 \\
\hline
1024 & 4096 & 3 & 209 & 65 & 136 & 47 \\
\hline
\hline
16 & 8 & 4 & 65 & 9 & 35 & 6 \\
\hline
64 & 64 & 4 & 87 & 17 & 51 & 12 \\
\hline
256 & 512 & 4 & 129 & 33 & 83 & 24 \\
\hline
1024 & 4096 & 4 & 196 & 64 & 143 & 48 \\
\hline
\end{tabular}
\end{center}
\end{table}

The $4$th and $5$th columns of Table \ref{tab:2d_discont_3d_smooth}
show the number of iterations required to converge for both the fluxes
with different meshes and solution orders $3$ and $4$. We observe that the
number of iterations is (almost) independent of solution order\footnote{The results for $p=\LRc{1,2}$
are not shown, as the number of iterations is very similar that of the $p=\LRc{3,4}$-cases.} for
both the fluxes, which is in agreement with the theoretical results in
Theorems \theoref{DDMConvergence} and \theoref{iHDG-NPC}. {\em This is
important for high-order methods, i.e., the solution order (and hence
accuracy) can be increased while keeping the number of iHDG iterations unchanged}.

\subsubsection{3D steady state transport equation with smooth solution}
\label{3D_steady_advection}
In this example we choose $\betab=(z, x, y)$ in \eqnref{transport}. Also
we take the following exact solution
$$u^e=\frac{1}{\pi}\sin(\pi x)\cos(\pi y)\sin(\pi z).$$ The forcing is
selected in such a way that it corresponds to the exact solution.
Here the domain is $[0,1]\times[0,1]\times[0,1]$ with faces $x=0$,
$y=0$ and $z=0$ as the inflow boundaries. A structured hexahedral mesh is
used for the simulations.  Since we know the exact solution we use the
following stopping criteria:
\begin{equation}
\eqnlab{tolerancecriteria}
|\norm{u^k-u^e}_{\Ltwo}-\norm{u^{k-1}-u^e}_{\Ltwo}|<10^{-10}.
\end{equation}

\begin{figure}[h!t]
\vspace{-5mm}
\subfigure[Upwind flux]{
\includegraphics[trim=2.75cm 8cm 4cm 9cm,clip=true,width=0.5\textwidth]{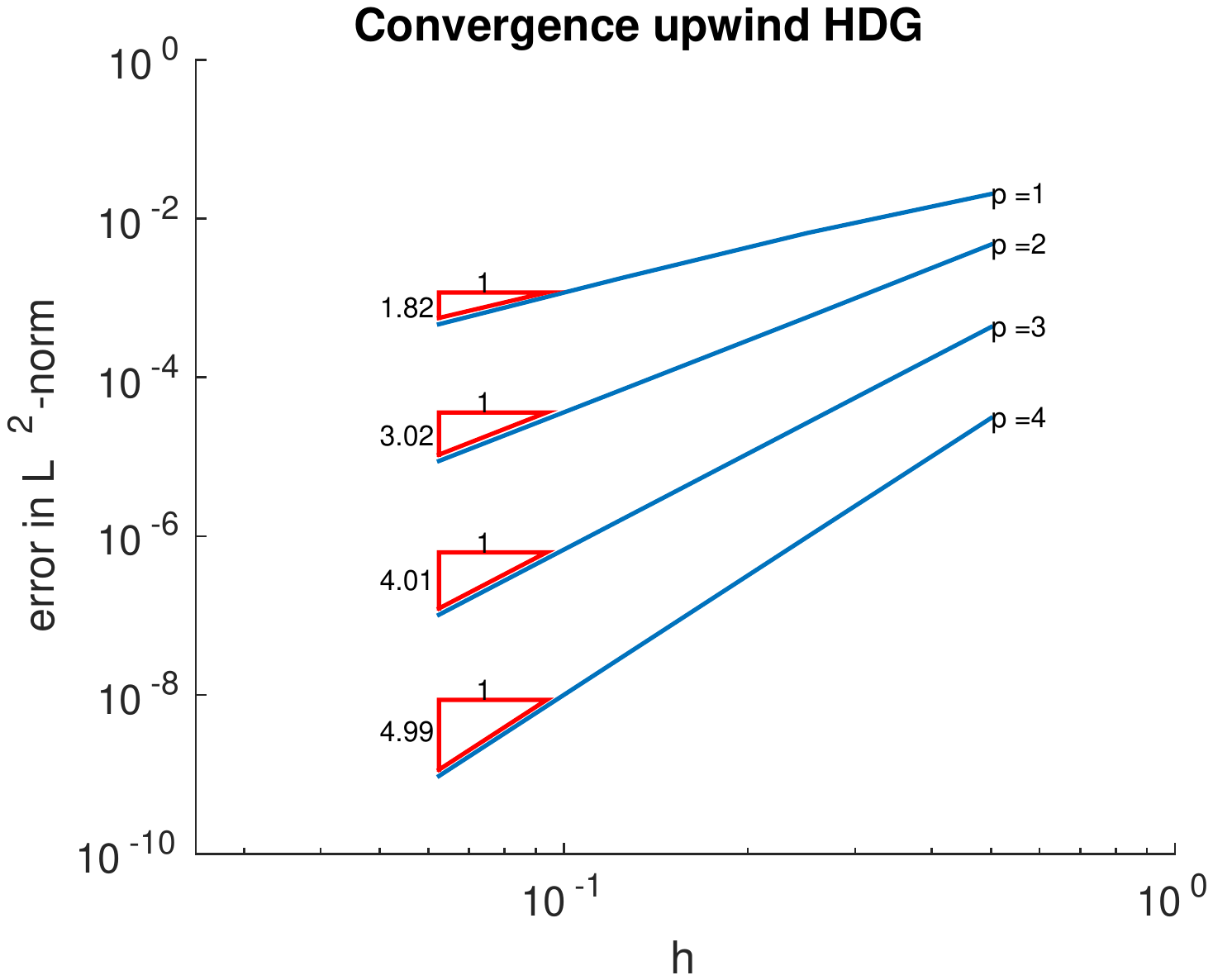}
\label{upwind_3d_advec_steady}
}
\subfigure[NPC flux]{
\includegraphics[trim=2.75cm 8cm 4cm 9cm,clip=true,width=0.5\textwidth]{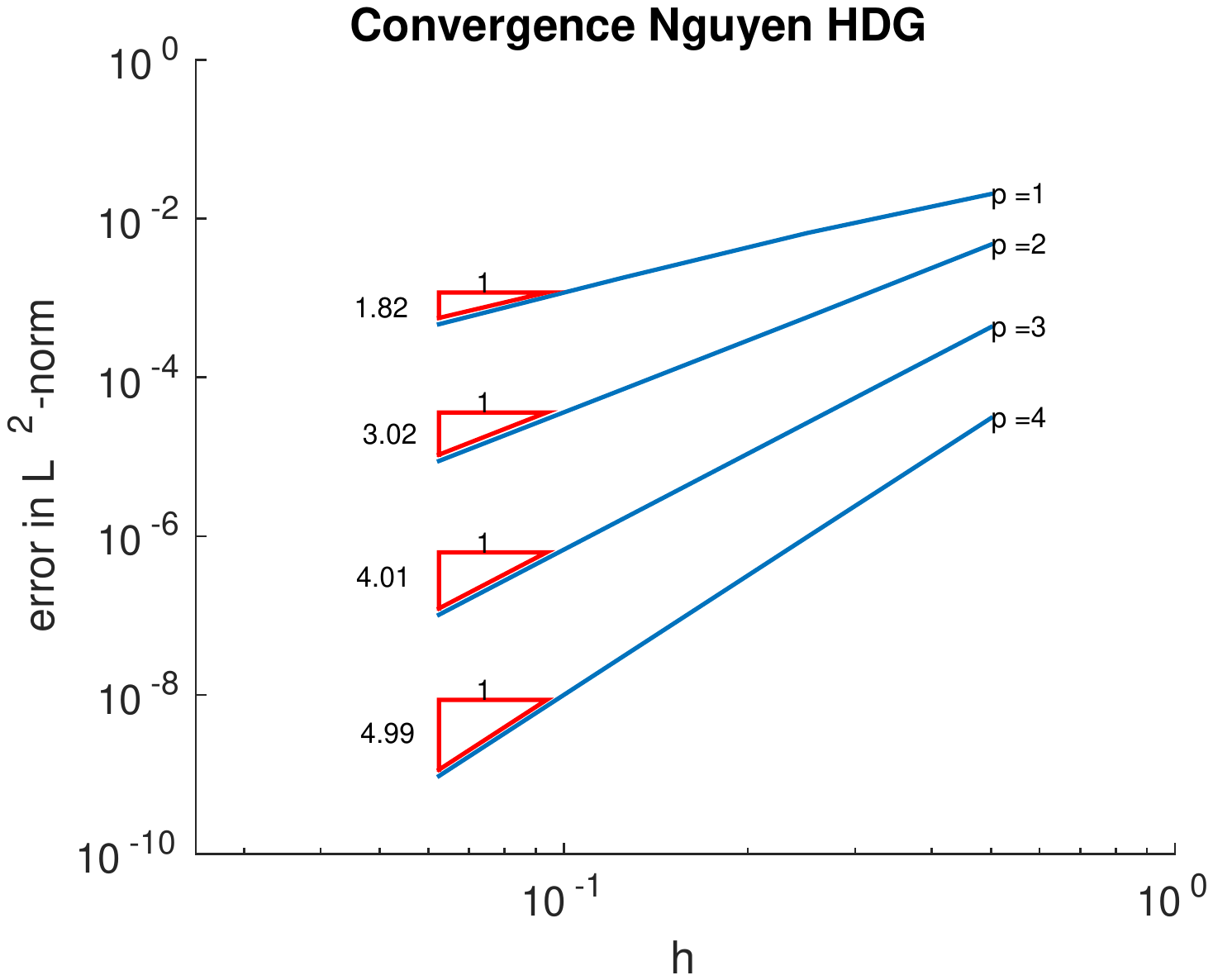}
\label{Nguyen_3d_advec_steady}
}
\caption{ h-convergence of the HDG method using iHDG with upwind and NPC fluxes. } 
\label{l2err_3dsteady}
\end{figure}

\begin{figure}[h!t]
\vspace{-12mm}
\includegraphics[trim=1cm 3cm 2cm 4cm,clip=true,width=0.5\textwidth]{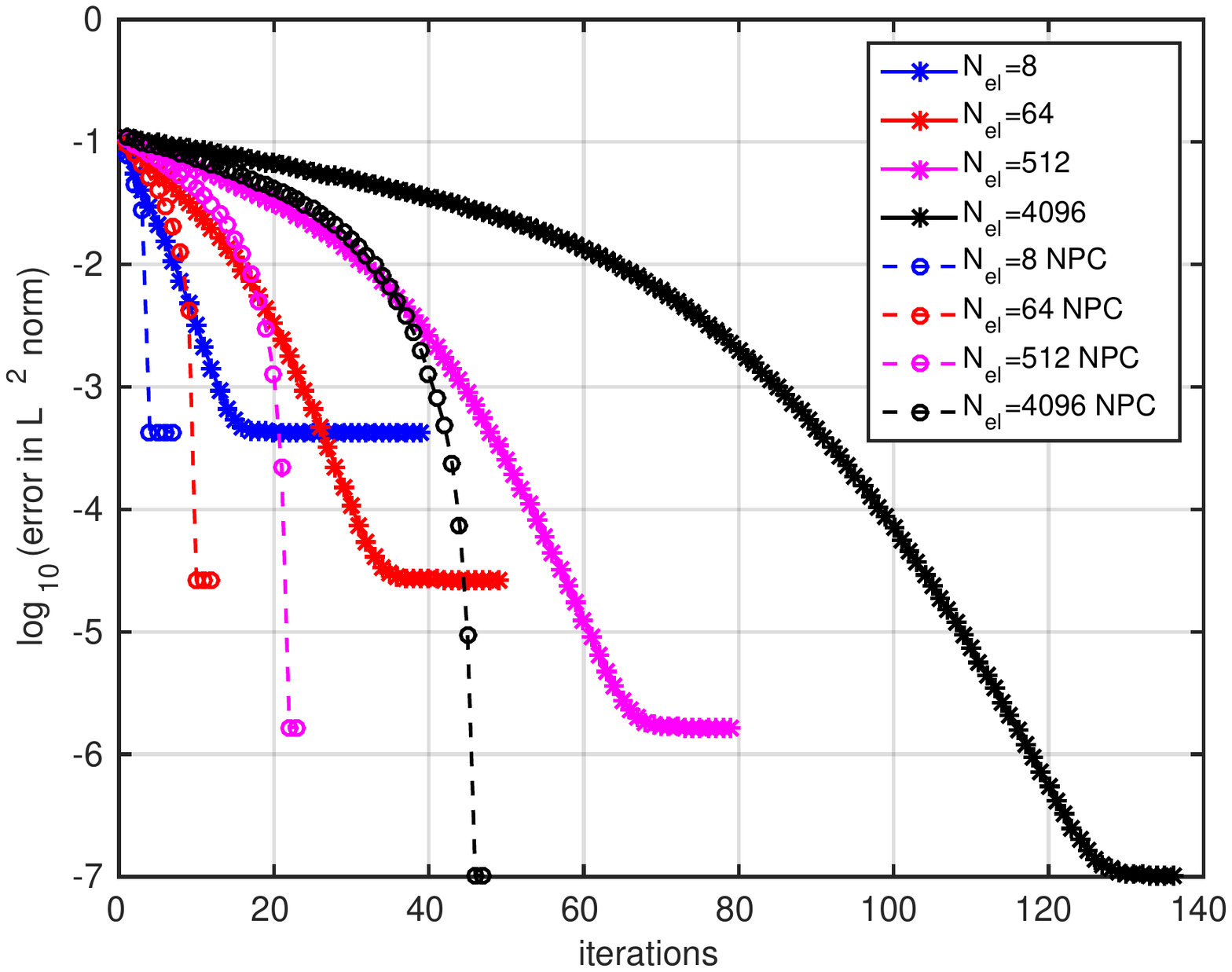}
\includegraphics[trim=1cm 3cm 2cm 4cm,clip=true,width=0.5\textwidth]{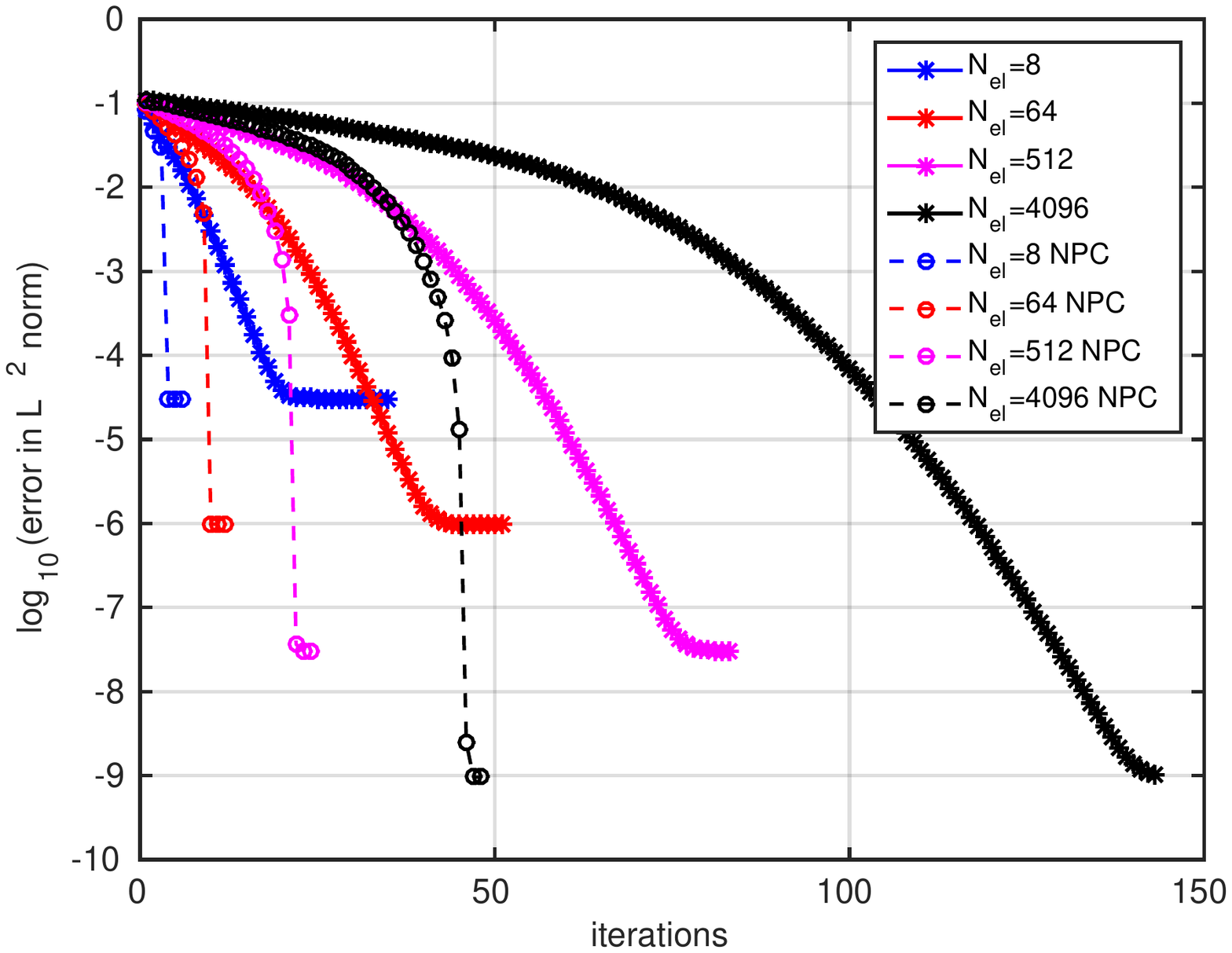}
\vspace{-15mm}
\caption{Error history in terms of the number iterations for
solution order $p=3$ (left)  and $p = 4$ (right) as the mesh is refined (the number of elements $\Nel$ increases).} 
\label{p3p4_3dsteady}
\end{figure}

\begin{figure}[h!t!b!]
  \subfigure[$\u$ at $iteration = 1$]{
    \includegraphics[trim=4cm 0cm 3cm 0cm,clip=true,width=0.31\columnwidth]{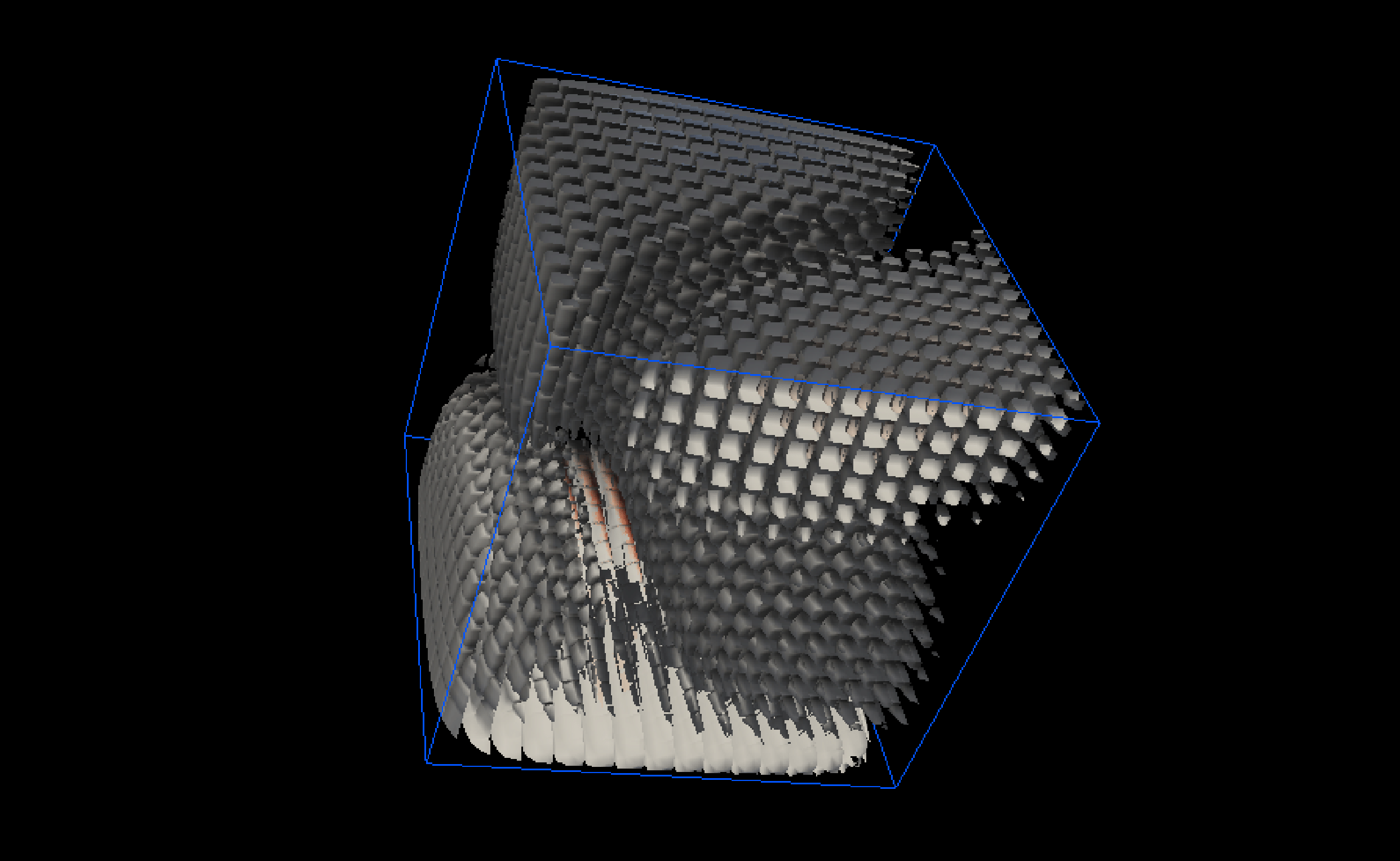}
  }
 \subfigure[$\u$ at $iteration = 32$]{
    \includegraphics[trim=4cm 0cm 3cm 0cm,clip=true,width=0.31\columnwidth]{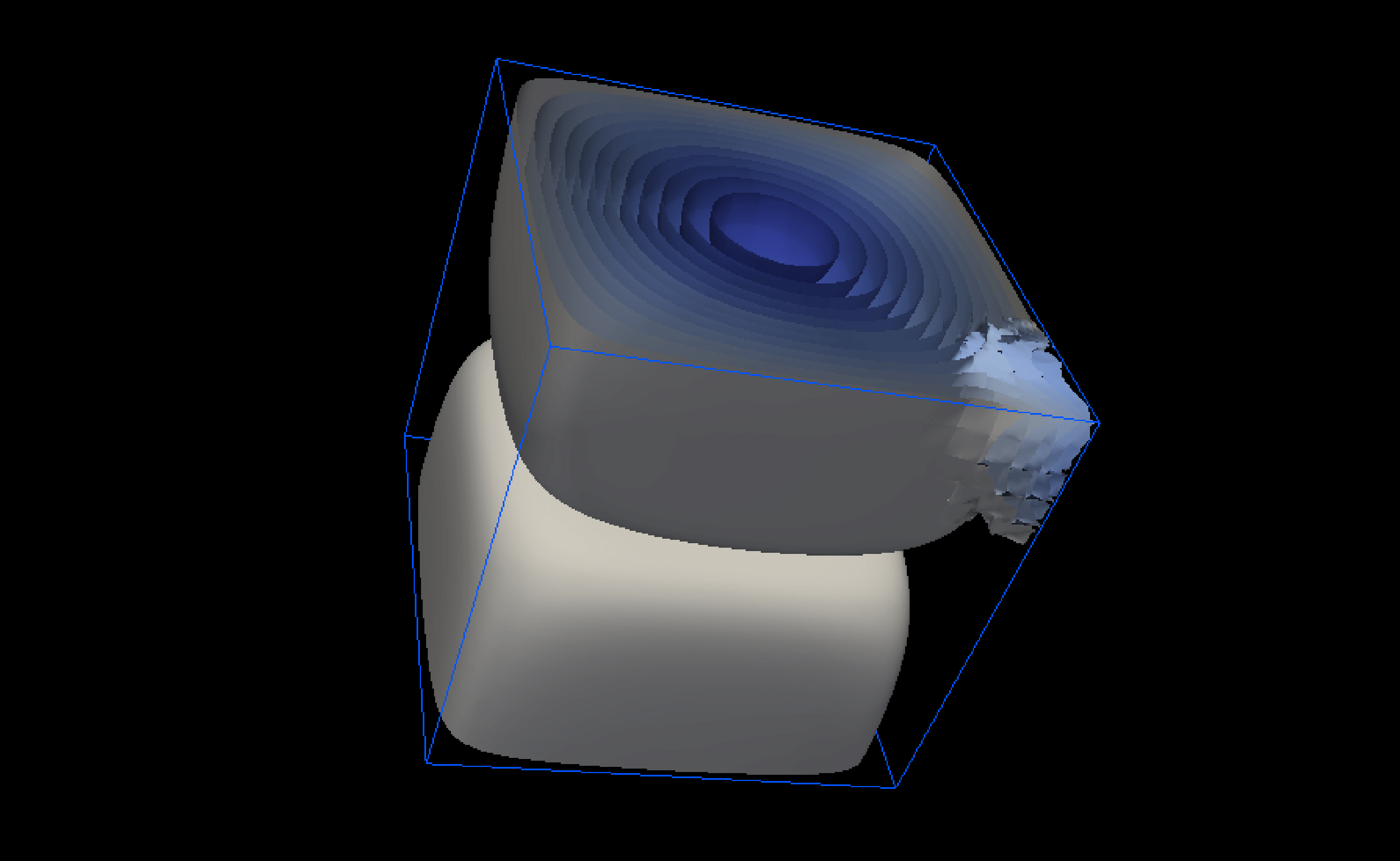}
  }
 \subfigure[$\u$ at $iteration = 48$]{
    \includegraphics[trim=4cm 0cm 3cm 0cm,clip=true,width=0.31\columnwidth]{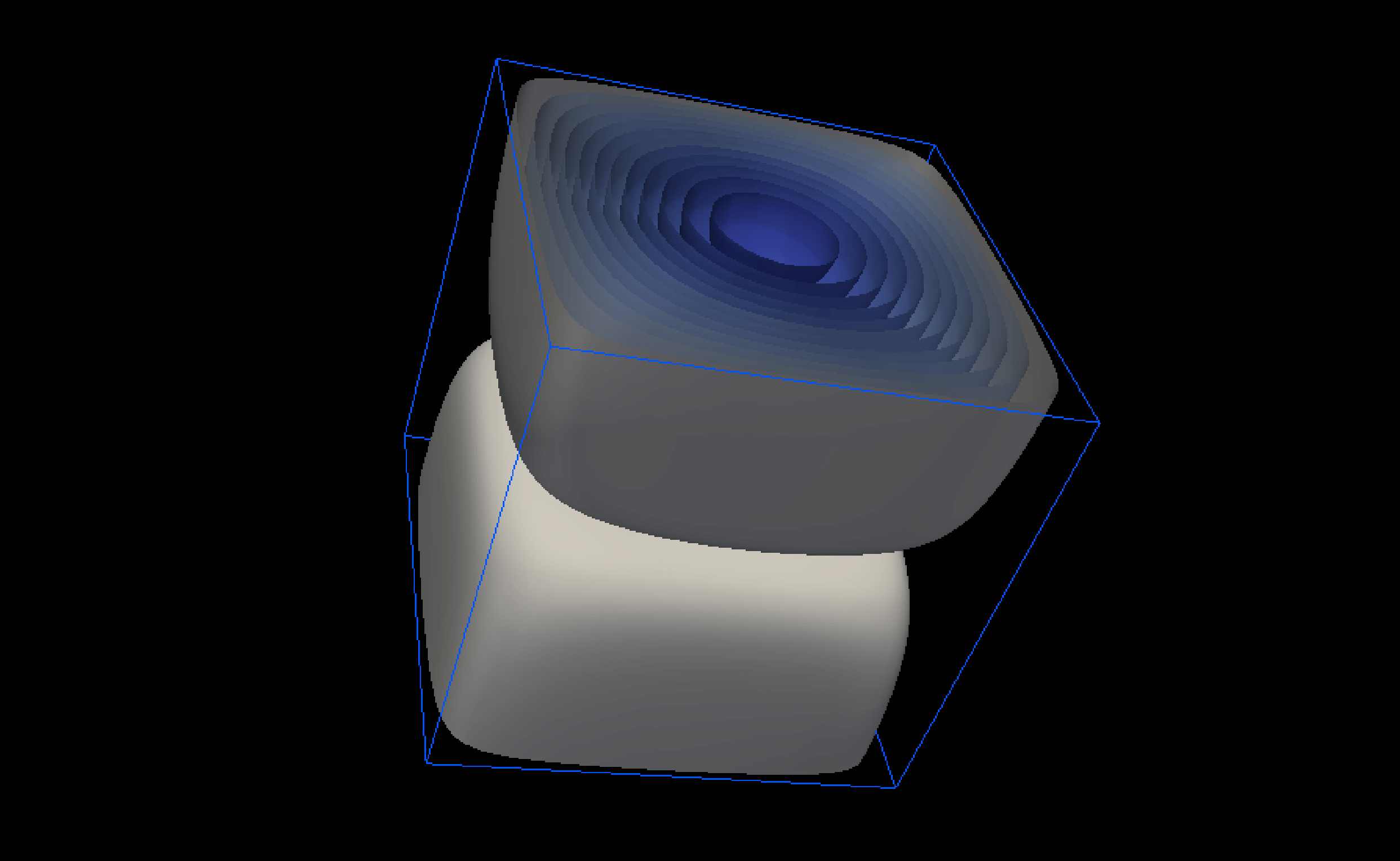}
  }
  \subfigure[$\u$ at $iteration = 1$]{
    \includegraphics[trim=4cm 0cm 3cm 0cm,clip=true,width=0.31\columnwidth]{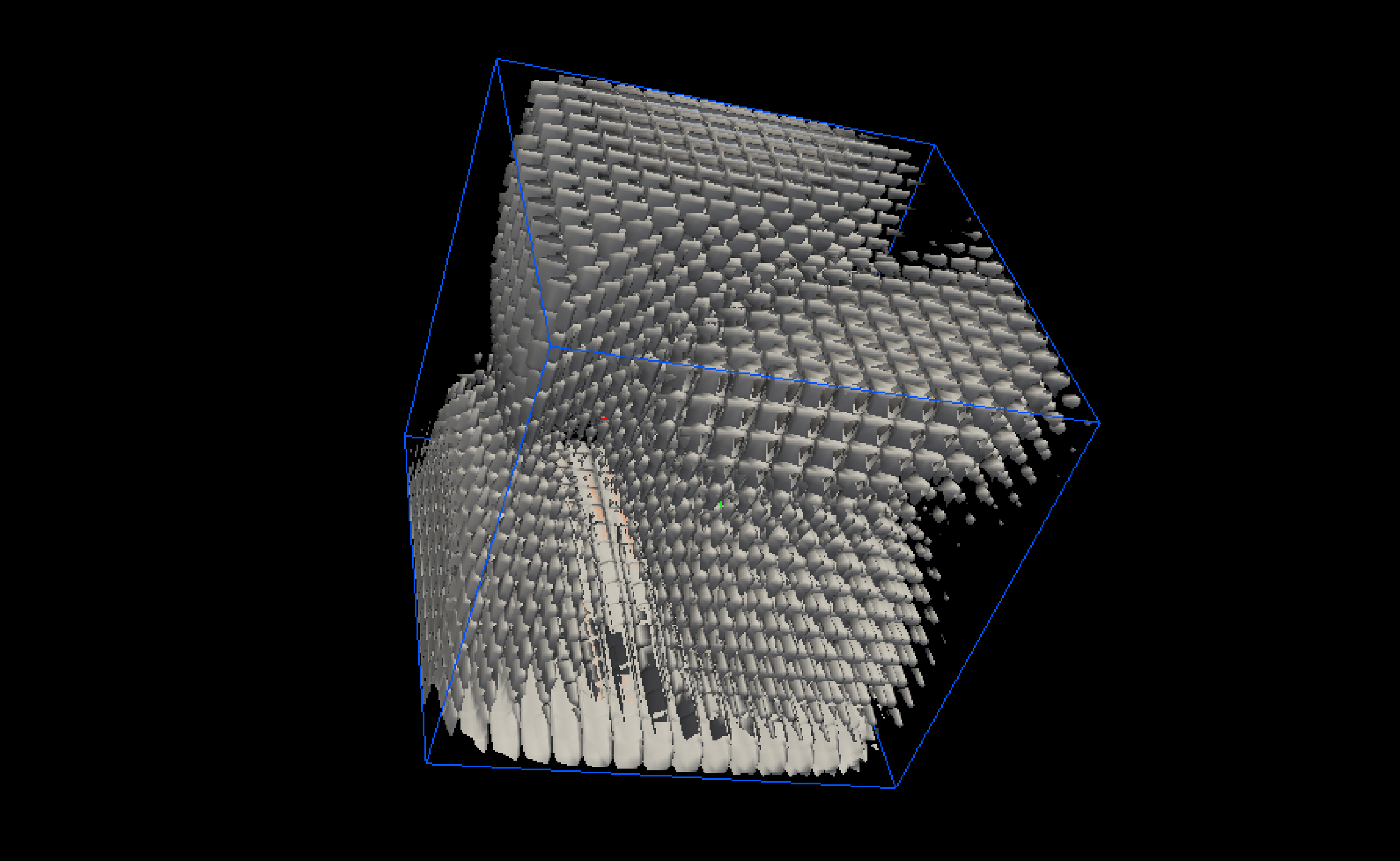}
  }
   \subfigure[$\u$ at $iteration = 32$]{
    \includegraphics[trim=4cm 0cm 3cm 0cm,clip=true,width=0.31\columnwidth]{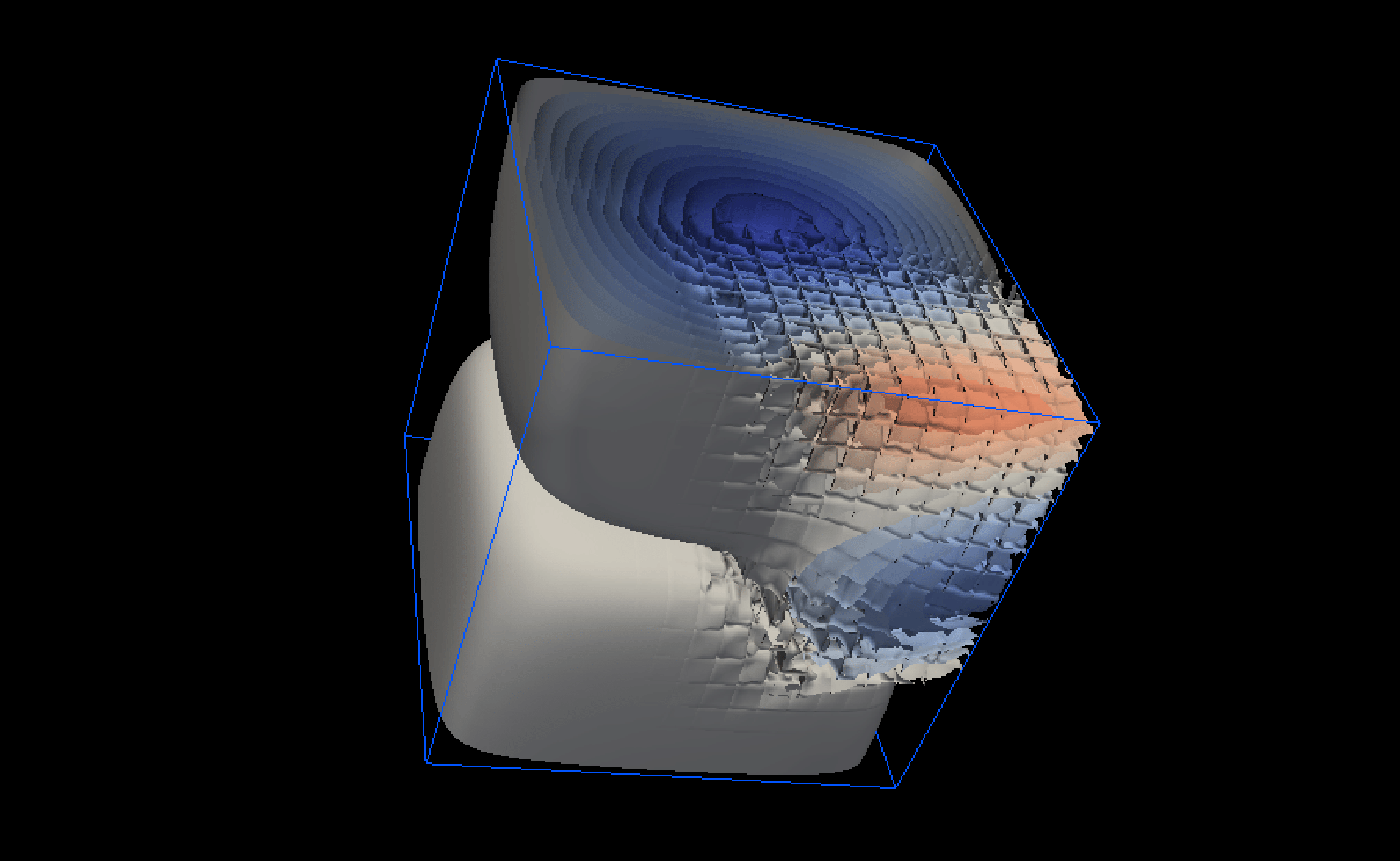}
  }
  \subfigure[$\u$ at $iteration = 143$]{
    \includegraphics[trim=4cm 0cm 3cm 0cm,clip=true,width=0.31\columnwidth]{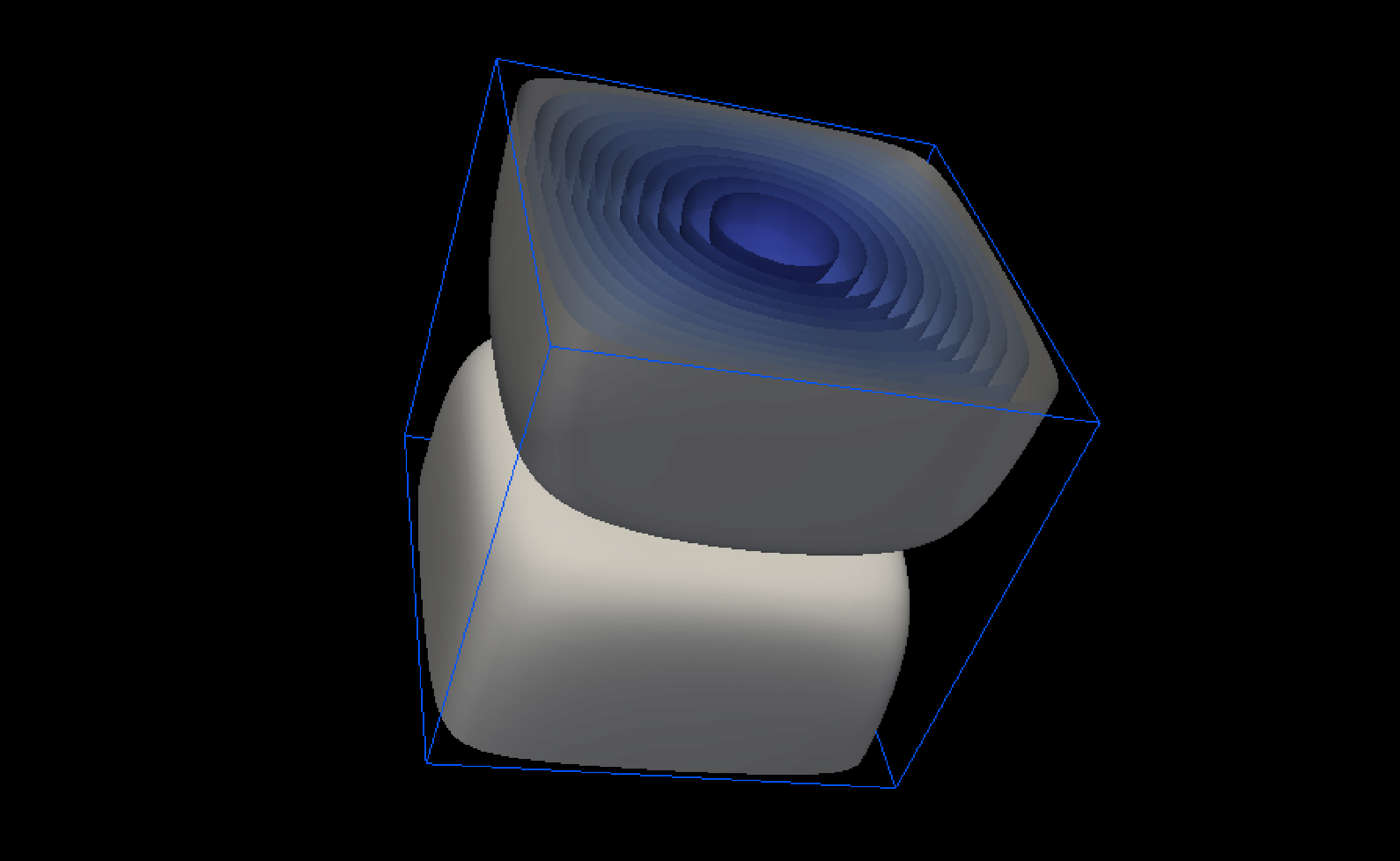}
  }
 \caption{Evolution of the iHDG solution in terms of the number of iterations for the NPC flux (top row) and  the upwind HDG flux (bottom row).}
 \vspace{-7mm}
  \figlab{3diterfig2}
\end{figure}

Figure \ref{l2err_3dsteady} shows the $h$-convergence of the HDG
discretization with the iHDG iterative solver. The convergence is optimal
with rate $(p+1)$ for both fluxes. Figure \ref{p3p4_3dsteady} compares the convergence history of the
iHDG solver in the log-linear scale.  As proved in Theorem
\theoref{DDMConvergence} the iHDG with upwind flux is exponentially
convergent with respect to the number of iterations $k$, while the convergence
is attained in finite number of iterations for the NPC flux as predicted
in Theorem \theoref{iHDG-NPC}. Note that the stagnation region
observed near the end of each curve is due to the fact that for a
particular mesh size $h$ and solution order $p$ we can achieve only
as much accuracy as prescribed by the HDG discretization error and
cannot go beyond that. Numerical results for different solution
orders\footnote{Here the results for $p=\LRc{1,2}$ are omitted for brevity.} also verify the fact that the convergence of the iHDG algorithm is
independent of the solution order $p$. The evolution of the iHDG
solution in terms of the number of iterations is shown in Figure
\figref{3diterfig2}. Again, for the scalar transport equation, iHDG
automatically marches the solution from the inflow to the outflow. We
also record in the $6$th and $7$th columns of Table
\ref{tab:2d_discont_3d_smooth} the number of iterations that the iHDG
algorithm took for both the fluxes. As predicted by our theoretical findings,
the number of
iterations is independent of the solution order.

\subsection{Linearized shallow water equations}
In this section we consider equation \eqnref{linearizedShallow} with a linear
standing wave, for which, we set $\Phi=1$, $f=0$, $\gamma=0$ (zero bottom friction),
$\taub=0$ (zero wind stress). The domain is $[0, 1]\times[0, 1]$ and
wall boundary condition is applied on the domain boundary. 
The following exact
solution \cite{GiraldoWarburton08} is taken
\begin{subequations}
\eqnlab{shallowexact}
\begin{align}
	\phi^e&=\cos(\pi x)\cos(\pi y)\cos(\sqrt{2} \pi t), \\
	u^e&=\frac{1}{\sqrt{2}}\sin(\pi x)\cos(\pi y)\sin(\sqrt{2} \pi t), \\
	v^e&=\frac{1}{\sqrt{2}}\cos(\pi x)\sin(\pi y)\sin(\sqrt{2} \pi t). 
\end{align}
\end{subequations}

\begin{figure}[h!b!t!]
\begin{minipage}{0.5\linewidth}
\centering
\includegraphics[trim=2.5cm 7.75cm 5.5cm 9.25cm,clip=true,width=0.7\textwidth]{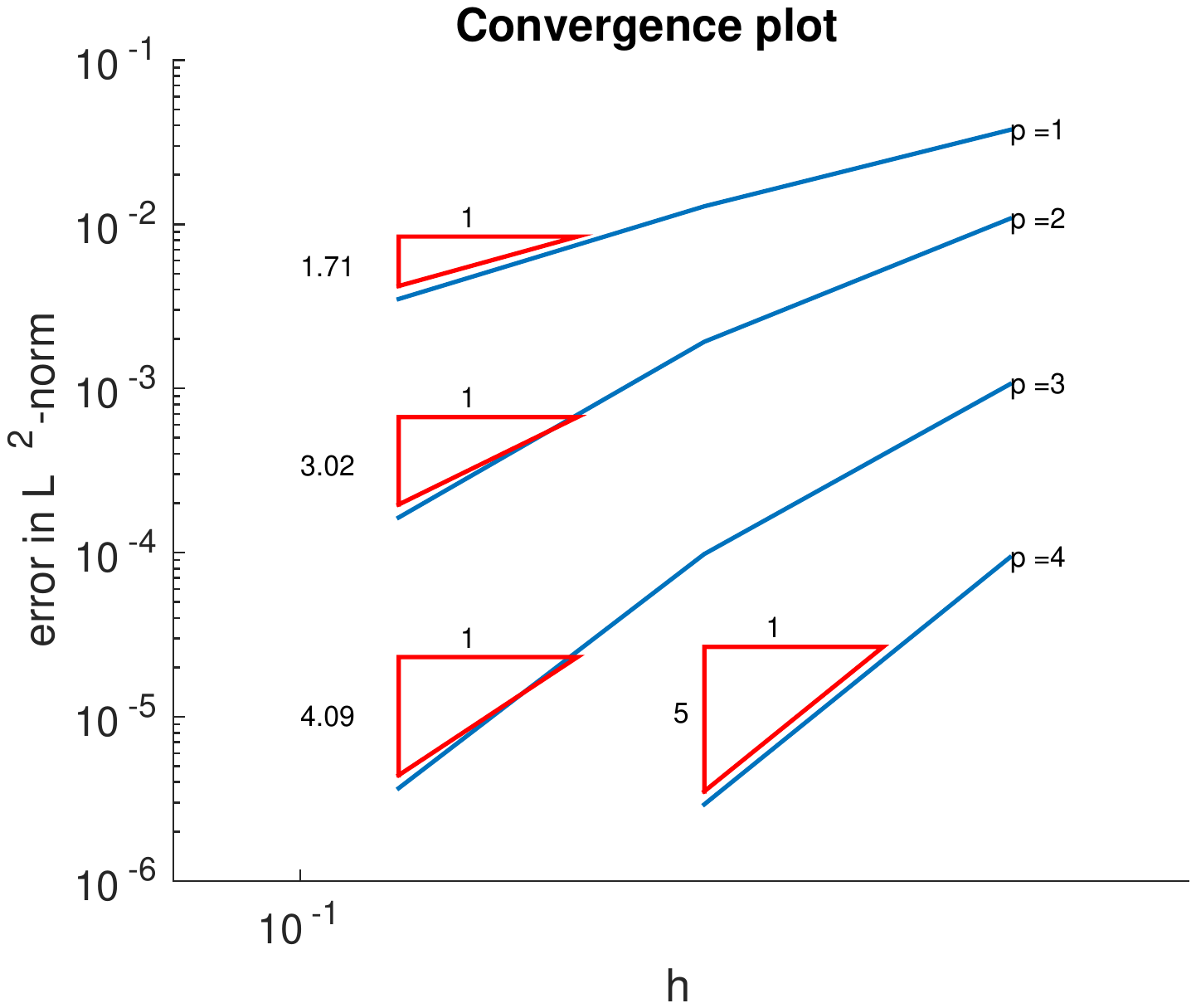}
\end{minipage}
\begin{minipage}{0.5\linewidth}
\begin{center}
\begin{tabular}{ | r || c | c | c | c | }
\hline
\multirow{2}{*}{$N_{el}$} & \multicolumn{4}{ |c| }{Solution order } \\
& $1$ & $2$ & $3$ & $4$ \\
\hline
\hline
16 & 12 & 13 & 9 & 10 \\
\hline
64 & 11 & 12 & 7 & 9 \\ 
\hline
256 & 9 & 11 & 7 & 8 \\
\hline
1024 & 7 & 10 & 7 & 7 \\
\hline
\end{tabular}
\end{center}
\end{minipage}
\caption{h-convergence of iHDG for $10^{5}$ time steps with $\Delta t=10^{-6}$ (left) and the number of iHDG iterations per time step with $\Delta t=\frac{h}{(p+1)(p+2)}$ (right) for the linearized shallow water equation.}\label{tab:Comparison_shallow_water}
\vspace{-7mm}
\end{figure}
The iHDG algorithm with upwind HDG flux described in Section
\secref{iHDG-hyperbolic} along with the Crank-Nicolson method for time discretization is employed in this problem.  The convergence
of the solution is presented in Figure \ref{tab:Comparison_shallow_water}. Here
we have taken $\Delta t=10^{-6}$ as the stepsize with $10^5$ steps. As
can be seen, the optimal convergence rate of $(p+1)$ is attained. The
number of iterations required per time step in this case is constant
and is always equal to $2$ for all meshes and solution orders
considered. The reason is due to i) a warm-start strategy, that is, the
initial guess for each time step is taken as the solution of the
previous time step, and ii) small time stepsize.

Following remark \ref{Shallow-water-remark} we choose $\Delta
t=\frac{h}{(p+1)(p+2)}$ and report the number of iterations for
different meshes and solution orders in Figure
\ref{tab:Comparison_shallow_water}. Clearly, finer mesh and higher
solution order require smaller time stepsize, and hence less number
of iterations, for the iHDG algorithm to converge.

\subsection{Convection-Diffusion Equation}
\seclab{Convection-diffusion-steady}
In this section equation \eqnref{Convection-diffusion-eqn} is considered with
the exact solution
taken as
$$u^e=\frac{1}{\pi}\sin(\pi x)\cos(\pi y)\sin(\pi z).$$ The forcing is
chosen such that it corresponds to the exact solution. The domain is
same as the one in section \ref{3D_steady_advection}. Dirichlet
boundary condition based on the exact solution is applied on the
boundary faces and the stopping criteria is same as
\eqnref{tolerancecriteria}.
\subsubsection{Convection dominated regime}
Let us consider $10^{-3}\le\kappa\le10^{-6}$, $\nu=1$, and $\betab=(1+z, 1+x, 1+y)$. Since the maximum
velocity in this example is $\mc{O}(1)$, this represents convection
dominated regime. 
Figure \ref{l2err_epm3_epm6_upwind} shows the optimal h-convergence of the iHDG method
with the upwind HDG flux\footnote{The convergence with NPC flux is similar and hence not shown.} 
for $\kappa=10^{-3}$ and $\kappa=10^{-6}$, respectively. The error history 
for solution orders $p=\LRc{3,4}$ is given
in Figure \ref{CDR_epm3_epm6_p3p4}. As expected, for
$\kappa=10^{-6}$, the iHDG method with either upwind or NPC flux behaves
similar to the pure convection case. From table \ref{tab:Comparison_different_kappa} 
the convergence of the upwind iHDG approach remains the same, that is, the number of
iterations is insensitive to the diffusion coefficient $\kappa$. The iHDG approach
with the NPC flux improves as $\kappa$ decreases. This is because the stabilization ($\tau$)
of the NPC flux contains $\kappa$ whereas the stabilization of the upwind flux does not.
\begin{figure}[h!b!t!]
\vspace{-19mm}
\subfigure[$\kappa=10^{-3}$]{
\includegraphics[trim=3.4cm 8cm 4cm 5cm,clip=true,width=0.5\textwidth]{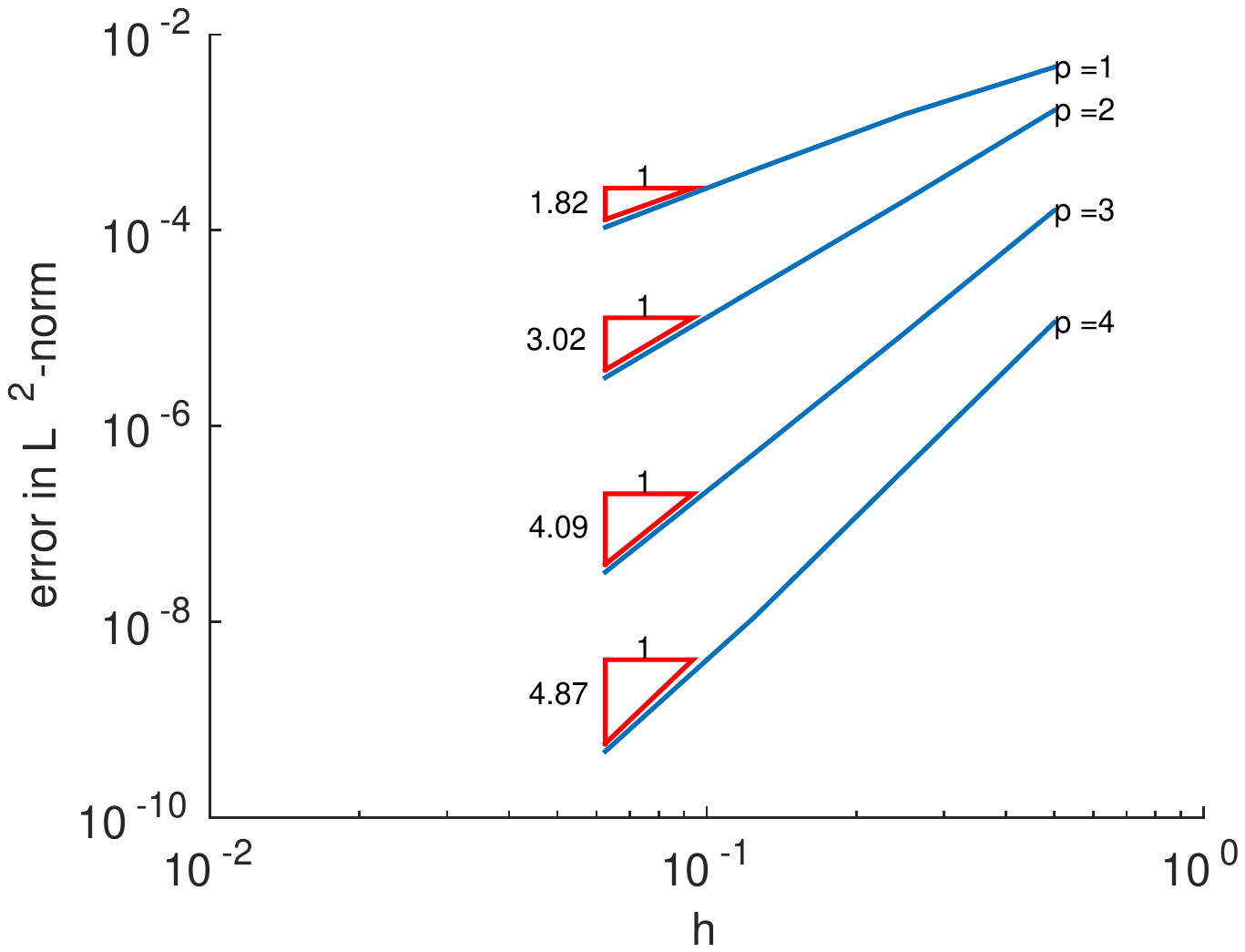}
\label{l2err_epm3-upwind}
}
\subfigure[$\kappa=10^{-6}$]{
\includegraphics[trim=3.2cm 8cm 4cm 5cm,clip=true,width=0.5\textwidth]{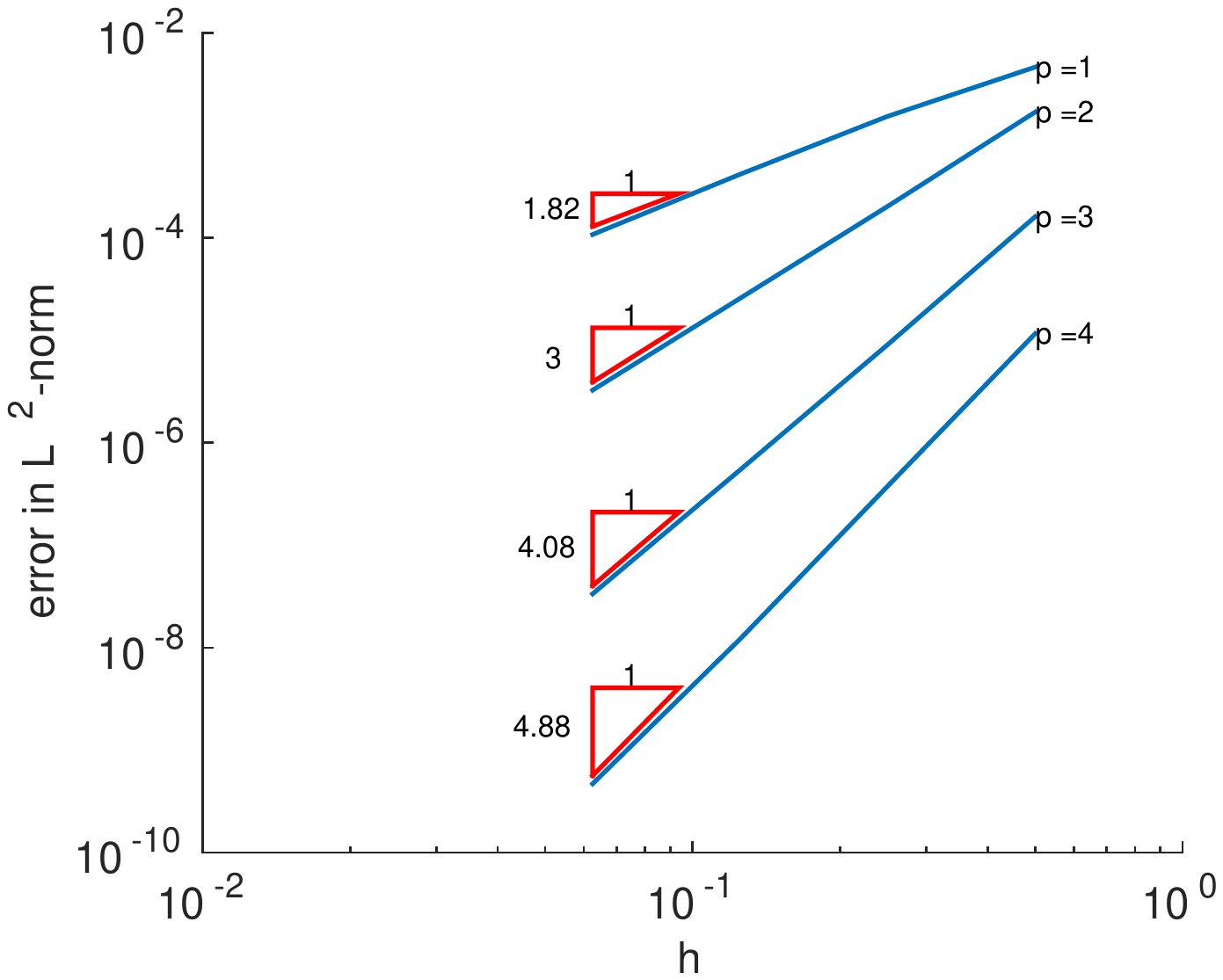}
\label{l2err_epm6-upwind}
}
\caption{h-convergence of iHDG method with upwind HDG flux for $\kappa=\LRc{10^{-3}, 10^{-6}}$.} 
\label{l2err_epm3_epm6_upwind}
\end{figure}

\begin{table}[h!b!t!]
\begin{center}
\caption{The number of iHDG iterations for various $\kappa$ with upwind and NPC fluxes.} 
\label{tab:Comparison_different_kappa}
\begin{tabular}{ | r || r || c | c | c | c | c | c | }
\hline
\multirow{2}{*}{$h$} & \multirow{2}{*}{$p$} & \multicolumn{3}{|c|}{Upwind flux} & \multicolumn{3}{|c|}{NPC flux} \\
& & $\kappa=10^{-2}$ & $\kappa=10^{-3}$ & $\kappa=10^{-6}$ & $\kappa=10^{-2}$ & $\kappa=10^{-3}$ & $\kappa=10^{-6}$ \\
\hline
0.5 & 1 & 24 & 23 & 23 & 10 & 7 & 5 \\
\hline
0.25 & 1 & 30 & 34 & 35 & 21 & 14 & 12 \\
\hline
0.125 & 1 & 50 & 55 & 56 & 94 & 26 & 22 \\
\hline
0.0625 & 1 & 90 & 94 & 97 & * & 52 & 46 \\
\hline
\hline
0.5 & 2 & 26 & 24 & 25 & 13 & 8 & 5 \\
\hline
0.25 & 2 & 41 & 42 & 42 & 22 & 15 & 12 \\
\hline
0.125 & 2 & 66 & 67 & 67 & * & 26 & 23 \\
\hline
0.0625 & 2 & * & 109 & 110 & * & 59 & 46 \\
\hline
\hline
0.5 & 3 & 27 & 31 & 31 & 21 & 8 & 5 \\
\hline
0.25 & 3 & 33 & 33 & 38 & * & 16 & 12 \\
\hline
0.125 & 3 & * & 58 & 60 & * & 30 & 24 \\
\hline
0.0625 & 3 & * & 102 & 106 & * & 59 & 48 \\
\hline
\hline
0.5 & 4 & 26 & 27 & 27 & 73 & 8 & 5 \\
\hline
0.25 & 4 & 50 & 41 & 43 & * & 16 & 12 \\
\hline
0.125 & 4 & * & 71 & 72 & * & 32 & 24 \\
\hline
0.0625 & 4 & * & 123 & 125 & * & 71 & 48 \\
\hline
\end{tabular}
\end{center}
\end{table}

\begin{figure}[h!t!b!]
\subfigure[Error history for p=3]{
\includegraphics[trim=3cm 7cm 3cm 8cm,clip=true,width=0.5\textwidth]{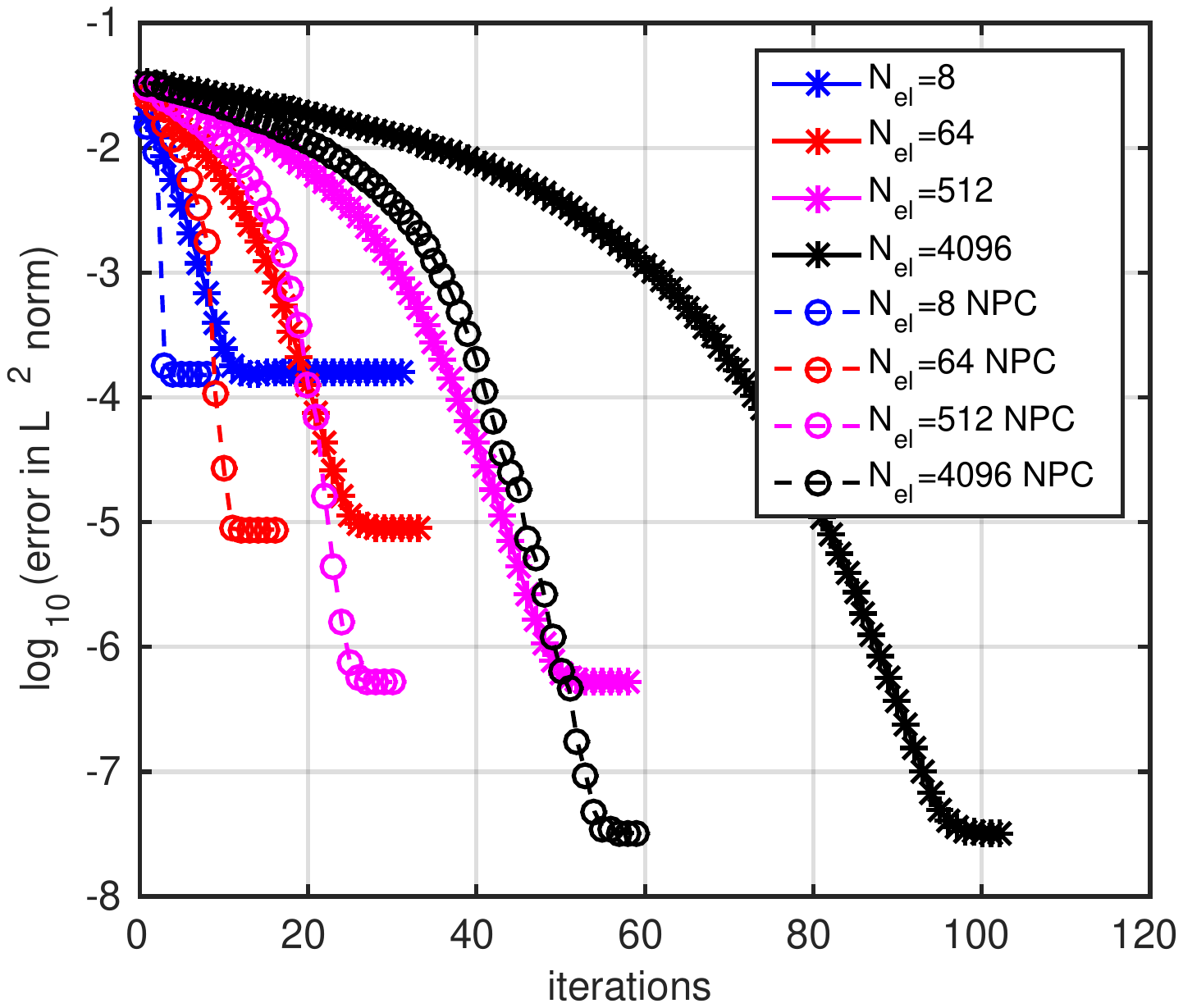}
\label{CDR_epm3_p_3}
}
\subfigure[Error history for p=4]{
\includegraphics[trim=3cm 7cm 3cm 8cm,clip=true,width=0.5\textwidth]{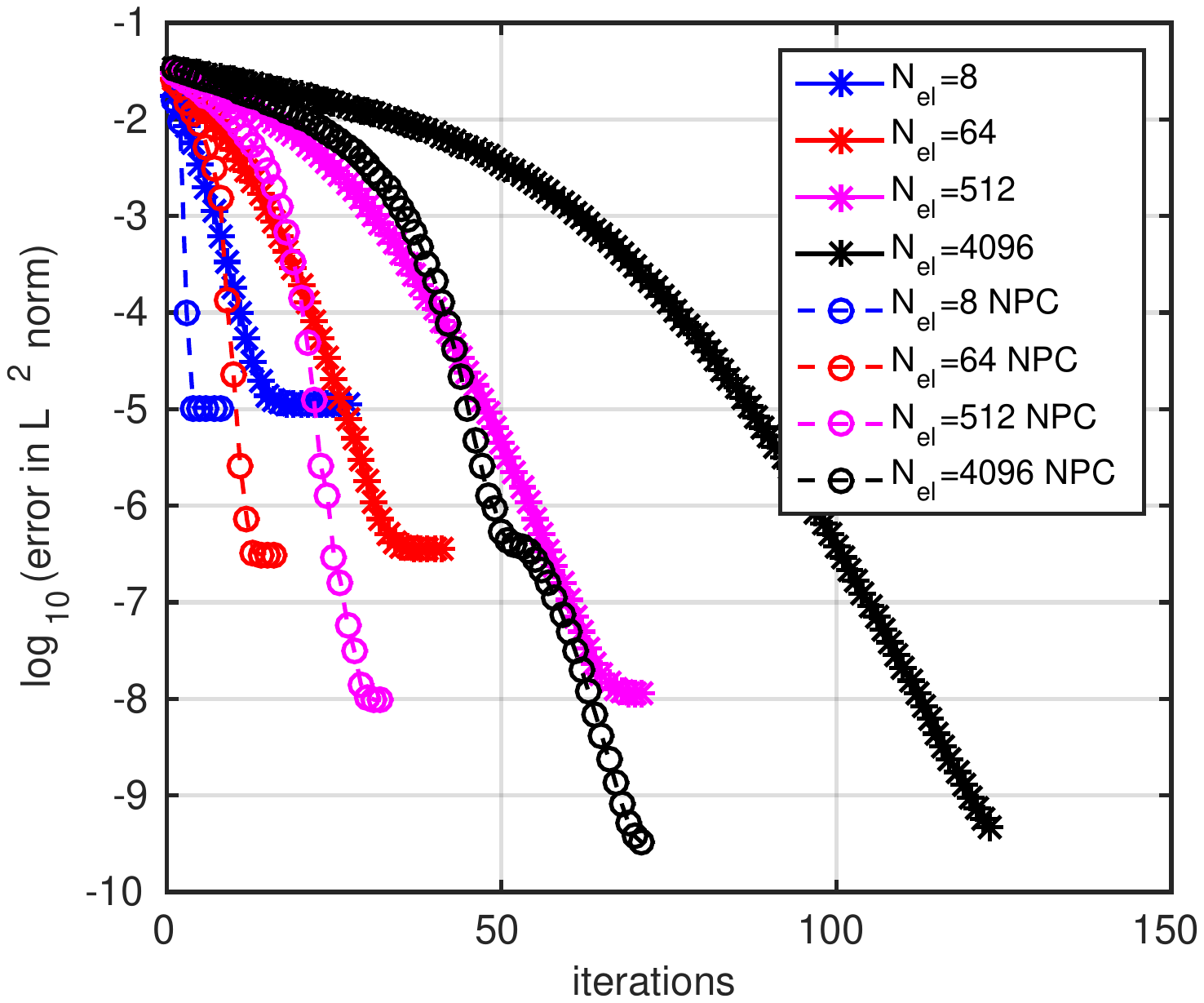}
\label{CDR_epm3_p_4}
}
\subfigure[Error history for p=3]{
\includegraphics[trim=3cm 7cm 3cm 8cm,clip=true,width=0.5\textwidth]{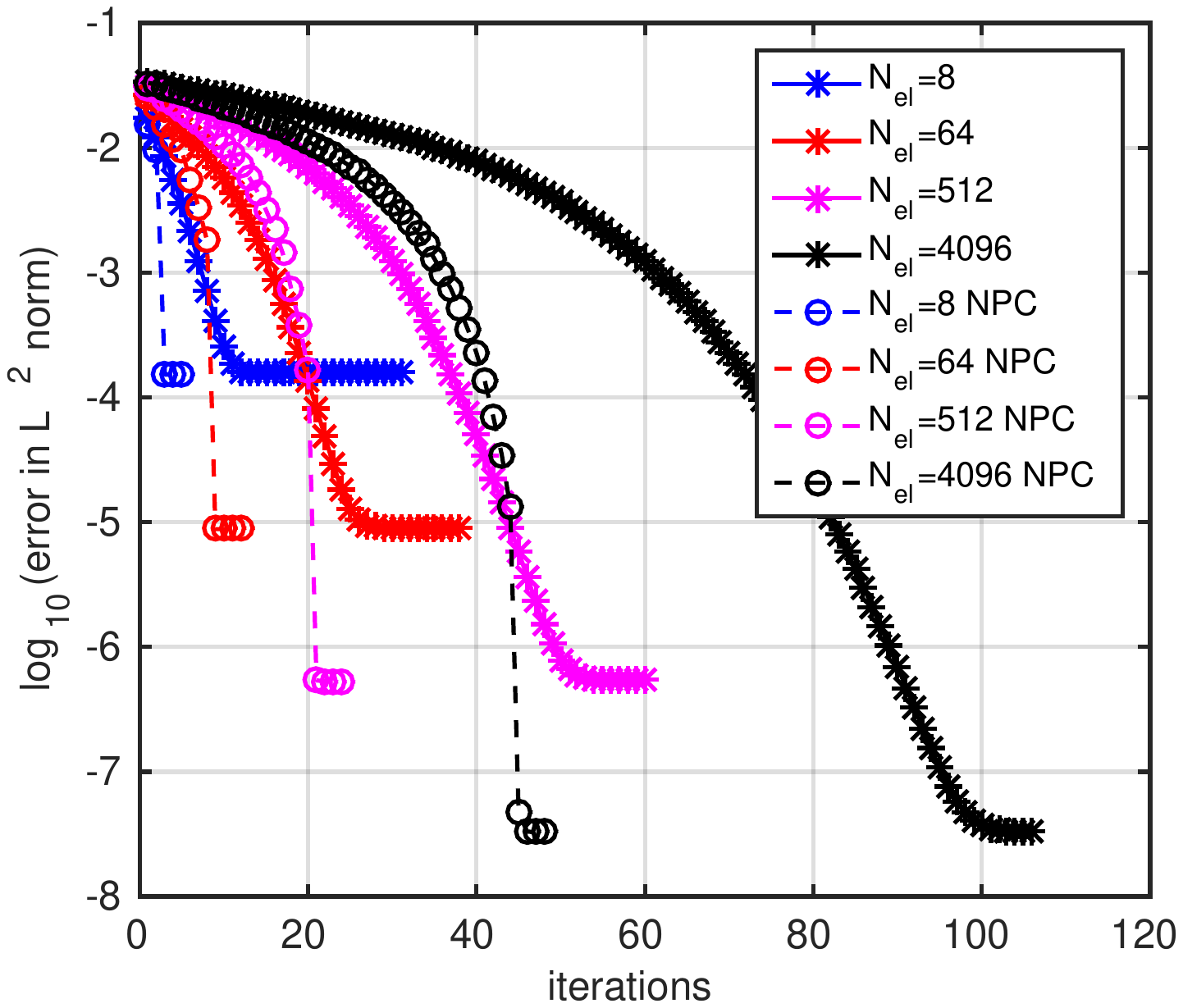}
\label{CDR_epm6_p_3}
}
\subfigure[Error history for p=4]{
\includegraphics[trim=3cm 7cm 3cm 8cm,clip=true,width=0.5\textwidth]{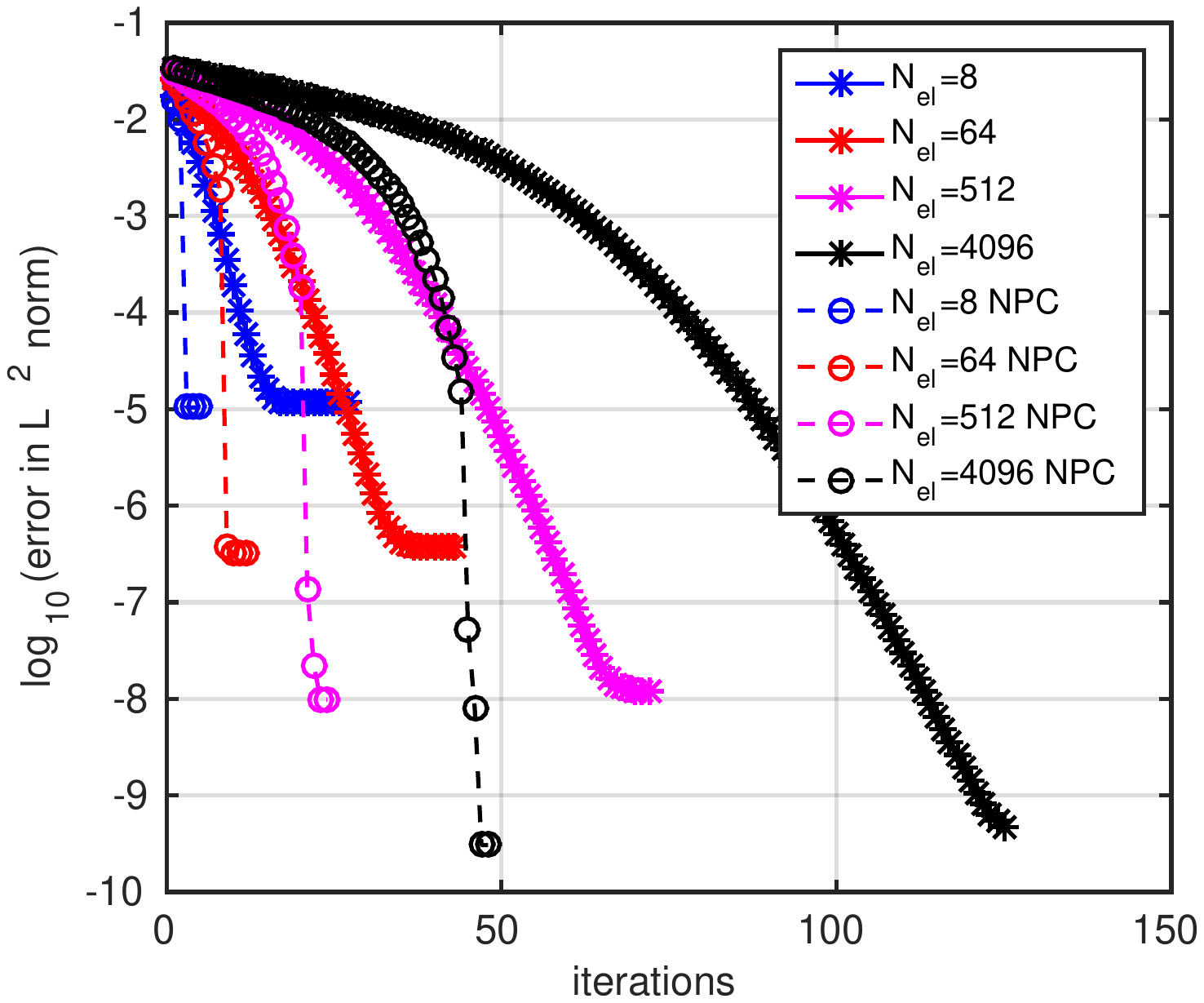}
\label{CDR_epm6_p_4}
}
\caption{Convergence history for the iHDG method with upwind and NPC fluxes for $\kappa=10^{-3}$ (top row) and $\kappa=10^{-6}$ (bottom row).}
\label{CDR_epm3_epm6_p3p4}
\vspace{-10mm}
\end{figure}

\subsubsection{Mixed (hyperbolic-elliptic) regime}
In this regime, we take $\kappa=10^{-2}$, $\nu=1$, and $\betab=(1+z,
1+x, 1+y)$. Table \ref{tab:Comparison_different_kappa} shows that both
upwind and NPC fluxes fail to converge for a number of cases (``$*$'' indicates divergence)
though the upwind iHDG is more robust. This is due to the violation of the
necessary condition $\mathcal{B}>0$ in Theorem
\theoref{ThmUpwindDDM} for finer meshes. From section
\secref{iHDG-first-order-CDR}, by choosing
$\varepsilon=\mc{O}\LRp{\frac{1}{\bar\tau}}$ for both 
upwind and NPC fluxes we can estimate the minimum mesh sizes 
and they are shown in table \ref{tab:theory_k_0.01}. Comparing the second and the third rows of
table \ref{tab:theory_k_0.01} with the first, the third and the sixth
columns of table \ref{tab:Comparison_different_kappa} we see that the
numerical results differ from the theoretical estimates by a constant.
In case of the upwind flux the constant is 2 and for the NPC flux it is 4. That is, if we multiply second and third rows of table \ref{tab:theory_k_0.01}
with 2 and 4 respectively and compare it with the numerical results in 
table \ref{tab:Comparison_different_kappa} we can see an agreement.

\begin{table}[h!b!t!]
\begin{center}
\caption{Theoretical estimates on the minimum mesh size for convergence of upwind and NPC fluxes for $\kappa=0.01$ from section \secref{iHDG-first-order-CDR}.} 
\label{tab:theory_k_0.01}
\begin{tabular}{ | r || c | c | c | c | }
\hline
{Flux} & {$p=1$} & $p=2$ & $p=3$ & $p=4$ \\
\hline
Upwind & $h>0.019$ & $h>0.0375$ & $h>0.0625$ & $h>0.094$ \\
\hline
NPC & $h>0.0225$ & $h>0.045$ & $h>0.075$ & $h>0.1125$ \\ 
\hline
\end{tabular}
\end{center}
\end{table}

\subsubsection{Diffusion regime (elliptic equation)}
\seclab{elliptic-results} As an example for the diffusion limit, we
take $\betab=0$ and $\kappa=1$.  In order to verify the conditional
convergence in section \secref{convergence-condition-convdiff}, we
choose three different values of $\nu$ in the set $\LRc{1,10,100}$.
Recall in section \secref{convergence-condition-convdiff} that
the upwind flux does not converge for $\kappa=1$ and
$\betab=0$ (pure diffusion regime). It is true for NPC flux also, due to
lack of mesh dependent stabilization. From the condtions derived in section 
\secref{convergence-condition-convdiff} we choose $\tau=\frac{(p+1)(p+2)}{h}$.
The stopping criteria is taken as in \eqnref{tolerancecriteria}.

In table \ref{tab:elliptic_nu_comparison} we compare the number of
iterations the iHDG algorithm takes to converge for $\nu=\LRc{1,10}$ cases. Note that
``$*$'' indicates that the scheme either reaches $2000$ iterations or
diverges.
The convergent condition in section
\secref{convergence-condition-convdiff} is equivalent to
$h>\mc{O}\LRp{\frac{1}{\sqrt{\lambda}}}$ and since $\sqrt{\lambda} =
\sqrt{\nu}$ we see similar convergence/divergence behavior for both
$\nu=1$ and $10$ (because the lower bound for $h$ is of the same order
for these cases). For $\nu=100$ the lower bound for $h$ is one order
of magnitude smaller, and this allows us to obtain convergence for two additional cases:
i) $N_{el}=512$ and $p=4$ with $1160$ iterations; and ii) $N_{el}=4096$ and  $p=2$
with $1450$ iterations.

\begin{table}[h!b!t!]
\begin{center}
\caption{The number of iHDG iterations for $\nu=1$ and $\nu=10$ with $\tau=\frac{(p+1)(p+2)}{h}$.} 
\label{tab:elliptic_nu_comparison}
\begin{tabular}{ | r || c | c | c | c | c | c | c | c | }
\hline
\multirow{2}{*}{$N_{el}$} & \multicolumn{4}{|c|}{$\nu=1$} & \multicolumn{4}{|c|}{$\nu=10$} \\
& $p=1$ & $p=2$ & $p=3$ & $p=4$ & $p=1$ & $p=2$ & $p=3$ & $p=4$ \\
\hline
8 & 4 & 24 & 60 & 98 & 3 & 17 & 54 & 90 \\
\hline
64 & 37 & 119 & 285 & 569 & 32 & 108 & 249 & 497 \\
\hline
512 & 158 & 527 & 1296 & * & 130 & 429 & 1124 & * \\
\hline
4096 & 1519 & * & * & * & 1178 & * & * & * \\
\hline
\end{tabular}
\end{center}
\end{table}

\begin{figure}[h!t!b!]
\subfigure[$\nu=1$]{
\includegraphics[trim=0.5cm 10cm 7cm 6cm,clip=true,width=0.5\textwidth]{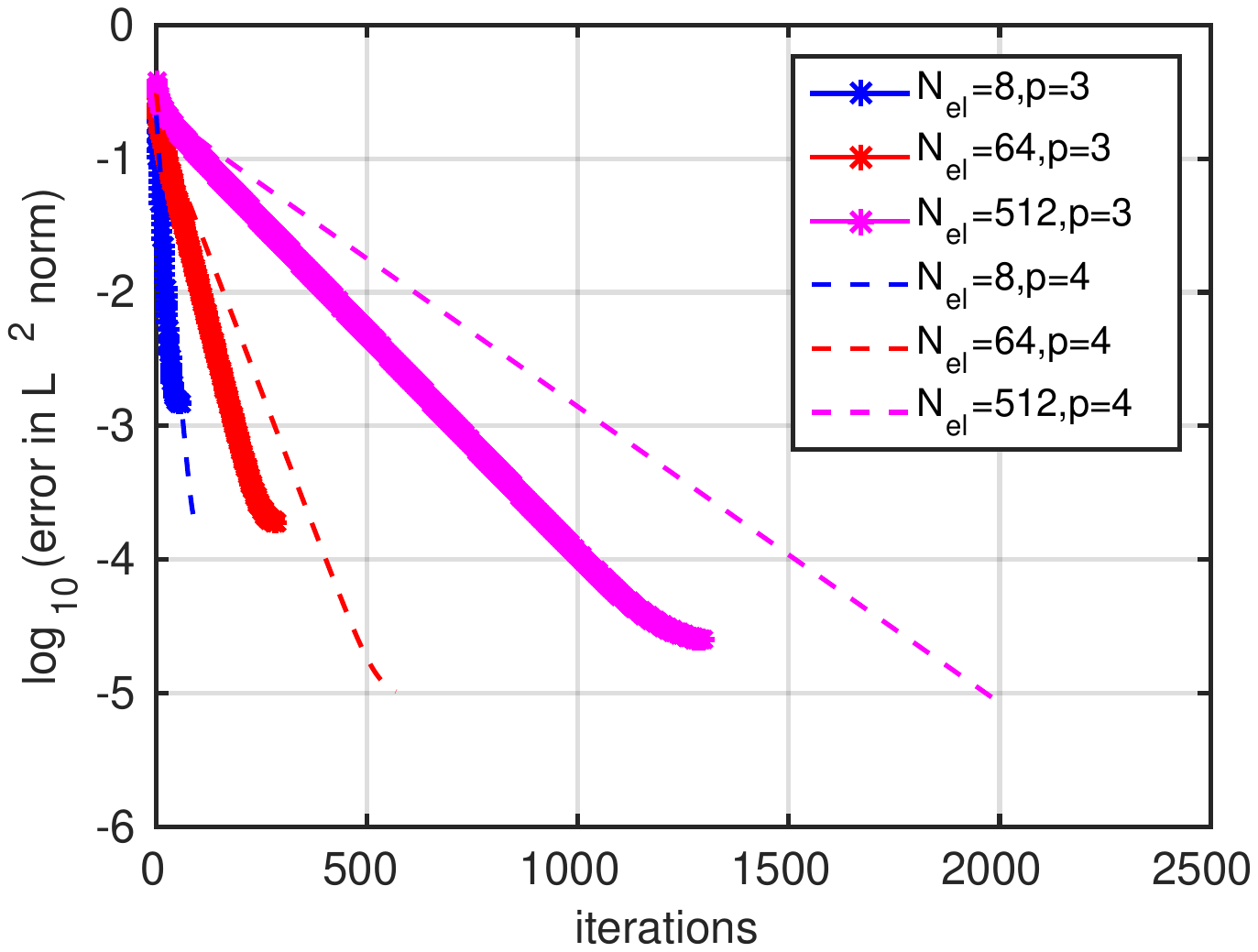}
\label{nu_1_p3p4}
}
\subfigure[$\nu=10$]{
\includegraphics[trim=0.5cm 10cm 7cm 6cm,clip=true,width=0.5\textwidth]{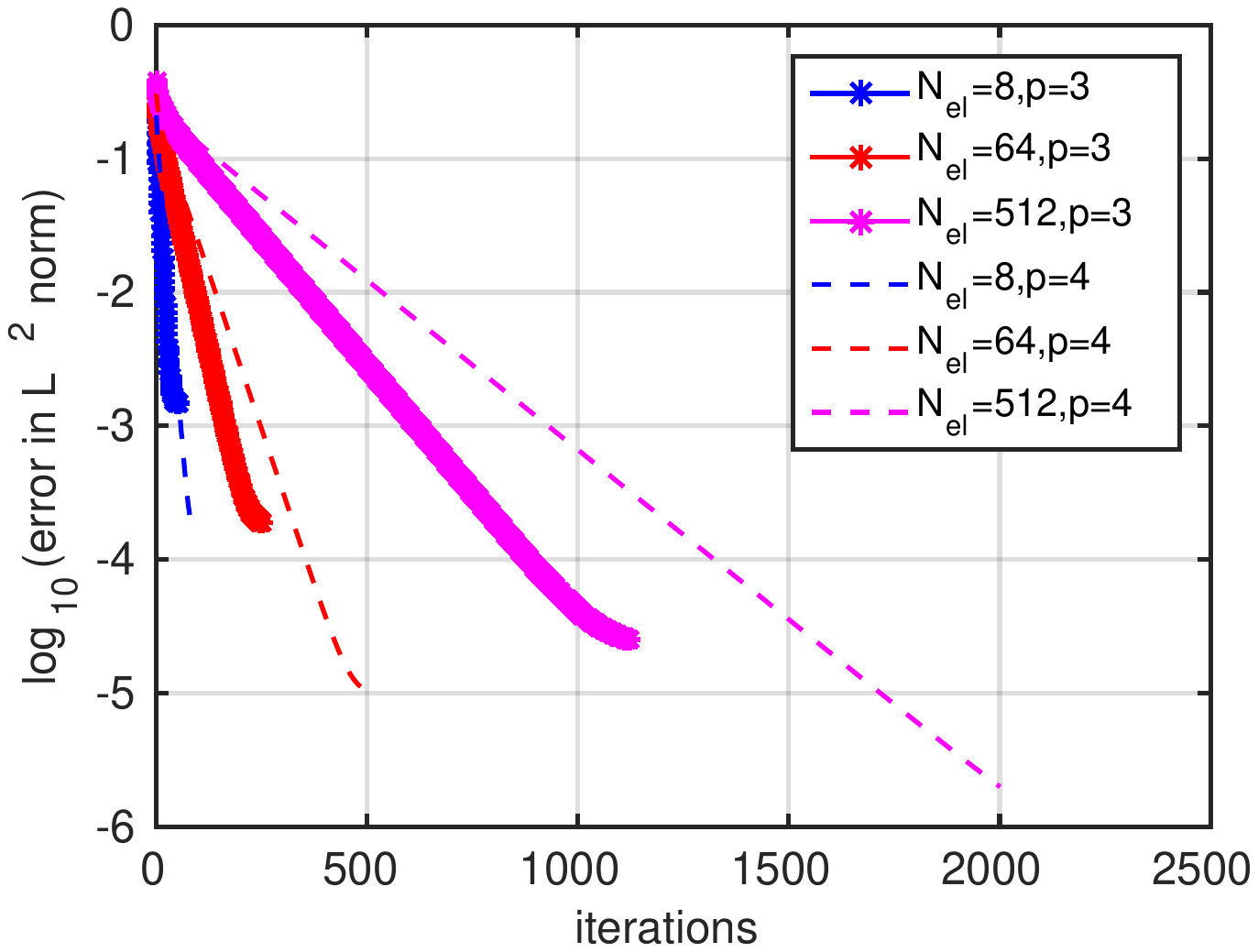}
\label{nu_10_p3p4}
}
\caption{Convergence of the iHDG algorithm for different mesh size $h$
  and solution order $p=\LRc{3,4}$ for 3D elliptic equation with
  $\nu=1$ and $\nu=10$.}
\label{nu_1_10}
\end{figure}

\begin{figure}[h!b!t!]
  \subfigure[$\sigma$ at $iteration = 1$]{
    \includegraphics[trim=4cm 0cm 3cm 0cm,clip=true,width=0.31\columnwidth]{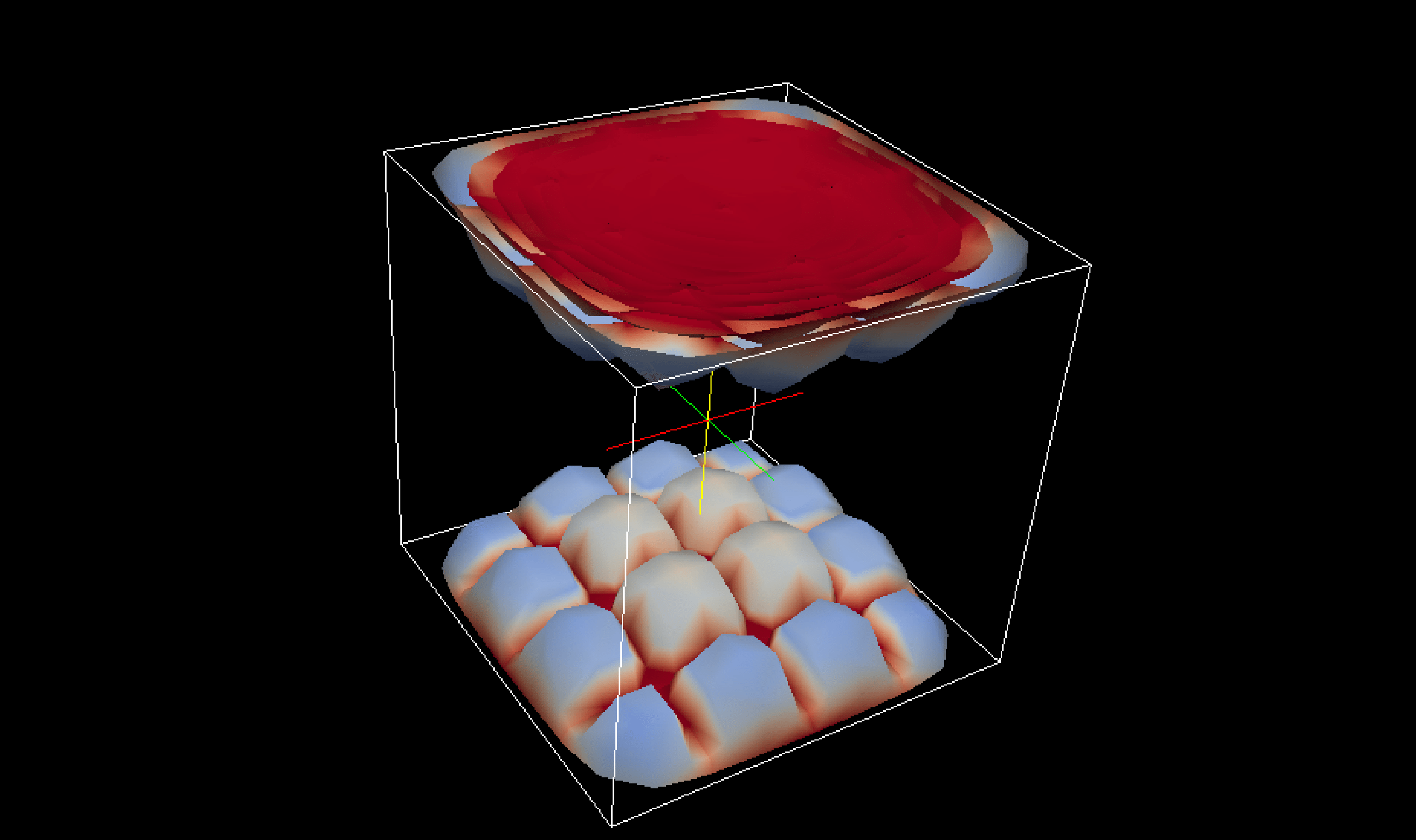}
  }
  \subfigure[$\sigma$ at $iteration = 10$]{
    \includegraphics[trim=4cm 0cm 3cm 0cm,clip=true,width=0.31\columnwidth]{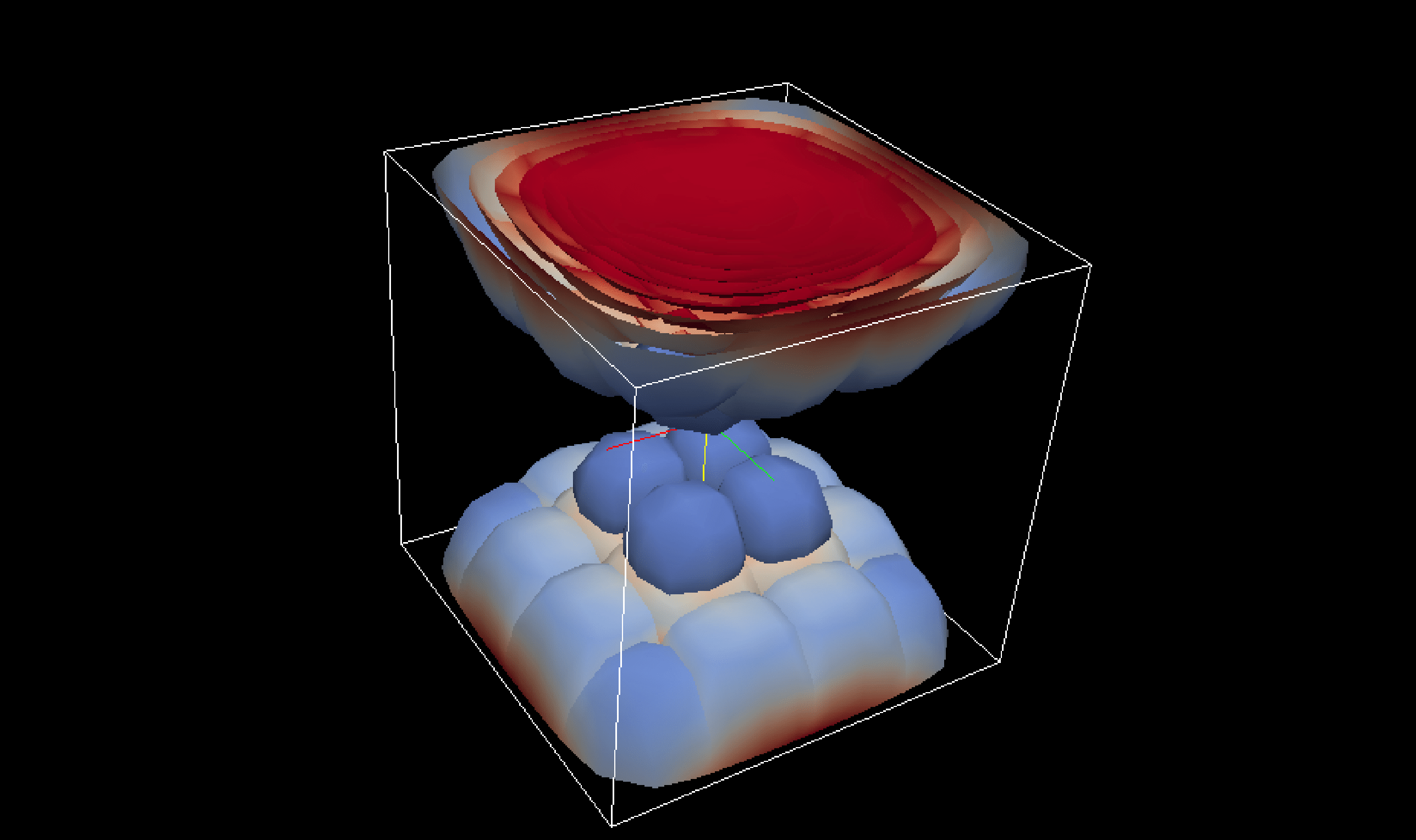}
  }
   \subfigure[$\sigma$ at $iteration = 20$]{
     \includegraphics[trim=4cm 0cm 3cm 0cm,clip=true,width=0.31\columnwidth]{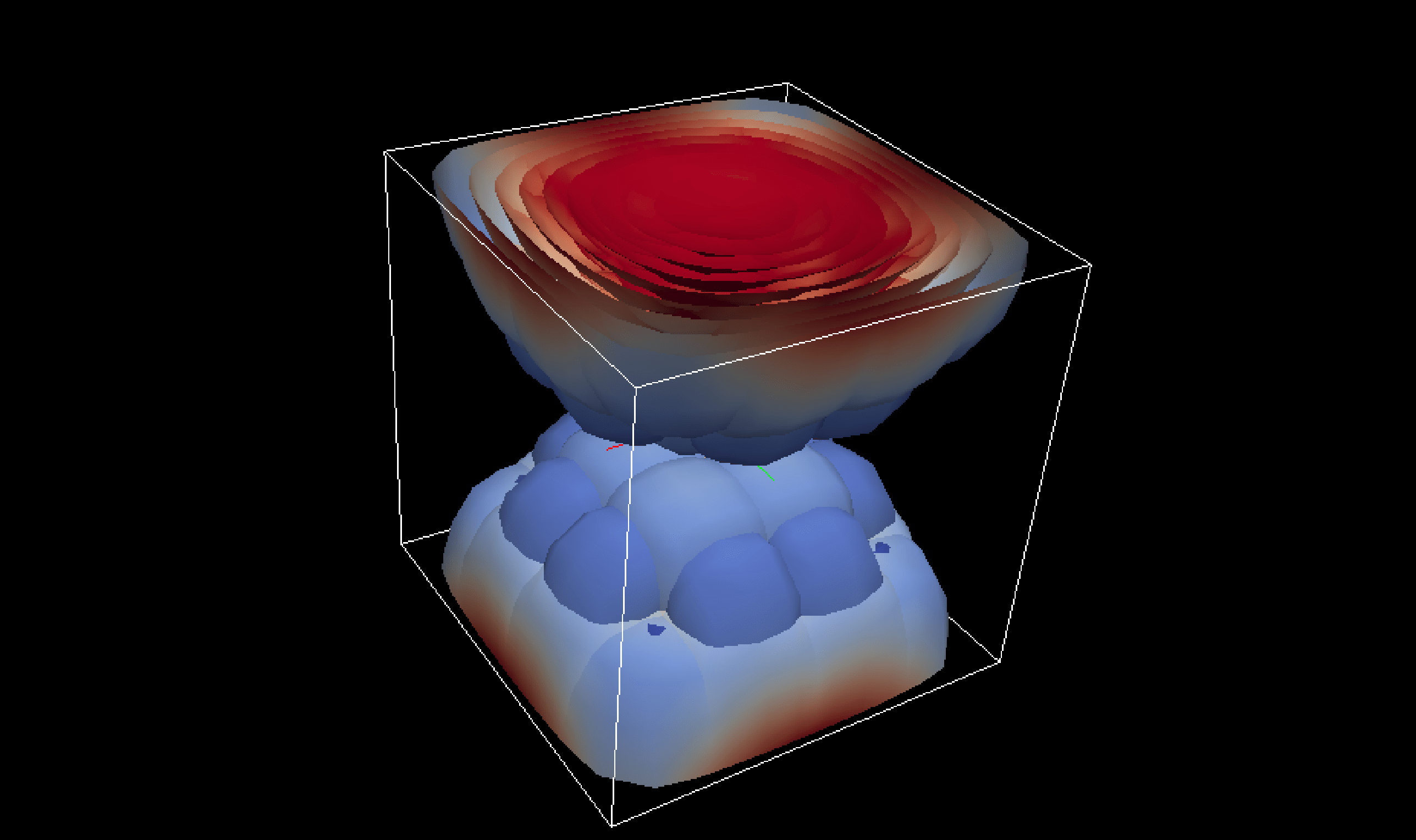}
   }
  \subfigure[$\sigma$ at $iteration = 30$]{
    \includegraphics[trim=4cm 0cm 3cm 0cm,clip=true,width=0.31\columnwidth]{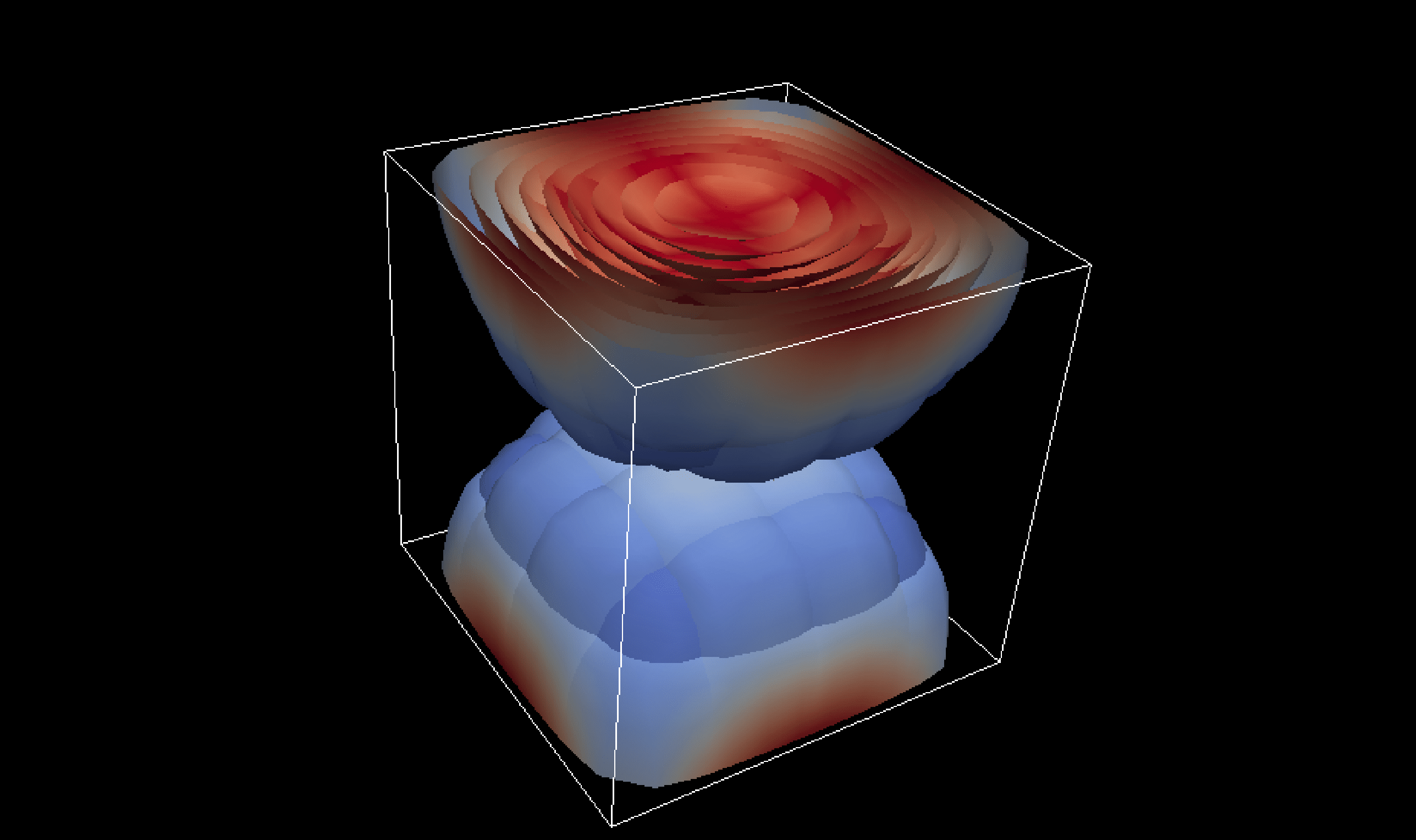}
  }
   \subfigure[$\sigma$ at $iteration = 90$]{
     \includegraphics[trim=4cm 0cm 3cm 0cm,clip=true,width=0.31\columnwidth]{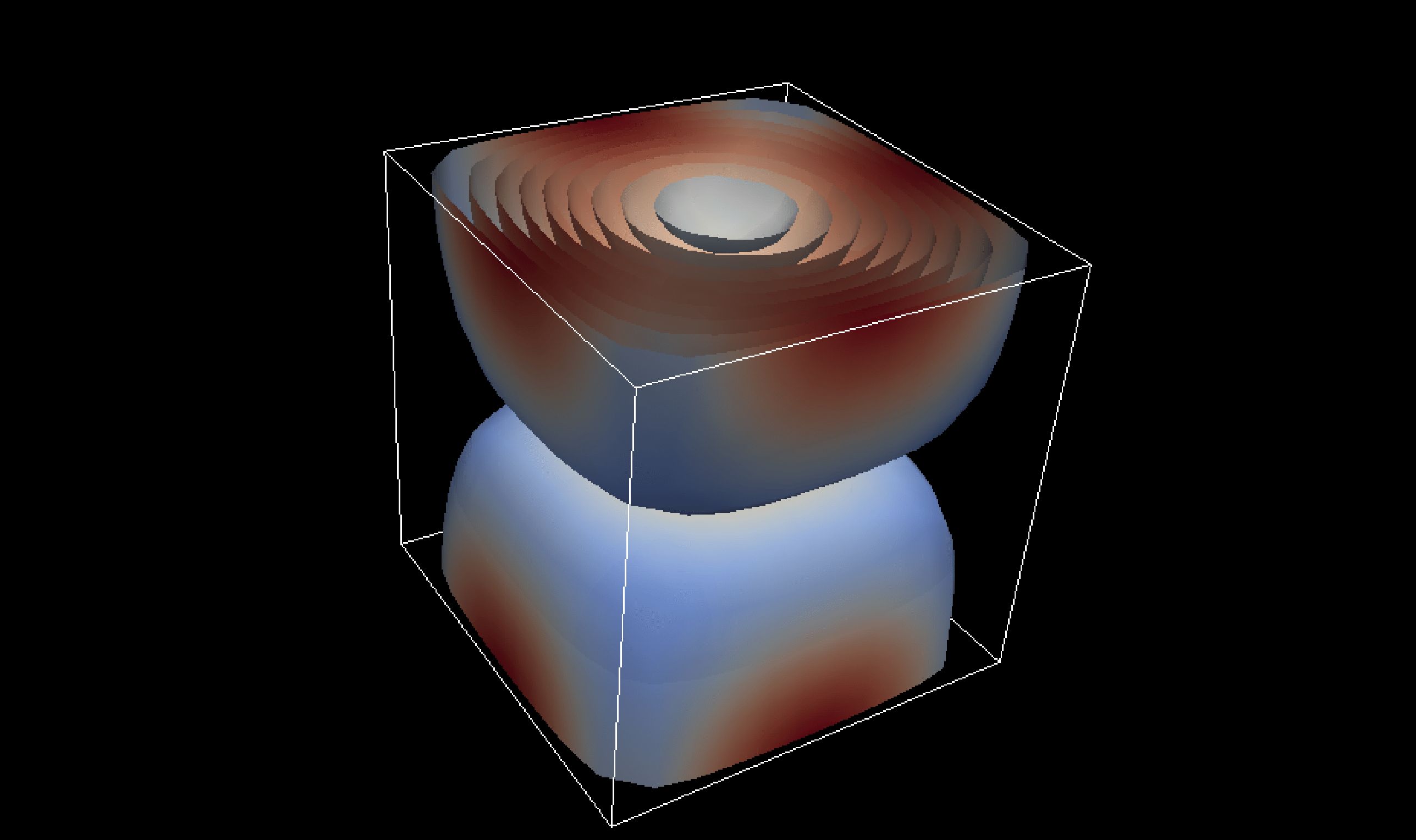}
   }
  \subfigure[$\sigma$ at $iteration = 569$]{
    \includegraphics[trim=4cm 0cm 3cm 0cm,clip=true,width=0.31\columnwidth]{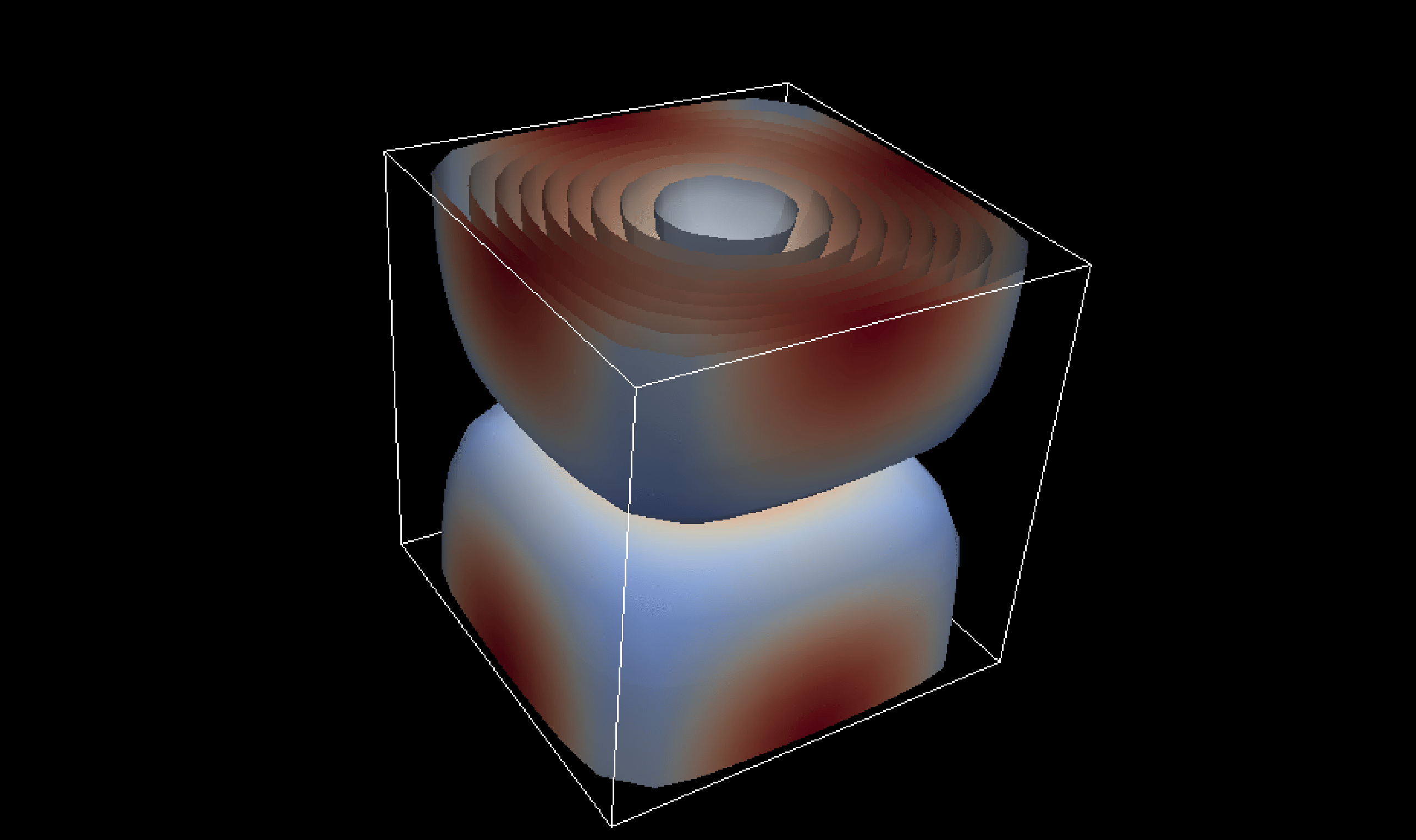}
  }
  \caption{Evolution of $\sigma$ with respect to the number of iterations for $\nu=1$.}
  \figlab{3dellipticQ}
  \vspace{-7mm}
\end{figure}

Figure \ref{nu_1_10} shows the convergence history for different
meshes and solution orders for $\nu=1$ and $\nu = 10$. We notice that the
convergence trend is different from the pure convection and
convection-diffusion cases, that is, it is exponential in the number
of iterations starting from the beginning, but the rate is less. We
also show in Figure \figref{3dellipticQ} the evolution of the magnitude of $\sigb$ ($\sigma=|\sigb|$) with
respect to the number of iterations for $N_{el}=64$ and solution
order $p=4$. Unlike the convection (or convection-dominated) case, the
convergence of the iHDG solution in this case does not have a
preferable direction as the elliptic nature of the PDEs is encoded in the iHDG algorithm via the numerical flux.

\subsection{Time dependent convection-diffusion equation}
In this section  we consider the following equation
\begin{subequations}
\eqnlab{Time-Convection-diffusion-eqn}
\begin{align}
\kappa^{-1}\sigb^e + \Grad \u^e &= 0 \quad \text{ in } \Omega, \\ 
\frac{\partial \u^e}{\partial t}+\Div\sigb^e +\betab \cdot \Grad \u^e &= 0 \quad \text{ in }
\Omega.
\end{align}
\end{subequations}
We are interested in the transport of the contaminant concentration
\cite{NguyenPeraireCockburn09a,BashirWillcoxGhattasEtAl08} with
diffusivity $\kappa=0.01$ in a three dimensional domain
$\Omega=[0,5]\times[-1.25,1.25]\times[-1.25,1.25]$ with convective
velocity field $\betab$ given as\footnote{Here, $\betab$ is an
  extension of 2D analytical solution of Kovasznay flow
  \cite{Kovasznay1948}.}
\[
	\betab=\LRp{1-e^{\gamma x}\cos(2\pi y),\frac{\gamma}{2\pi}e^{\gamma x}\sin(2\pi y),0},
\]
where $\gamma=\frac{Re}{2}-\sqrt{\frac{Re^2}{4}+4\pi^2}$ and
$Re=100$. Starting from $t = 0$, and for every second afterwards, the
same distribution of contaminant concentration of the form
\[
u_0=e^{\frac{(x-1)^2+y^2+z^2}{0.5^2}}+e^{\frac{(x-1)^2+(y-0.5)^2+z^2}{0.5^2}}+e^{\frac{(x-1)^2+(y+0.5)^2+z^2}{0.5^2}}
\]
is injected into the flow field.  A time stepsize of $\Delta t=0.025$
is selected and the simulation is run for $400$ time steps, i.e. till
$T=10$, with the Crank-Nicolson method.  Here, we use the mesh with
$N_{el}=512$ elements and solution order $p=4$.  On the left boundary,
i.e. $x=0,-1.25\leq y \leq 1.25,-1.25\leq z \leq 1.25$, the Dirichlet
boundary condition $\u=0$ is applied, while on the remaining boundary
faces, we employ the homogeneous Neumann boundary condition $\Grad \u
\cdot \n=0$. The Peclet number for this problem is $200$. This
exhibits a wide range of mixed hyperbolic and parabolic regimes and
hence is a good test bed for the proposed iHDG scheme. In this
example, the iHDG algorithm with NPC flux does not converge even for
 coarse meshes and low solution orders (this can be seen in
section \secref{Convection-diffusion-steady}). We therefore show
numerical results only for the upwind HDG flux in Figure
\figref{3dcontaminant}.  The scheme requires approximately $46$
iterations for each time step to reach the stopping criteria
$\norm{u^k-u^{k-1}}_{L^2}<10^{-6}$.  Since $\Grad\cdot\betab=0$, $\nu=0$ and
$\lambda=1/\Delta t$ (see remark \ref{time-conv-diff})  we obtain from  section
\secref{iHDG-first-order-CDR} the estimate for the time stepsize: $\Delta
t=\mc{O}\LRp{\frac{h}{(p+1)(p+2)}}$. Using this estimate we
compare the number of iterations the iHDG algorithm takes to converge for different meshes and solution orders in table \ref{tab:Comparison_contaminant_transport}. We do not obtain convergence for $N_{el}=4096$ and solution orders equal to $3$ and $4$: the main reason is that this problem is in the mixed hyperbolic and parabolic regime and the above setting does not satisfy the condition for convergence (see Section
\secref{Convection-diffusion-steady}).

\begin{figure}[h!b!t!]
  \subfigure[$\u$ at $t=0$]{
    \includegraphics[trim=6cm 6cm 7cm 6cm,clip=true,width=0.48\columnwidth]{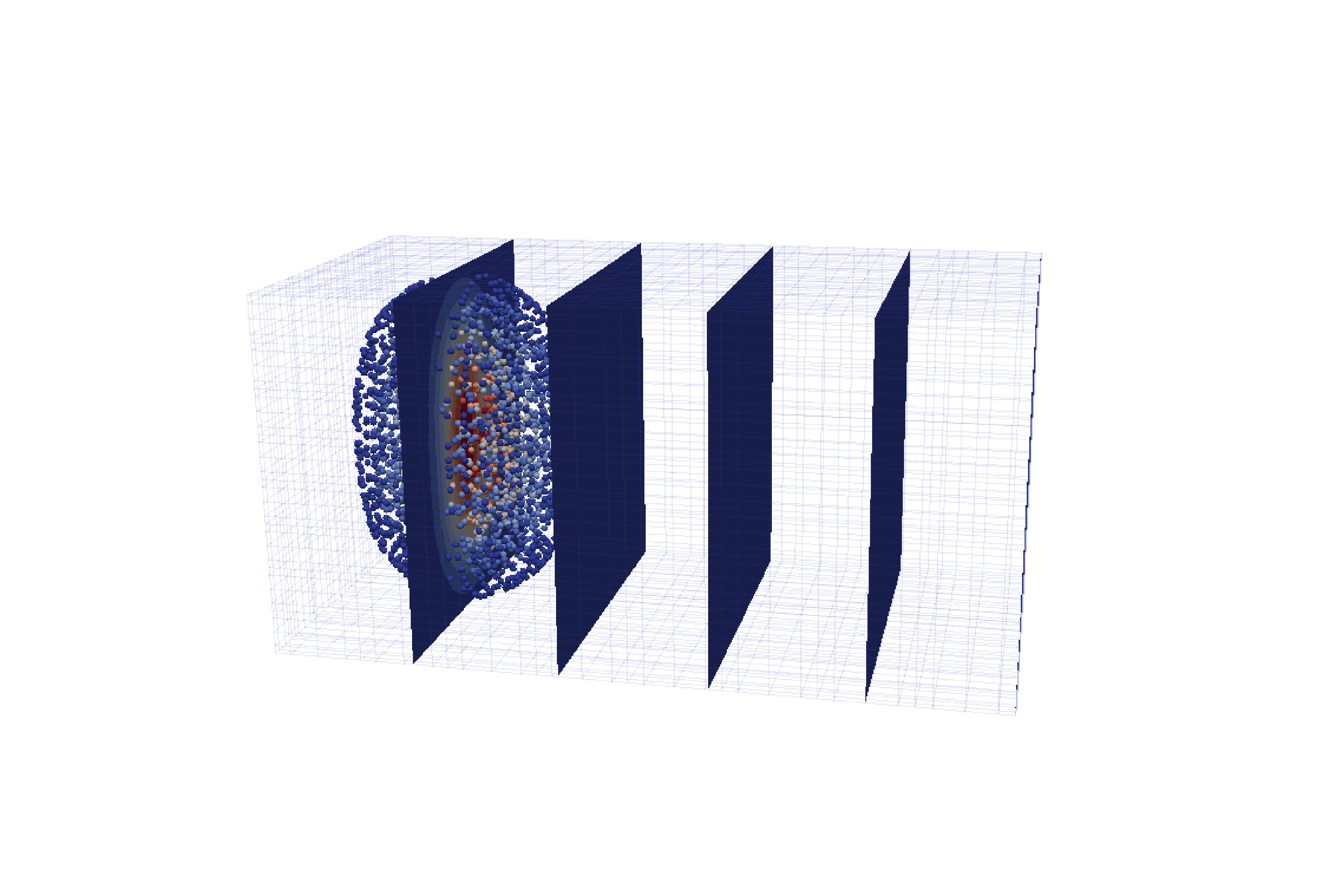}
  }
  \subfigure[$\u$ at $t=0.75$]{
    \includegraphics[trim=6cm 6cm 7cm 6cm,clip=true,width=0.48\columnwidth]{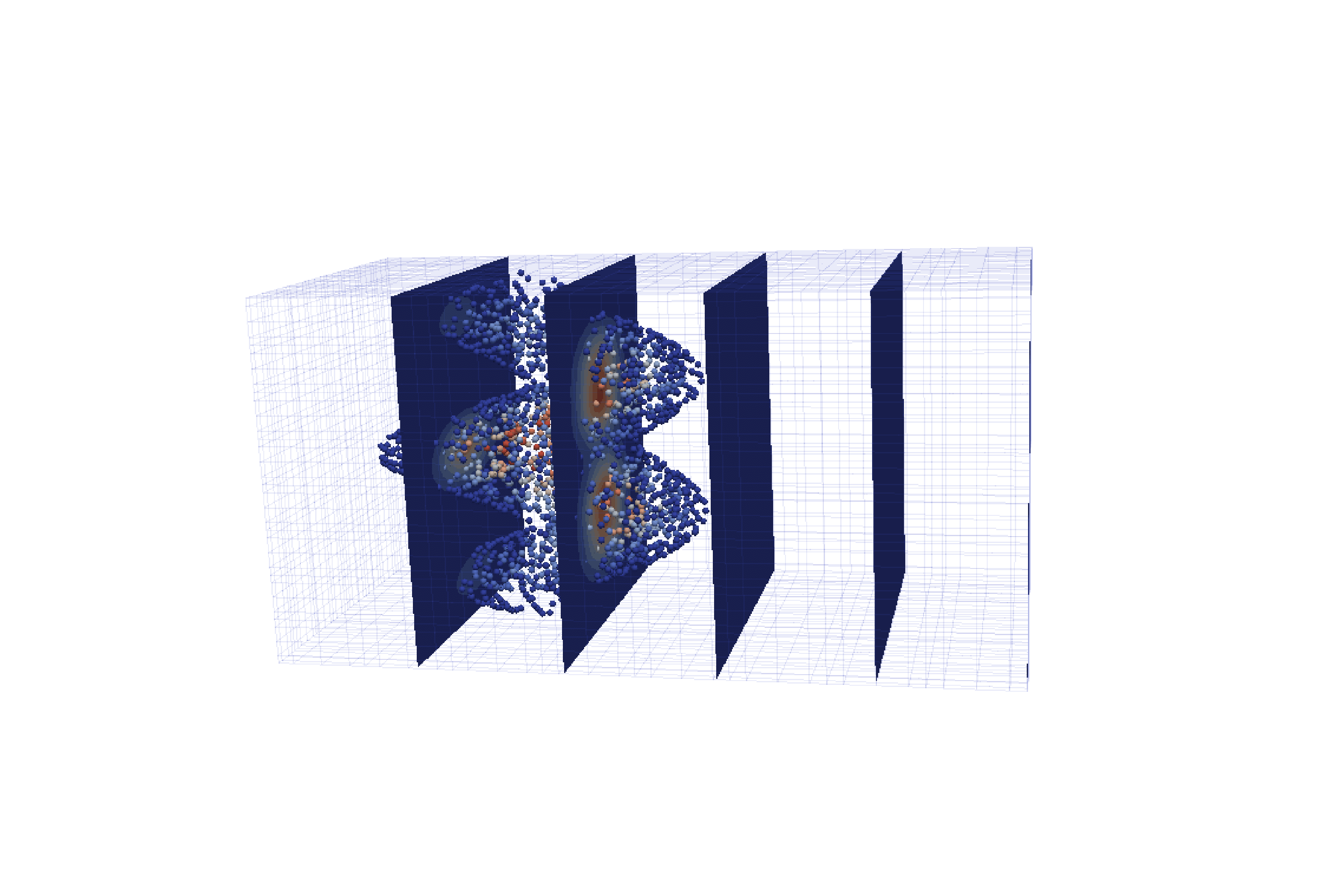}
  }
  \subfigure[$\u$ at $t=2.0$]{
    \includegraphics[trim=6cm 6cm 7cm 6cm,clip=true,width=0.48\columnwidth]{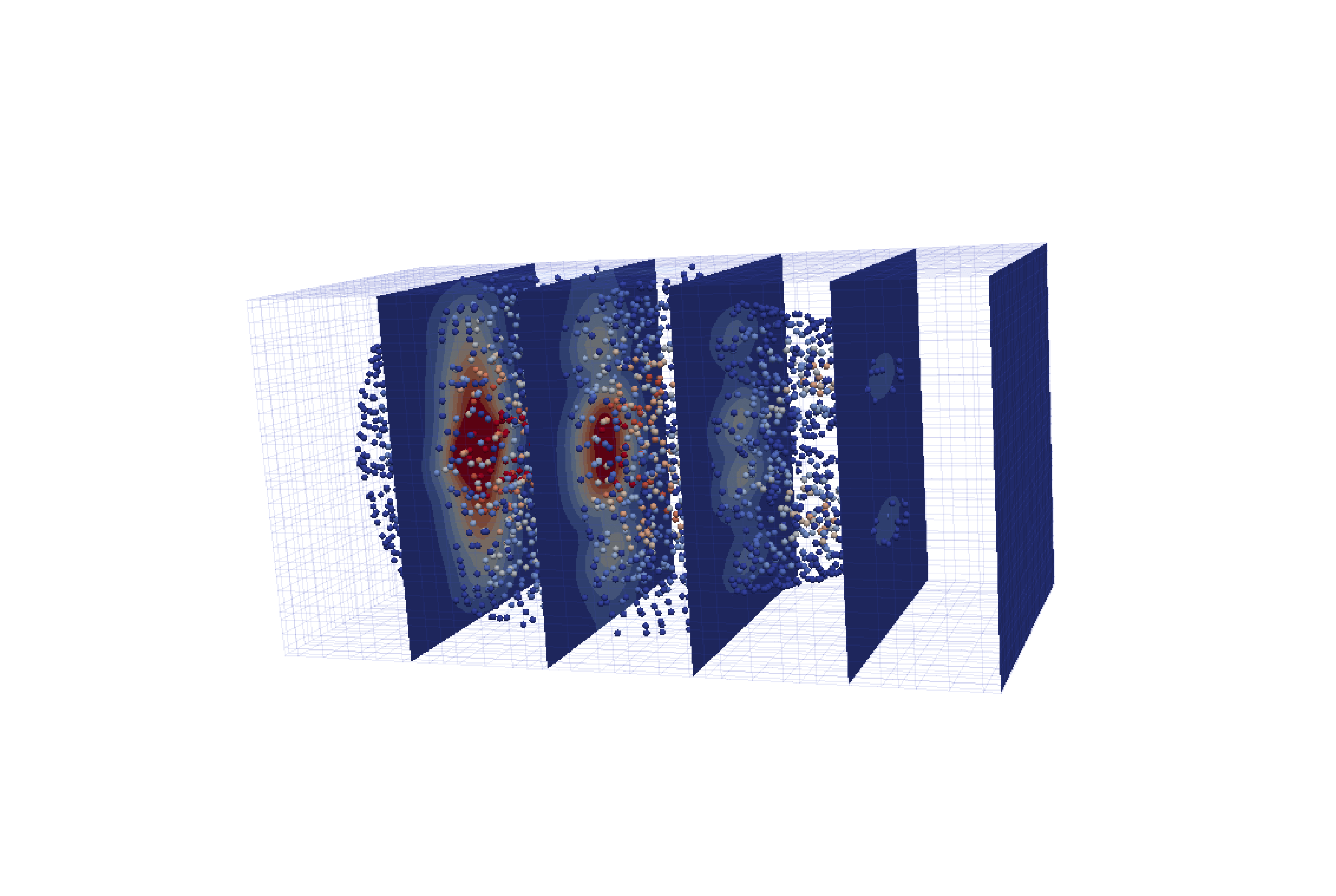}
  }
  \subfigure[$\u$ at $t=3.0$]{
    \includegraphics[trim=6cm 6cm 7cm 6cm,clip=true,width=0.48\columnwidth]{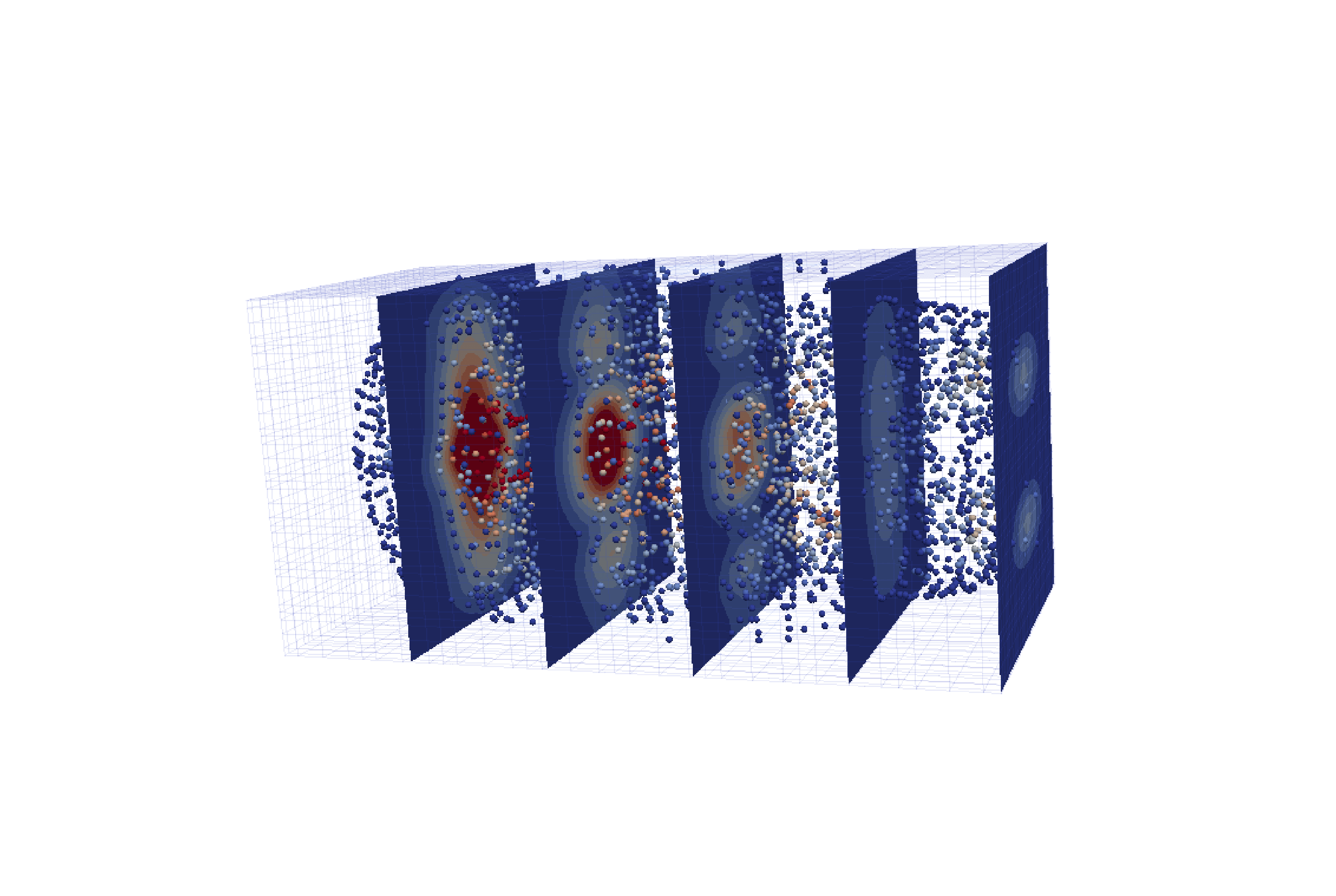}
  }
  \caption{Solution $\u$ as a function of time using the iHDG
    algorithm with the upwind flux for the contaminant transport
    problem \eqnref{Time-Convection-diffusion-eqn}.}
  \figlab{3dcontaminant}
\end{figure}

\begin{table}[h!b!t!]
\begin{center}
\caption{The number of iHDG iterations per time step for the
  contaminant transport problem 
  with various solution orders and mesh sizes.}
\label{tab:Comparison_contaminant_transport}
\begin{tabular}{ | r || c | c | c | c | }
\hline
{$N_{el}$} & {$p=1$} & $p=2$ & $p=3$ & $p=4$ \\
\hline
8 & 12 & 24 & 23 & 22 \\
\hline
64 & 26 & 26 & 22 & 20 \\ 
\hline
512 & 21 & 23 & 17 & 37 \\
\hline
4096 & 17 & 18 & * & * \\
\hline
\end{tabular}
\end{center}
\end{table}

\section{Conclusions and Future Work}
\seclab{conclusions}

We have presented an iterative solver, namely iHDG, for HDG
discretizations of linear PDEs. The method exploits the structure of
HDG discretization and idea from domain decomposition methods. One of
the key features of the iHDG algorithm is that it requires only local
solves, i.e. element-by-element and face-by-face, completely
independent of each other, during each iteration. It can also be
considered as a block Gauss-Seidel method for the augmented HDG system
with volume and weighted trace unknowns. Thanks to the built-in
stabilization via the weighted trace and the structure of the HDG discretization, unlike traditional Gauss-Seidel schemes, the convergence
of iHDG is independent of the ordering of the unknowns.  Using an energy
approach we rigorously derive the conditions under which the iHDG
algorithm is convergent for the transport equation, the linearized
shallow water equation, and the convection-diffusion equation.  In
particular, for the scalar transport equation, the algorithm is
convergent for all meshes and solution orders, and the convergence
rate is independent of solution order. This feature makes the iHDG
solver especially suitable for high-order DG methods, that is,
high-order (and hence more accurate) solutions do not require more
number of iterations. The scheme in fact performs an implicit marching
and the solution converges in patches of elements automatically from the inflow to
the outflow boundaries.  For the linearized shallow water equation, we
prove that the convergence is conditional on the mesh size and the
solution order. Similar conditional convergence is also shown for the
convection-diffusion equation in the first order form. We have studied
the performance of the scheme in convection dominated, mixed
(hyperbolic-elliptic) and diffusion regimes and numerically verified
our theoretical results.
Ongoing work is to improve the convergence of the iHDG scheme for diffusion-dominated
problems. We are also developing a two grid preconditioner for elliptic problems with iHDG as a fine scale 
preconditioner. A different avenue for the future work is to explore the opportunity to employ/study
iHDG as a scalable smoother for multigrid solvers.

\section*{Acknowledgements}
The authors would like to thank Dr. Jeonghun J. Lee for fruitful discussions. The research was initiated when M.-B. Tran visited The University of Texas at Austin, and he would like to thank the institution for the hospitality. We are indebted to Dr. Hari Sundar for sharing his high-order
finite element library {\bf homg}, on which we have implemented the
iHDG algorithms and produced numerical results. We also thank Dr. Vishwas Rao for careful proofreading of the paper.
Finally we are indebted to anonymous referees for their critical and useful comments that improved this paper substantially.

\bibliography{references,ceo}

\begin{thebibliography}{10}

\bibitem{atkins2005numerical}
{\sc Harold Atkins and Brian Helenbrook}, {\em Numerical evaluation of
  p-multigrid method for the solution of discontinuous galerkin discretizations
  of diffusive equations}, in 17th AIAA Computational Fluid Dynamics
  Conference, 2005, p.~5110.

\bibitem{bank1981comparison}
{\sc Randolph~E Bank}, {\em A comparison of two multilevel iterative methods
  for nonsymmetric and indefinite elliptic finite element equations}, SIAM
  Journal on Numerical Analysis, 18 (1981), pp.~724--743.

\bibitem{BashirWillcoxGhattasEtAl08}
{\sc O.~Bashir, K.~Willcox, O.~Ghattas, B.~van Bloemen~Waanders, and J.~Hill},
  {\em Hessian-based model reduction for large-scale systems with initial
  condition inputs}, International Journal for Numerical Methods in
  Engineering, 73 (2008), pp.~844--868.

\bibitem{BennequinGanderHalpern:AHB:2009}
{\sc D.~Bennequin, M.~J. Gander, and L.~Halpern}, {\em A homographic best
  approximation problem with application to optimized {S}chwarz waveform
  relaxation}, Math. Comp., 78 (2009), pp.~185--223.

\bibitem{Bui-Thanh15}
{\sc T.~Bui-Thanh}, {\em From {G}odunov to a unified hybridized discontinuous
  {G}alerkin framework for partial differential equations}, Journal of
  Computational Physics, 295 (2015), pp.~114--146.

\bibitem{Bui-Thanh15a}
\leavevmode\vrule height 2pt depth -1.6pt width 23pt, {\em From
  {R}ankine-{H}ugoniot Condition to a Constructive Derivation of {HDG}
  Methods}, Lecture Notes in Computational Sciences and Engineering, Springer,
  2015, pp.~483--491.

\bibitem{bui2016construction}
{\sc Tan Bui-Thanh}, {\em Construction and analysis of {HDG} methods for
  linearized shallow water equations}, SIAM Journal on Scientific Computing, 38
  (2016), pp.~A3696--A3719.

\bibitem{chan2016gpu}
{\sc Jesse Chan, Zheng Wang, Axel Modave, Jean-Francois Remacle, and Tim
  Warburton}, {\em Gpu-accelerated discontinuous galerkin methods on hybrid
  meshes}, Journal of Computational Physics, 318 (2016), pp.~142--168.

\bibitem{CockburnDongGuzmanEtAl09}
{\sc B.~Cockburn, B.~Dong, J.~Guzman, M.~Restelli, and R.~Sacco}, {\em A
  hybridizable discontinuous {G}alerkin method for steady state
  convection-diffusion-reaction problems}, SIAM J. Sci. Comput., 31 (2009),
  pp.~3827--3846.

\bibitem{CockburnGopalakrishnan09}
{\sc B.~Cockburn and J.~Gopalakrishnan}, {\em The derivation of hybridizable
  discontinuous {G}alerkin methods for {S}tokes flow}, SIAM J. Numer. Anal, 47
  (2009), pp.~1092--1125.

\bibitem{CockburnGopalakrishnanLazarov09}
{\sc Bernardo Cockburn, Jay Gopalakrishnan, and Raytcho Lazarov}, {\em Unified
  hybridization of discontinuous {G}alerkin, mixed, and continuous {G}alerkin
  methods for second order elliptic problems}, SIAM J. Numer. Anal., 47 (2009),
  pp.~1319--1365.

\bibitem{CockburnGopalakrishnanSayas10}
{\sc Bernardo Cockburn, Jay Gopalakrishnan, and Francisco-Javier Sayas}, {\em A
  projection-based error analysis of {HDG} methods}, Mathematics Of
  Computation, 79 (2010), pp.~1351--1367.

\bibitem{CockburnKarniadakisShu00}
{\sc Bernardo Cockburn, George~E. Karniadakis, and Chi-Wang Shu}, {\em
  Discontinuous {G}alerkin Methods: Theory, Computation and Applications},
  Lecture Notes in Computational Science and Engineering, Vol.~11, Springer
  Verlag, Berlin, Heidelberg, New York, 2000.

\bibitem{CuiZhang14}
{\sc J.~Cui and W.~Zhang}, {\em An analysis of {HDG} methods for the
  {H}elmholtz equation}, IMA J. Numer. Anal., 34 (2014), pp.~279--295.

\bibitem{EggerSchoberl10}
{\sc H.~Egger and J.~Schoberl}, {\em A hybrid mixed discontinuous {G}alerkin
  finite element method for convection-diffusion problems}, IMA Journal of
  Numerical Analysis, 30 (2010), pp.~1206--1234.

\bibitem{fidkowski2014algebraic}
{\sc Krzysztof Fidkowski}, {\em Algebraic tailoring of discontinuous galerkin
  p-multigrid for convection}, Computers \& Fluids, 98 (2014), pp.~164--176.

\bibitem{Gander:2011:OTI}
{\sc M.~J. Gander}, {\em On the influence of geometry on optimized {S}chwarz
  methods}, S$\vec{\rm e}$MA J.,  (2011), pp.~71--78.

\bibitem{GanderGouarinHalpern:2011:OSW}
{\sc Martin~J. Gander, Lo{\"{\i}}c Gouarin, and Laurence Halpern}, {\em
  Optimized {S}chwarz waveform relaxation methods: a large scale numerical
  study}, in Domain decomposition methods in science and engineering {XIX},
  vol.~78 of Lect. Notes Comput. Sci. Eng., Springer, Heidelberg, 2011,
  pp.~261--268.

\bibitem{GanderHajian:2015:ASM}
{\sc Martin~J. Gander and Soheil Hajian}, {\em Analysis of {S}chwarz methods
  for a hybridizable discontinuous {G}alerkin discretization}, SIAM J. Numer.
  Anal., 53 (2015), pp.~573--597.

\bibitem{GanderHalpern:2013:NSR}
{\sc Martin~J. Gander, Laurence Halpern, and K{\'e}vin Santugini~Repiquet},
  {\em Non shape regular domain decompositions: an analysis using a stable
  decomposition in {$H^1_0$}}, in Domain decomposition methods in science and
  engineering {XX}, vol.~91 of Lect. Notes Comput. Sci. Eng., Springer,
  Heidelberg, 2013, pp.~485--492.

\bibitem{GanderKwok:2012:BRP}
{\sc Martin~J. Gander and Felix Kwok}, {\em Best {R}obin parameters for
  optimized {S}chwarz methods at cross points}, SIAM J. Sci. Comput., 34
  (2012), pp.~A1849--A1879.

\bibitem{GiraldoWarburton08}
{\sc F.~X. Giraldo and T.~Warburton}, {\em A high-order triangular discontinous
  {G}alerkin oceanic shallow water model}, International Journal For Numerical
  Methods In Fluids, 56 (2008), pp.~899--925.

\bibitem{Greenbaum97}
{\sc A.~Greenbaum}, {\em Iterative Methods for the Solution of Linear Systems},
  SIAM, Philadelphia, 1997.

\bibitem{GriesmaierMonk11}
{\sc R.~Griesmaier and P.~Monk}, {\em Error analysis for a hybridizable
  discontinous {G}alerkin method for the {H}elmholtz equation}, J. Sci.
  Comput., 49 (2011), pp.~291--310.

\bibitem{hackbusch1997downwind}
{\sc Wolfgang Hackbusch and Thomas Probst}, {\em Downwind gauss-seidel
  smoothing for convection dominated problems}, Numerical linear algebra with
  applications, 4 (1997), pp.~85--102.

\bibitem{Halpern:OSM:2009}
{\sc Laurence Halpern}, {\em Optimized {S}chwarz waveform relaxation: roots,
  blossoms and fruits}, in Domain decomposition methods in science and
  engineering {XVIII}, vol.~70 of Lect. Notes Comput. Sci. Eng., Springer,
  Berlin, 2009, pp.~225--232.

\bibitem{HalpernSzeftel:2009:NOS}
{\sc Laurence Halpern and J{\'e}r{\'e}mie Szeftel}, {\em Nonlinear
  nonoverlapping {S}chwarz waveform relaxation for semilinear wave propgation},
  Math. Comp., 78 (2009), pp.~865--889.

\bibitem{MR2511736}
{\sc Paul Houston, Dominik Sch{\"o}tzau, and Xiaoxi Wei}, {\em A mixed {DG}
  method for linearized incompressible magnetohydrodynamics}, J. Sci. Comput.,
  40 (2009), pp.~281--314.

\bibitem{JohnsonPitkaranta86}
{\sc C.~Johnson and J.~Pitk{\"a}ranta}, {\em An analysis of the discontinuous
  {G}alerkin method for a scalar hyperbolic equation}, Mathematics of
  Computation, 46 (1986), pp.~1--26.

\bibitem{kanschat2008robust}
{\sc Guido Kanschat}, {\em Robust smoothers for high-order discontinuous
  galerkin discretizations of advection--diffusion problems}, Journal of
  Computational and Applied Mathematics, 218 (2008), pp.~53--60.

\bibitem{kim2004uniformly}
{\sc Hwanho Kim, Jinchao Xu, and Ludmil Zikatanov}, {\em Uniformly convergent
  multigrid methods for convection--diffusion problems without any constraint
  on coarse grids}, Advances in Computational Mathematics, 20 (2004),
  pp.~385--399.

\bibitem{KirbySherwinCockburn12}
{\sc R.~M. Kirby, S.~J. Sherwin, and B.~Cockburn}, {\em To {CG} or to {HDG}: A
  comparative study}, J. Sci. Comput., 51 (2012), pp.~183--212.

\bibitem{Kovasznay1948}
{\sc L.~I.~G. Kovasznay}, {\em Laminar flow behind a two-dimensional grid},
  Mathematical Proceedings of the Cambridge Philosophical Society, 44 (1948),
  pp.~58--62.

\bibitem{LeSaintRaviart74}
{\sc P.~LeSaint and P.~A. Raviart}, {\em On a finite element method for solving
  the neutron transport equation}, in Mathematical Aspects of Finite Element
  Methods in Partial Differential Equations, C.~de~Boor, ed., Academic Press,
  1974, pp.~89--145.

\bibitem{LiLanteriPerrrussel13}
{\sc L.~Li, S.~Lanteri, and R.~Perrrussel}, {\em A hybridizable discontinuous
  {G}alerkin method for solving {3D} time harmonic {Maxwell's} equations}, in
  Numerical Mathematics and Advanced Applications 2011, Springer, 2013,
  pp.~119--128.

\bibitem{Lions:1987:OSA}
{\sc P.-L. Lions}, {\em On the {S}chwarz alternating method. {I}}, in First
  {I}nternational {S}ymposium on {D}omain {D}ecomposition {M}ethods for
  {P}artial {D}ifferential {E}quations ({P}aris, 1987), SIAM, Philadelphia, PA,
  1988, pp.~1--42.

\bibitem{Lions:1989:OSA}
\leavevmode\vrule height 2pt depth -1.6pt width 23pt, {\em On the {S}chwarz
  alternating method. {II}. {S}tochastic interpretation and order properties},
  in Domain decomposition methods ({L}os {A}ngeles, {CA}, 1988), SIAM,
  Philadelphia, PA, 1989, pp.~47--70.

\bibitem{Lions:1990:OSA}
\leavevmode\vrule height 2pt depth -1.6pt width 23pt, {\em On the {S}chwarz
  alternating method. {III}.\ {A} variant for nonoverlapping subdomains}, in
  Third {I}nternational {S}ymposium on {D}omain {D}ecomposition {M}ethods for
  {P}artial {D}ifferential {E}quations ({H}ouston, {TX}, 1989), SIAM,
  Philadelphia, PA, 1990, pp.~202--223.

\bibitem{MoroNguyenPeraire11}
{\sc D.~Moro, N.~C. Nguyen, and J.~Peraire}, {\em Navier-{S}tokes solution
  using hybridizable discontinuous {G}alerkin methods}, American Institute of
  Aeronautics and Astronautics, 2011-3407 (2011).

\bibitem{NguyenPeraireCockburn09a}
{\sc N.~C. Nguyen, J.~Peraire, and B.~Cockburn}, {\em An implicit high-order
  hybridizable discontinuous {G}alerkin method for linear convection-diffusion
  equations}, Journal Computational Physics, 228 (2009), pp.~3232--3254.

\bibitem{NguyenPeraireCockburn10}
\leavevmode\vrule height 2pt depth -1.6pt width 23pt, {\em A hybridizable
  discontinous {G}alerkin method for {S}tokes flow}, Comput Method Appl. Mech.
  Eng., 199 (2010), pp.~582--597.

\bibitem{NguyenPeraireCockburn11a}
\leavevmode\vrule height 2pt depth -1.6pt width 23pt, {\em High-order implicit
  hybridizable discontinuous {G}alerkin method for acoustics and
  elastodynamics}, Journal Computational Physics, 230 (2011), pp.~3695--3718.

\bibitem{NguyenPeraireCockburn11b}
\leavevmode\vrule height 2pt depth -1.6pt width 23pt, {\em Hybridizable
  discontinuous {G}alerkin method for the time harmonic {M}axwell's equations},
  Journal Computational Physics, 230 (2011), pp.~7151--7175.

\bibitem{NguyenPeraireCockburn11}
\leavevmode\vrule height 2pt depth -1.6pt width 23pt, {\em An implicit
  high-order hybridizable discontinuous {G}alerkin method for the
  incompressible {N}avier-{S}tokes equations}, Journal Computational Physics,
  230 (2011), pp.~1147--1170.

\bibitem{ReedHill73}
{\sc W.~H. Reed and T.~R. Hill}, {\em Triangular mesh methods for the neutron
  transport equation}, Tech. Report LA-UR-73-479, Los Alamos Scientific
  Laboratory, 1973.

\bibitem{Saad03}
{\sc Yousef Saad}, {\em Iterative methods for sparse linear systems}, Society
  for Industrial and Applied Mathematics, Philadelphia, PA, second~ed., 2003.

\bibitem{Binh1}
{\sc Minh-Binh Tran}, {\em Parallel {S}chwarz waveform relaxation method for a
  semilinear heat equation in a cylindrical domain}, C. R. Math. Acad. Sci.
  Paris, 348 (2010), pp.~795--799.

\bibitem{Binh2}
\leavevmode\vrule height 2pt depth -1.6pt width 23pt, {\em Parallel {S}chwarz
  waveform relaxation algorithm for an {$N$}-dimensional semilinear heat
  equation}, ESAIM Math. Model. Numer. Anal., 48 (2014), pp.~795--813.

\bibitem{wang1999crosswind}
{\sc Feng Wang and Jinchao Xu}, {\em A crosswind block iterative method for
  convection-dominated problems}, SIAM Journal on Scientific Computing, 21
  (1999), pp.~620--645.

\bibitem{MR1986022}
{\sc T.~Warburton and J.~S. Hesthaven}, {\em On the constants in {$hp$}-finite
  element trace inverse inequalities}, Comput. Methods Appl. Mech. Engrg., 192
  (2003), pp.~2765--2773.

\end{thebibliography}

\end{document}